\newcommand{\cL}{\mathcal{L}}
\newcommand{\hZ}{\hat{Z}}
\newcommand{\fS}{\mathfrak{S}}
\newcommand{\fP}{\mathfrak{P}}
\newcommand{\Tr}{\mathrm{Tr}}
\title{Proof of the Labastida-Mari\~{n}o-Ooguri-Vafa Conjecture}
\author{Kefeng Liu and Pan Peng}
\date{}
\newtheorem*{lmovconj}{Conjecture (LMOV)}
\DeclareMathOperator{\lk}{lk}
\newcommand{\chR}{\check{\mathcal{R}}}
\newcommand{\fsl}{\mathfrak{sl}}
\newcommand{\fu}{\mathfrak{u}}
\newcommand{\Mbar}{\overline{M}}
\def\mathcenter#1{%
  \vcenter{\hbox{#1}}%
}
\newdimen\tableauside\tableauside=1.0ex
\newdimen\tableaurule\tableaurule=0.4pt
\newdimen\tableaustep
\def\phantomhrule#1{\hbox{\vbox to0pt{\hrule height\tableaurule width#1\vss}}}
\def\phantomvrule#1{\vbox{\hbox to0pt{\vrule width\tableaurule height#1\hss}}}
\def\sqr{\vbox{%
  \phantomhrule\tableaustep
  \hbox{\phantomvrule\tableaustep\kern\tableaustep\phantomvrule\tableaustep}%
  \hbox{\vbox{\phantomhrule\tableauside}\kern-\tableaurule}}}
\def\squares#1{\hbox{\count0=#1\noindent\loop\sqr
  \advance\count0 by-1 \ifnum\count0>0\repeat}}
\def\partition#1{\vcenter{\offinterlineskip
  \tableaustep=\tableauside\advance\tableaustep by-\tableaurule
  \kern\normallineskip\hbox
    {\kern\normallineskip\vbox
      {\gettableau#1 0 }%
     \kern\normallineskip\kern\tableaurule}%
  \kern\normallineskip\kern\tableaurule}}
\def\gettableau#1 {\ifnum#1=0\let\next=\null\else
  \squares{#1}\let\next=\gettableau\fi\next}
\begin{document}
\maketitle

\begin{abstract}
Based on large $N$ Chern-Simons/topological string duality, in a
series of papers \cite{OV,LMV,LM}, J.M.F.~Labastida, M.~Mari\~{n}o,
H.~Ooguri and C.~Vafa conjectured certain remarkable new algebraic
structure of link invariants and the existence of infinite series of
new integer invariants. In this paper, we provide a proof of this
conjecture. Moreover, we also show these new integer invariants
vanish at large genera.
\end{abstract}

\tableofcontents

\section{Introduction}

\subsection{Overview}

For decades, we have witnessed the great development of string theory and its
powerful impact on the development of mathematics. There have been a lot of
marvelous results revealed by string theory, which deeply relate different
aspects of mathematics. All these mysterious relations are connected by a core
idea in string theory called ``duality". It was found that string theory on
Calabi-Yau manifolds provided new insight in geometry of these spaces. The
existence of a topological sector of string theory leads to a simplified model
in string theory, the topological string theory.

A major problem in topological string theory is how to compute Gromov-Witten
invariants. There are two major methods widely used: mirror symmetry in
physics and localization in mathematics. Both methods are effective
when genus is low while having trouble in dealing with higher genera due to
the rapidly growing complexity during computation. However, when the
target manifold is Calabi-Yau threefold, large $N$ Chern-Simons/topological
string duality opens a new gate to a complete solution of computing
Gromov-Witten invariants at all genera.

The study of large $N$ Chern-Simons/topological string duality was originated in
physics by an idea that gauge theory should have a string theory explanation. In
1992, Witten \cite{W2} related topological string theory of $T^\ast M$ of a
three dimensional manifold $M$ to Chern-Simons gauge theory on $M$. In 1998,
Gopakumar and Vafa \cite{GV} conjectured that, at large $N$, open topological
A-model of $N$ D-branes on $T^{\ast }S^{3}$ is dual to closed topological string
theory on resolved conifold $\mathcal{O}(-1)\oplus\mathcal{O}(-1)\rightarrow
\mathbb{P}^1$. Later, Ooguri and Vafa \cite{OV} showed a picture on how to
describe Chern-Simons invariants of a knot by open topological string theory on
resolved conifold paired with lagrangian associated with the knot.

Though large $N$ Chern-Simons/topological string duality still
remains open, there have been a lot of progress in this direction
demonstrating the power of this idea. Even for the simplest knot,
the unknot, Mari\~{n}o-Vafa formula \cite{MV, LLZ1} gives a
beautiful closed formula for Hodge integral up to three Chern
classes of Hodge bundle. Furthermore, using topological vertex
theory \cite{AKMV, LLZ2, LLLZ}, one is able to compute Gromov-Witten
invariants of any toric Calabi-Yau threefold by reducing the
computation to a gluing algorithm of topological vertex. This thus
leads to a closed formula of topological string partition function,
a generating function of Gromov-Witten invariants, in all genera for
any toric Calabi-Yau threefolds.

On the other hand, after Jones' famous work on polynomial knot
invariants, there had been a series of polynomial invariants
discovered (for example, \cite{J, HOMFLY, K}), the generalization of
which was provided by quantum group theory \cite{T} in mathematics
and by Chern-Simons path integral with the gauge group $SU(N)$
\cite{W1} in physics.

Based on the large $N$ Chern-Simons/topological string duality, Ooguri and Vafa
\cite{OV} reformulated knot invariants in terms of new integral invariants
capturing the spectrum of M2 branes ending on M5 branes embedded in the resolved
conifold. Later, Labastida, Mari\~{n}o and Vafa \cite{LMV,LM} refined the
analysis of \cite{OV} and conjectured the precise integrality structure for open
Gromov-Witten invariants. This conjecture predicts a remarkable new algebraic
structure for the generating series of general link invariants and the
integrality of infinite family of new topological invariants. In string theory,
this is a striking example that two important physical theories, topological
string theory and Chern-Simons theory, exactly agree up to all orders. In
mathematics this conjecture has interesting applications in understanding the
basic structure of link invariants and three manifold invariants, as well as the
integrality structure of open Gromov-Witten invariants. Recently, X.S. Lin and
H. Zheng \cite{LZ} verified LMOV conjecture in several lower degree cases for
some torus links.

In this paper, we give a complete proof of Labastida-Mari\~{n}o-Ooguri-Vafa
conjecture for any link (We will briefly call it LMOV conjecture).
First, let us describe the conjecture and the main ideas of the proof. The
details can be found in Sections \ref{sec: existence} and
\ref{sec: integrality}.

\subsection{Labastida-Mari\~{n}o-Ooguri-Vafa conjecture}
\label{subsec: LMOV conjecture}

Let $\mathcal{L}$ be a link with $L$ components and $\mathcal{P}$ be the set of
all partitions. The Chern-Simons partition function of $\mathcal{L}$ is given by
\begin{align}\label{eqn: definition of Chern-Simons partition function in sec0}
 Z_{\textrm{CS}}(\mathcal{L};\, q,t)
=\sum_{\vec{A}\in \mathcal{P}^L}W_{\vec{A}}(\mathcal{L};\, q,t)
    \prod_{\alpha=1}^L s_{A^\alpha}(x^\alpha)
\end{align}
for any arbitrarily chosen sequence of variables
\[
 x^\alpha =(x^\alpha_1,x^\alpha_2,\ldots,)\,.
\]
In \eqref{eqn: definition of Chern-Simons partition function in sec0},
$W_{\vec{A}}(\mathcal{L})$ is the quantum group invariants of $\mathcal{L}$
labeled by a sequence of partitions $\vec{A}=(A^1,\ldots,A^L)\in \mathcal{P}^L$
which correspond to the irreducible representations of quantized universal
enveloping algebra $U_q(\fsl(N,\mathbb{C}))$, $s_{A^\alpha}(x^\alpha)$ is the
Schur function.

Free energy is defined to be
\[
 F=\log Z_{\textrm{CS}}\,.
\]
Use plethystic exponential, one can obtain
\begin{equation}\label{eqn: formula of f_A}
  F
= \sum_{d=1}^{\infty }\sum_{\vec{A}\neq 0}
    \frac{1}{d}f_{\vec{A}}(q^{d},t^{d})
    \prod_{\alpha =1}^{L}
    s_{A^{\alpha }}\big( ( x^\alpha)^{d} \big)\,,
\end{equation}
where
\[
 ( x^{\alpha }) ^{d}
= \big((x_{1}^\alpha)^d, (x_{2}^\alpha)^d,\ldots \big) \,.
\]

Based on the duality between Chern-Simons gauge theory and topological string
theory, Labastida, Mari\~{n}o, Ooguri, Vafa conjectured that $f_{\vec{A}}$ have
the following highly nontrivial structures.

For any $A$, $B\in \mathcal{P}$, define the following function
\begin{equation}\label{eqn: formula of M_AB}
 M_{AB}(q)
=\sum_{\mu} \frac{\chi_A(C_\mu) \chi_B(C_\mu)}{\mathfrak{z}_\mu}
 \prod_{j=1}^{\ell(\mu)} (q^{-\mu_j/2}-q^{\mu_j/2})\, .
\end{equation}

\begin{lmovconj}\label{conj: LMOV}
For any $\vec{A}\in \mathcal{P}^{L}$,
\begin{itemize}
\item[$(\mathrm{i})$.] there exists $P_{\vec{B}}(q,t)$ for
$\forall\vec{B}\in\mathcal{P}^L$, such that
\begin{equation}\label{eqn: f_A of P_B}
 f_{\vec{A}}(q,t)
=\sum_{ |B^\alpha|=|A^\alpha|} P_{\vec{B}}(q,t)
 \prod_{\alpha =1}^L M_{A^\alpha B^\alpha }(q).
\end{equation}
Furthermore, $P_{\vec{B}}(q,t)$ has the following expansion
\begin{equation}\label{eqn: P_B}
 P_{\vec{B}}(q,t)
=\sum_{g= 0}^\infty \sum_{Q\in \mathbb{Z}/2} N_{\vec{B};\,g,Q}
 (q^{-1/2}-q^{1/2})^{2g-2}t^{Q}\,.
\end{equation}

\item[$(\mathrm{ii})$.] $N_{\vec{B};\,g,Q}$ are integers.
\end{itemize}
\end{lmovconj}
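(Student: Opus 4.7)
The statement splits into an algebraic/analytic claim (i) about the pole structure of a transform of $f_{\vec A}$ at $q=1$, and an arithmetic claim (ii) about integrality. My plan is to prove (i) by character orthogonality combined with a pole-order estimate, and (ii) by prime-by-prime Frobenius-type congruences. I would use throughout that $W_{\vec A}(\mathcal{L};q,t)\in\mathbb{Z}[q^{\pm 1/2},t^{\pm 1/2}]$ and that it admits a Hecke algebra / quantum trace presentation via cabled braid closures.

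\textbf{Part (i).} The matrix $M_{AB}(q)$ is a character-table transform with weights $\prod_j(q^{-\mu_j/2}-q^{\mu_j/2})$, so orthogonality of characters inverts it formally over $\mathbb{Q}(q^{1/2})$ and uniquely determines candidate functions $P_{\vec B}(q,t)$ from (\ref{eqn: f_A of P_B}); the content of (i) is therefore the expansion (\ref{eqn: P_B}). I would pass to the power-sum basis,
\[
 F=\sum_{d\ge 1}\sum_{\vec\mu\ne 0}\frac{1}{d}\,g_{\vec\mu}(q^d,t^d)\prod_\alpha p_{\mu^\alpha}(x^\alpha),
\]
with $g_{\vec\mu}$ obtained by plethystic logarithm from the power-sum coefficients of $Z_{\mathrm{CS}}$, and identify $P_{\vec B}$ (after the character transform) with $g_{\vec\mu}/\prod_{\alpha,j}(q^{-\mu^\alpha_j/2}-q^{\mu^\alpha_j/2})$. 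The expansion (\ref{eqn: P_B}) then reduces to the claim that this ratio, viewed in $z=q^{-1/2}-q^{1/2}$, contains only even powers of $z$ and has at worst a pole of order $2$. The parity follows from the $q\leftrightarrow q^{-1}$ symmetry of colored HOMFLY; the order bound requires showing cancellation of deeper apparent poles inside the plethystic log, which I would reduce to the framing/Hopf-link asymptotics of $W_{\vec A}$ together with the leading $1/N$ structure of large-$N$ Chern-Simons theory.

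\textbf{Part (ii) and the main obstacle.} For integrality, the plan is to establish, for every prime $p$, a Frobenius congruence of the shape
\[
 f_{\vec A}(q,t)\;\equiv\;\psi_p\bigl(\tilde f_{\vec A/p}(q,t)\bigr)\pmod{p},
\]
valid when every part of every $A^\alpha$ is divisible by $p$ (with a suitable vanishing congruence in the remaining cases), where $\psi_p$ is the Adams operation $q\mapsto q^p,\ t\mapsto t^p$ and $\tilde f_{\vec A/p}$ is the plethystic component at the ``divided'' partition. Combined over all primes with M\"obius inversion, these force the $P_{\vec B}(q,t)$ of Part (i) to lie in $\mathbb{Z}[(q^{-1/2}-q^{1/2})^{\pm 1},t^{\pm 1/2}]$, and together with the pole bound from Part (i) this yields $N_{\vec B;g,Q}\in\mathbb{Z}$. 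I expect the chief obstacle to be proving these congruences uniformly in $p$: since no closed form for $W_{\vec A}$ is available for general links, one must present $\mathcal{L}$ as the closure of a braid, lift the $p$-cabled braid into the appropriate Hecke algebra, and exploit the $\mathbb{Z}/p$-action on $p$-cablings together with $x^p\equiv x\pmod p$ for integer Laurent polynomials to extract the congruence. Ensuring that the resulting identities are independent of the braid presentation, and that they interact correctly with the plethystic logarithm, is the main technical burden of the proof.
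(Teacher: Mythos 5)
Your high-level architecture matches the paper's (symmetry under $q\leftrightarrow q^{-1}$ plus a pole-order bound for part (i); prime-by-prime Frobenius-type congruences plus M\"obius inversion for part (ii)), but at the two places where the real work happens your proposal only gestures at a mechanism, and in both cases the missing mechanism is the substance of the proof. For part (i), the bound that the pole of $P_{\vec{B}}$ at $q=1$ has order at most $2$ is equivalent to $\deg_u F_{\vec{\mu}}\geq \ell(\vec{\mu})-2$ (with $q=e^u$), and ``framing/Hopf-link asymptotics together with the leading $1/N$ structure'' does not produce this. The paper's proof is a genuine induction: the framed partition function satisfies the cut-and-join equation $\partial_{\tau_\alpha}F=\tfrac{u}{2}\mathfrak{L}_\alpha F$, the Mari\~no--Vafa formula is used to show the relevant leading coefficient in $\tau$ is nonvanishing (so the degree in $u$ cannot drop), the cabling identity $\hF_{(\mu^1,\ldots,\mu^L)}(\mathcal{L})=\hF_{(\mu^1_1),\ldots,(\mu^L_{\ell_L})}(\mathcal{L}_{\vec{\mu}})$ reduces to one-row colors, and the base case $\deg_u F^{\circ}(\mathcal{L})\geq L-2$ is done by a skein-relation induction on the number of components using $F^{\circ}(\mathcal{L}_1\otimes\mathcal{L}_2)=0$. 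None of this is recoverable from your sketch, and it is the paper's main new technique.

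For part (ii) there are two gaps. First, your congruence is posed for $f_{\vec{A}}$ at a fixed label $\vec{A}$, but the denominators $\theta_{\Lambda}=\frac{(-1)^{\ell(\Lambda)-1}}{\ell(\Lambda)}\mathfrak{u}_{\Lambda}$ appearing in the plethystic logarithm are not controlled label-by-label; the paper instead works with the Schur-weighted series $T_{\vec{d}}=\sum_{|\vec{B}|=\vec{d}}s_{\vec{B}}(y)P_{\vec{B}}(q,t)$ in auxiliary variables $y$, proves $\Ord_p\big(\Phi_{p\vec{d}}(y;q,t)-\tfrac1p\Phi_{\vec{d}}(y^p;q^p,t^p)\big)\geq 0$ by elementary number theory (multinomial congruences mod $p^{2r}$ and a Fermat-type estimate), and only then extracts integrality of each $N_{\vec{B};g,Q}$ from the fact that Schur functions are a $\mathbb{Z}$-basis of symmetric functions. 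Your proposed $\mathbb{Z}/p$-action on $p$-cablings is a plausible alternative route to the congruence but is not developed. Second, and more seriously, your conclusion ``$P_{\vec{B}}\in\mathbb{Z}[(q^{-1/2}-q^{1/2})^{\pm 1},t^{\pm 1/2}]$'' silently assumes that $P_{\vec{B}}$ has no poles at roots of unity other than $q=1$, which is not automatic: the inversion formula for $P_{\vec{B}}$ divides by $\phi_{\vec{\mu}}(q)=\prod[\mu^{\alpha}_j]_q$, which vanishes at many roots of unity. The paper eliminates these by a separate argument --- the framing choice $\tau_\alpha=n_\alpha+1/d_\alpha$ identifies $\hZ_{\vec{d}}$ with a HOMFLY polynomial of a satellite link, the convolution formula removes the framing dependence, and the multiple-cover structure (the principal part of $\widetilde{F}_{\vec{\mu}}$ is $H_{\vec{\mu}/D_{\vec{\mu}}}(t^{D_{\vec{\mu}}})/[D_{\vec{\mu}}]^2$ with numerator depending only on $\vec{\mu}/D_{\vec{\mu}}$) lets the M\"obius identity $\sum_{d|n}\mu(d)/d=\delta_{1,n}$ cancel every pole away from $q=1$. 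Without this step the integrality claim cannot be closed.
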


For the meaning of notations, the definition of quantum group invariants of
links and more details, please refer to Section \ref{sec: lmov}.

This conjecture contains two parts:
\begin{itemize}
 \item The existence of the special algebraic structure
\eqref{eqn: P_B}.

 \item The integrality of the new invariants $N_{\vec{B};\, g,Q}$.
\end{itemize}

If one looks at the right hand side of \eqref{eqn: P_B}, one will find it very
interesting that the pole of $f_{\vec{A}}$ in $(q^{-1/2}-q^{1/2})$ is actually
at most of order $1$ for any link and any labeling partitions.
However, by the calculation of quantum group invariants of links, the pole order
of $f_{\vec{A}}$ might be going to $\infty$ when the degrees of labeling
partitions go higher and higher. This miracle cancelation
implies a very special algebraic structure of quantum group invariants of links
and thus the Chern-Simons partition function. in Section \ref{subsec:
application to knot theory}, we include an example in the simplest setting
showing that this cancelation has shed new light on the quantum group
invariants of links.

\subsection{Main ideas of the proof}

In our proof of LMOV conjecture, there are three new techniques.
\begin{itemize}
 \item
When dealing with the existence of the conjectured algebraic structure, one
will encounter the problem of how to control the pole order of
$(q^{-1/2}-q^{1/2})$. We consider the \emph{framed partition function}
$Z(\mathcal{L};\,q,t,\tau )$ of Chern-Simons invariants of links which
satisfies the following cut-and-join equation
\begin{align*}
&   \frac{\partial Z(\mathcal{L};\,q,t,\tau )}{\partial \tau }
\\
& = \frac{u}{2} \sum_{\alpha =1}^{L}
   \sum_{i,j\geq 1}
    \bigg(
     ijp^\alpha_{i+j}
     \frac{\partial ^{2}}
    {\partial p^\alpha_{i}\partial p^\alpha_{j}}
    +(i+j)p^\alpha_{i}p^\alpha_{j}
     \frac{\partial }{\partial p^\alpha_{i+j}}
    \bigg)
    Z(\mathcal{L};\,q,t,\tau ) \,.
\end{align*}
Here $p^\alpha_n=p_n(x^\alpha)$ are regarded as independent variables.

However, a deeper understanding of this conjecture relies on the following log
cut-and-join equation
\begin{align*}
 & \frac{\partial F(\mathcal{L};\,q,t,\tau )}{\partial \tau }
\\
 &=\frac{u}{2}\sum_{\alpha =1}^{L}
    \sum_{i,j\geq 1}
    \bigg(
      ijp^\alpha_{i+j}\frac{\partial^{2}F}
    {\partial p^\alpha_i\partial p^\alpha_j}
             +(i+j)p^\alpha_i p^\alpha_j
        \frac{\partial F}{\partial p^\alpha_{i+j}}
            +ijp^\alpha_{i+j}
      \frac{\partial F}{\partial p^\alpha_i}
        \frac{\partial F}{\partial p^\alpha_j}
    \bigg)\,.
\end{align*}
This observation is based on the duality of Chern-Simons theory and open
Gromov-Witten theory. The log cut-and-join equation is a non-linear ODE systems
and the non-linear part reflects the essential recursion structure of
Chern-Simons partition function. The miracle cancelation of lower order terms of
$q^{-1/2}-q^{1/2}$ occurring in free energy can be indicated in the formulation
of generating series of open Gromov-Witten invariants on the geometric side.

A powerful tool to control the pole order of $(q^{-1/2}-q^{1/2})$ through log
cut-and-join equation is developed in this paper as we called \emph{cut-and-join
analysis}. An important feature of cut-and-join equation shows that the
differential equation at
partition\footnote{Here, we only take a knot as an example. For the
case of links, it is then a natural extension.}
$(d)$ can only have terms obtained from
joining\footnote{Joining means that a partition is obtained from combining
two rows of a Young diagram and reforming it into a new partition, while cutting
means that a partition is obtained by cutting a row of a Young diagram into two
rows and reforming it into a new partition.}
while at $(1^d)$, non-linear terms vanishes and there is no joining terms. This
special feature combined with the degree analysis will squeeze out the desired
degree of $(q^{-1/2}-q^{1/2})$.

 \item
We found a rational function ring which characterizes the algebraic structure
of Chern-Simons partition function and hence (open) topological string partition
function by duality.

A similar ring characterizing closed topological string partition function
firstly appears in the second author's work on Gopakumar-Vafa conjecture
\cite{P1,P2}. The original observation in the closed case comes from the
structure of $R$-matrix in quantum group theory and gluing algorithm in the
topological vertex theory.

However, the integrality in the case of open topological string theory is
more subtle than the integrality of Gopakumar-Vafa invariants in the closed
case. This is due to the fact that the reformulation of Gromov-Witten
invariants as Gopakumar-Vafa invariants in the closed case weighted by power of
curve classes, while in the open case, the generating function is weighted by
the labeling irreducible representation of $U_q(\fsl(N,\mathbb{C}))$. This
subtlety had already been explained in \cite{LMV}.

To overcome this subtlety, one observation is that quantum group invariants look
Schur-function-like. This had already been demonstrated in the topological
vertex theory (also see \cite{Lukac}).
We refine the ring in the closed case and get the new ring
$\mathfrak{R}(y;\,q,t)$ (cf. section \ref{subsec: Integrality - ring
structure}). Correspondingly, we consider a new generating series of
$N_{\vec{B};\,g,Q}$, $T_{\vec{d}}$, as defined in \eqref{eqn: def of T_d}.

 \item
To prove $T_{\vec{d}}\in\mathfrak{R}(y;\,q,t)$, we combine with the multi-cover
contribution and $p$-adic argument therein. Once we can prove that $T_{\vec{d}}$
lies in $\mathfrak{R}$, due to the pole structure of the ring $\mathfrak{R}$,
the vanishing of $N_{\vec{B};\, g,Q}$ has to occur at large genera, we actually
proved
\[
 \sum_{g=0}^\infty \sum_{Q\in \mathbb{Z}/2} N_{\vec{B};\, g,Q}
 (q^{-1/2}-q^{1/2})^{2g} t^Q
\in \mathbb{Z} [(q^{-\frac{1}{2}}-q^{\frac{1}{2}})^2, t^{\pm \frac{1}{2}}]\,.
\]
\end{itemize}

The paper is organized as follows. In Section \ref{sec: partition}, we introduce
some basic notations about partition and generalize this concept to simplify our
calculation in the following sections. Quantum group invariants of links and
main results are introduced in Section \ref{sec: lmov}. In Section \ref{sec:
Hecke algebra}, we review some knowledge of Hecke algebra used in this paper. In
Section \ref{sec: existence} and \ref{sec: integrality}, we give the proof of
Theorem \ref{thm: existence} and \ref{thm: integrality} which answer LMOV
conjecture. In the last section, we discuss some problems related to LMOV
conjecture for our future research.

\subsection{Acknowledgments}

The authors would like to thank Professors S.-T. Yau, F. Li and Z.-P. Xin for
valuable discussions. We would also want to thank Professor M. Mari\~no for
pointing out some misleading parts. K.~L. is supported by NSF grant. P.~P. is
supported by NSF grant DMS-0354737 and Harvard University.

Before he passed away, Professor Xiao-Song Lin had been very interested in the
LMOV conjecture, and had been working on it. We would like to dedicate this
paper to his memory.

\section{Preliminary}\label{sec: partition}

\subsection{Partition and symmetric function}

A \emph{partition} $\lambda $ is a finite sequence of positive integers
$(\lambda_1,\lambda_2,\cdots)$
such that
\[
 \lambda_1\geq \lambda_2\geq \cdots \,.
\]
The total number of parts in $\lambda $ is called the length of $\lambda $ and
denoted by $\ell(\lambda )$. We use $m_i(\lambda)$ to denote the number of times
that $i$ occurs in $\lambda$. The degree of $\lambda$ is defined to be
\[
 |\lambda|=\sum_i\lambda_i\,.
\]
If $|\lambda |=d$, we say $\lambda $ is a partition of $d$. We also use notation
$\lambda \vdash d$. The automorphism group of $\lambda $, $\Aut\lambda$,
contains all the permutations that permute parts of $\lambda $ while still
keeping it as a partition. Obviously, the order of $\Aut\lambda $ is given by
\[
 |\Aut\lambda |=\prod_{i}m_{i}(\lambda )!\,.
\]
There is another way to rewrite a partition $\lambda $ in the following format
\[
 (1^{m_{1}(\lambda )}2^{m_{2}(\lambda )}\cdots)\,.
\]

A traditional way to visualize a partition is to identify a partition as a
\emph{Young diagram}. The Young diagram of $\lambda $ is a 2-dimensional graph
with
$\lambda _{j}$ boxes on the $j$-th row, $j=1,2,...,\ell (\lambda )$. All the
boxes are put to fit the left-top corner of a rectangle. For example
\[
 (5,4,2,2,1)=(12^{2}45)=\partition{5 4 2 2 1}.
\]
For a given partition $\lambda $, denote by $\lambda^t$ the conjugate
partition of $\lambda $. The Young diagram of $\lambda^t$ is transpose to the
Young diagram of $\lambda $: the number of boxes on $j$-th column of
$\lambda^t$ equals to the number of boxes on $j$-th row of $\lambda$, where
$1\leq j\leq \ell(\lambda )$.

By convention, we regard a Young diagram with no box as the partition of $0$ and
use notation $(0)$. Denote by $\mathcal{P}$ the set of all partitions. We can
define an operation $``\cup "$ on $\mathcal{P}$. Given two partitions $\lambda $
and $\mu $, $\lambda \cup \mu $ is the partition by putting all the parts of
$\lambda $ and $\mu $ together to form a new partition. For example
\[
 (12^{2}3)\cup (15) = (1^{2}2^{2}35).
\]
Using Young diagram, it looks like
\[
 \partition{3 2 2 1}\quad \cup \quad \partition{5 1}\
= \ \partition{5 3 2 2 1 1}\,.
\]

The following number associated with a partition $\lambda $ is used
throughout this paper,
\begin{align*}
 \mathfrak{z}_{\lambda }
=\prod_j j^{m_j(\lambda )}m_j(\lambda )!\,, \qquad
 \kappa_\lambda =\sum_j\lambda_j(\lambda_j -2j+1)\,.
\end{align*}
It's easy to see that
\begin{equation}\label{eqn: kappa_lambda symmetry}
 \kappa _{\lambda } = -\kappa _{\lambda ^{t}}.
\end{equation}

A power symmetric function of a sequence of variables $x=(x_{1},x_{2},...)$
is defined as follows
\[
 p_{n}(x)=\sum_{i}x_{i}^{n}.
\]
For a partition $\lambda$,
\[
 p_{\lambda }(x) = \prod_{j=1}^{\ell (\lambda )}p_{\lambda _{j}}(x).
\]

It is well-known that every irreducible representation of symmetric group can be
labeled by a partition. Let $\chi_\lambda$ be the character of the irreducible
representation corresponding to $\lambda$. Each conjugate class of symmetric
group can also be represented by a partition $\mu$ such that the permutation in
the conjugate class has cycles of length $\mu_1,\ldots, \mu_{\ell(\mu)}$.
Schur function $s_{\lambda }$ is determined by
\begin{equation} \label{eqn: schur function}
 s_{\lambda }(x)
=\sum_{|\mu |=|\lambda|}\frac{\chi_\lambda(C_\mu)}{\mathfrak{z}_\mu}p_\mu(x)
\end{equation}
where $C_\mu$ is the conjugate class of symmetric group corresponding to
partition $\mu $.

\subsection{Partitionable set and infinite series}

The concept of partition can be generalized to the following
\emph{partitionable set}.

\begin{definition}
A countable set $(S, +)$ is called a partitionable set if
\begin{enumerate}
\item[$1)$.] $S$ is totally ordered.

\item[$2)$.] $S$ is an Abelian semi-group with summation $``+"$.

\item[$3)$.] The minimum element $\mathbf{0}$ in $S$ is the zero-element of
the semi-group, i.e., for any $a\in S$,
\[
 \mathbf{0} +a=a=a+\mathbf{0} .
\]
\end{enumerate}
\end{definition}
For simplicity, we may briefly write $S$ instead of $(S, +)$.

\begin{example}
The following sets are examples of partitionable set:
\begin{itemize}
\item[$(1)$.] The set of all nonnegative integers $\mathbb{Z}_{\geq 0}$;

\item[$(2)$.] The set of all partitions $\mathcal{P}$. The order of
$\mathcal{P}$ can be defined as follows:
$\forall \lambda$ , $\mu \in \mathcal{P}$, $\lambda \geq \mu$ iff
$|\lambda| > |\mu|$, or $|\lambda|=|\mu|$ and there exists a $j$
such that $\lambda_i=\mu_i$ for $i\leq j-1$ and $\lambda_j>\mu_j$.
The summation is taken to be $``\cup "$ and the zero-element is $(0)$.

\item[$(3)$.] $\mathcal{P}^{n}$. The order of $\mathcal{P}^{n}$ is defined
similarly as $(2)$:
$\forall \vec{A}, \vec{B}\in \mathcal{P}^{n},\,
 \vec{A}\geq \vec{B}$
iff
$\sum_{i=1}^n |A^i|>\sum_{i=1}^n |B^i|$, or
$\sum_{i=1}^n |A^i| = \sum_{i=1}^n |B^i|$ and there is a $j$ such that
$A^{i} = B^{i}$ for $i\leq j-1$ and $A^j > B^j$.
Define
\[
 \vec{A}\cup \vec{B} = (A^{1}\cup B^{1},...,A^{n}\cup B^{n}).
\]
$((0),(0),...,(0))$ is the zero-element. It's easy to check that
$\mathcal{P}^{n}$
is a partitionable set.
\end{itemize}
\end{example}

Let $S$ be a partitionable set. One can define partition with respect to $S$
in the similar manner as that of $\mathbb{Z}_{\geq 0}$: a finite sequence of
non-increasing non-minimum elements in $S$. We will call it an $S$-partition,
$(\mathbf{0})$ the zero $S$-partition. Denote by $\mathcal{P}(S)$ the set of
all $S$-partitions.

For an $S$-partition $\Lambda$, we can define the automorphism group of
$\Lambda$ in a similar way as that in the definition of traditional partition.
Given $\beta\in S$, denote by $m_\beta(\Lambda)$ the number of times that
$\beta$ occurs in the parts of $\Lambda$, we then have
\[
 \Aut \Lambda = \prod_{\beta\in S} m_\beta(\Lambda)! \,.
\]
Introduce the following quantities associated with $\Lambda$,
\begin{align}\label{def of u-lambda and theta_lambda}
    \mathfrak{u}_\Lambda = \frac{\ell(\Lambda)!}{|\Aut \Lambda|}\,,
\qquad
    \theta_\Lambda = \frac{(-1)^{\ell(\Lambda)-1}} {\ell(\Lambda)}
    \mathfrak{u}_\Lambda \,.
\end{align}

The following Lemma is quite handy when handling the expansion of generating
functions.

\begin{lemma}\label{lemma: taylor expansion}
Let $S$ be a partitionable set. If $f(t)=\sum_{n\geq 0}a_{n}t^{n}$, then
\[
  f\bigg( \sum_{\beta \neq \mathbf{0},\,\beta \in S}
    A_{\beta }\, p_\beta (x)
   \bigg)
= \sum_{\Lambda \in \mathcal{P}(S)} a_{\ell (\Lambda)}
    A_{\Lambda}\,p_{\Lambda }(x)\,\fu_{\Lambda}\,,
\]
where
\begin{align*}
  p_{\Lambda } = \prod_{j=1}^{\ell(\Lambda )}p_{\Lambda _{j}}\,,
    \qquad
  A_{\Lambda} = \prod_{j=1}^{\ell(\Lambda )}A_{\Lambda _{j}}\,.
\end{align*}
\end{lemma}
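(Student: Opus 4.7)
The plan is to prove this by unwinding the definition of $f$ as a formal power series and applying the multinomial theorem, then reorganising the resulting sum by the $S$-partition naturally associated to a tuple of exponents. Since all quantities are formal in the $p_\beta$'s, there is no analytic issue; the entire content is combinatorial.

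First I would write
\[
 f\bigg( \sum_{\beta \neq \mathbf{0}} A_\beta p_\beta(x) \bigg)
= \sum_{n\geq 0} a_n
  \bigg( \sum_{\beta \neq \mathbf{0}} A_\beta p_\beta(x) \bigg)^n,
\]
and expand the $n$-th power by the multinomial theorem over tuples
$(n_\beta)_{\beta\neq\mathbf{0}}$ of nonnegative integers with $\sum_\beta n_\beta = n$:
\[
 \bigg( \sum_{\beta \neq \mathbf{0}} A_\beta p_\beta(x) \bigg)^n
= \sum_{\sum_\beta n_\beta = n}
   \frac{n!}{\prod_\beta n_\beta!}
   \prod_\beta A_\beta^{n_\beta}\, p_\beta(x)^{n_\beta}.
\]

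Next I would establish the bookkeeping bijection: a tuple $(n_\beta)$ of multiplicities (almost all zero) is the same datum as an $S$-partition $\Lambda$, via $m_\beta(\Lambda) = n_\beta$. Under this identification $\ell(\Lambda) = \sum_\beta n_\beta = n$, $|\Aut \Lambda| = \prod_\beta n_\beta!$, $\prod_\beta A_\beta^{n_\beta} = A_\Lambda$, and $\prod_\beta p_\beta(x)^{n_\beta} = p_\Lambda(x)$. Consequently the multinomial coefficient becomes $n!/|\Aut\Lambda| = \fu_\Lambda$, and the sum collapses to
\[
 \sum_{n\geq 0} a_n \sum_{\substack{\Lambda \in \mathcal{P}(S)\\ \ell(\Lambda)=n}}
  \fu_\Lambda\, A_\Lambda\, p_\Lambda(x)
= \sum_{\Lambda \in \mathcal{P}(S)} a_{\ell(\Lambda)}\, A_\Lambda\, p_\Lambda(x)\, \fu_\Lambda,
\]
which is the asserted identity.

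The only point that requires care, and what I would flag as the main (very mild) obstacle, is checking that this reorganisation is legitimate at the level of formal series in the variables $A_\beta$ and $p_\beta$: each $S$-partition $\Lambda$ contributes to only finitely many $n$ (indeed exactly one), so the outer and inner sums may be interchanged term by term, and no question of convergence or rearrangement arises. Once that is noted, the proof is complete.
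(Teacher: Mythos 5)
Your proof is correct and follows essentially the same route as the paper: the paper's entire argument is the single identity $\bigl(\sum_{\beta\neq\mathbf{0}}\eta_\beta\bigr)^n=\sum_{\ell(\Lambda)=n}\eta_\Lambda\,\fu_\Lambda$ followed by ``direct calculation,'' and your multinomial-theorem expansion with the identification $m_\beta(\Lambda)=n_\beta$, $\fu_\Lambda=\ell(\Lambda)!/|\Aut\Lambda|$ is exactly that identity spelled out. Your closing remark on the formal-series interchange is a harmless extra precaution that the paper leaves implicit.
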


\begin{proof}
Note that
\[
 \bigg(\sum_{\beta\in S,\,\beta\neq\mathbf{0}}\eta_\beta\bigg)^n
=\sum_{\Lambda\in\mathcal{P}(S),\,\ell(\Lambda)=n}
    \eta_{\Lambda}\fu_{\Lambda}\,.
\]
Direct calculation completes the proof.
\end{proof}

\section{Labastida-Mari\~{n}o-Ooguri-Vafa conjecture}\label{sec: lmov}

\subsection{Quantum trace}

Let $\mathfrak{g}$ be a finite dimensional complex semi-simple Lie algebra of
rank $N$ with Cartan matrix $(C_{ij})$. \emph{Quantized universal enveloping
algebra} $U_q (\mathfrak{g})$ is generated by $\{ H_{i},X_{+i},X_{-i}\}$
together with the following defining relations:
\begin{align*}
& [H_{i},H_{j}]=0\,, \quad [H_{i},X_{\pm j}]=\pm C_{ij}X_{\pm j}\,,\quad
    [X_{+i},X_{-j}]
  = \delta_{ij} \frac{q_i^{-H_i/2}-q_{i}^{H_i/2}}
    {q_i^{-1/2}-q_i^{1/2}}\,,
\\
& \sum_{k=0}^{1-C_{ij}} (-1)^{k}
    \Big\{
        \begin{array}{c}
         1-C_{ij} \\
         k
        \end{array}
    \Big\}_{q_{i}}
    X_{\pm i}^{1-C_{ij}-k}X_{\pm j} X_{\pm i}^{k}=0\,, \quad
    \textrm{for all }i\neq j \,,
\end{align*}
where
\begin{align*}
  \{k\}_{q}
 =\frac{q^{-\frac{k}{2}}-q^{\frac{k}{2}}}
  {q^{-\frac{1}{2}}-q^{\frac{1}{2}}}\,,
    &&
 \{k\}_{q}!=\prod_{i=1}^{k}\{i\}_q\,,
\end{align*}
and
\[
 \Big\{
    \begin{array}{c}
     a \\
     b
    \end{array}
 \Big\}_q
=\frac{\{a\}_{q} \cdot \{a-1\}_{q} \cdots \{a-b+1\}_{q}}{\{b\}_{q}!}\,.
\]

The ribbon category structure associated with $U_q(\mathfrak{g})$ is given
by the following datum:

\begin{enumerate}
\item For any given two $U_q(\mathfrak{g})$-modules $V$ and $W$, there is an
isomorphism
\[
 \chR_{V,\,W}:\, V\otimes W \rightarrow W \otimes V
\]
satisfying
\begin{align*}
   \chR_{U\otimes V,\,W}
&=(\chR_{U,\,W}\otimes\id_V)(\id_U\otimes\chR_{V,\,W})
\\
   \chR_{U,\,V\otimes W}
&=(\id_V\otimes\chR_{U,\,W})(\chR_{U,\,V}\otimes\id_W)
\end{align*}
for $U_q(\mathfrak{g})$-modules $U$, $V$, $W$.

Given $f\in \Hom_{U_q( \mathfrak{g})}(U,\widetilde{U})$,
$g\in \Hom_{U_q(\mathfrak{g})}(V,\widetilde{V})$, one has the following
naturality condition:
\[
 (g\otimes f)\circ\chR_{U,\,V}
=\chR_{\widetilde{U},\,\widetilde{V}}\circ (f\otimes g)  \,.
\]

\item There exists an element
$K_{2\rho}\in U_q(\mathfrak{g})$, called the enhancement  of
$\check{\mathcal{R}}$, such that
\[
 K_{2\rho}(v\otimes w) = K_{2\rho}(v)\otimes K_{2\rho}(w)
\]
for any $v\in V$, $w\in W$.

\item For any $U_q (\mathfrak{g})$-module $V$, the ribbon structure ${%
\Theta }_{V}:V\rightarrow V$ associated to $V$ satisfies
\[
\Theta _{V}^{\pm 1}=\tr_{V}\check{\mathcal{R}}_{V,V}^{\pm 1}.
\]%
The ribbon structure also satisfies the following naturality condition
\[
x\cdot \Theta _{V}=\Theta _{\widetilde{V}}\cdot x.
\]%
for any $x\in \Hom_{U_q (\mathfrak{g})}(V,\widetilde{V})$.
\end{enumerate}

\begin{definition}
Given
$
 z
=\sum_{i}x_i\otimes y_i\in \End_{U_q(\mathfrak{g})}(U\otimes V)
$,
the quantum trace of $z$ is defined as follows
\[
 \tr_{V}(z)
=\sum_{i}\tr(y_i K_{2\rho })x_i\in \End_{U_q(\mathfrak{g})}(U).
\]
\end{definition}

\subsection{Quantum group invariants of links}
\label{subsec: quantum group invariants}

Quantum group invariants of links can be defined over any complex simple Lie
algebra $\mathfrak{g}$. However, in this paper, we only consider the quantum
group invariants of links defined over
$\mathfrak{sl}(N,\mathbb{C})$\footnote{In the following context, we will briefly
write $\fsl_N$.}
due to the current
consideration for large $N$ Chern-Simons/topological string duality.

Roughly speaking, a link is several disconnected $S^1$ embedded in $S^3$.
A theorem of J. Alexander asserts that any oriented link is isotopic to the
closure of some braid. A braid group $\mathcal{B}_n$ is defined by
generators $\sigma_1,\cdots,\sigma_{n-1}$ and defining relation:
\[
 \Big\{
  \begin{array}{ll}
   \sigma_i \sigma_j = \sigma_j \sigma_i \,, &
    \textrm{ if } |i-j|\geq 2\, ;
\\
   \sigma_i \sigma_j \sigma_i = \sigma_j \sigma_i \sigma_j \,, &
    \textrm{ if } |i-j|=1 \, .
  \end{array}
\]

Let $\mathcal{L}$ be a link with $L$ components $\mathcal{K}_{\alpha }$,
$\alpha=1,\ldots,L$, represented by the closure of an element of braid group
$\mathcal{B}_{m}$. We associate each $\mathcal{K}_\alpha$ an irreducible
representation $R_\alpha$ of quantized universal enveloping algebra $U_q
(\fsl_N)$, labeled by its highest weight $\Lambda_{\alpha }$. Denote the
corresponding module by $V_{\Lambda_\alpha}$. The $j$-th strand in the braid
will be associated with the irreducible module $V_{j}=V_{\Lambda _{\alpha }}$,
if this strand belongs to the component $\mathcal{K}_{\alpha }$. The braiding is
defined through the following \emph{universal $R$-matrix} of $U_q(\fsl_N)$
\[
 \mathcal{R}
=q^{\frac{1}{2}\sum_{i,j}C_{ij}^{-1}H_i\otimes H_j}
 \prod_{\textrm{positive root }\beta} \exp_q
 [( 1-q^{-1}) E_\beta\otimes F_\beta]\,.
\]
Here $( C_{ij})$ is the Cartan matrix and
\[
 \exp_{q}(x)
=\sum_{k=0}^{\infty }q^{\frac{1}{4}k(k+1)}\frac{x^{k}}{\{k\}_{q}!}\,.
\]
Define \emph{braiding} by $\check{\mathcal{R}}=P_{12}\mathcal{R}$, where
$P_{12}(v\otimes w)=w\otimes v$.

Now for a given link $\mathcal{L}$ of $L$ components, one chooses a closed braid
representative in braid group $\mathcal{B}_m$ whose closure is $\mathcal{L}$. In
the case of no confusion, we also use $\mathcal{L}$ to denote the chosen braid
representative in $\mathcal{B}_m$. We will associate each crossing by the
braiding defined above. Let $U$, $V$ be two $U_q(\fsl_N)$-modules labeling two
outgoing strands of the crossing, the braiding $\check{R}_{U,V}$ (resp.
$\check{R}_{V,U}^{-1}$) is assigned as in Figure \ref{fig: crossings}.
\begin{figure}[!ht]
\begin{align*}
    \psfrag{A}[][]{$U$}
    \psfrag{B}[][]{$V$}
    \psfrag{P}[][]{$\chR_{U,V}$}
    \includegraphics[width=1.5cm]{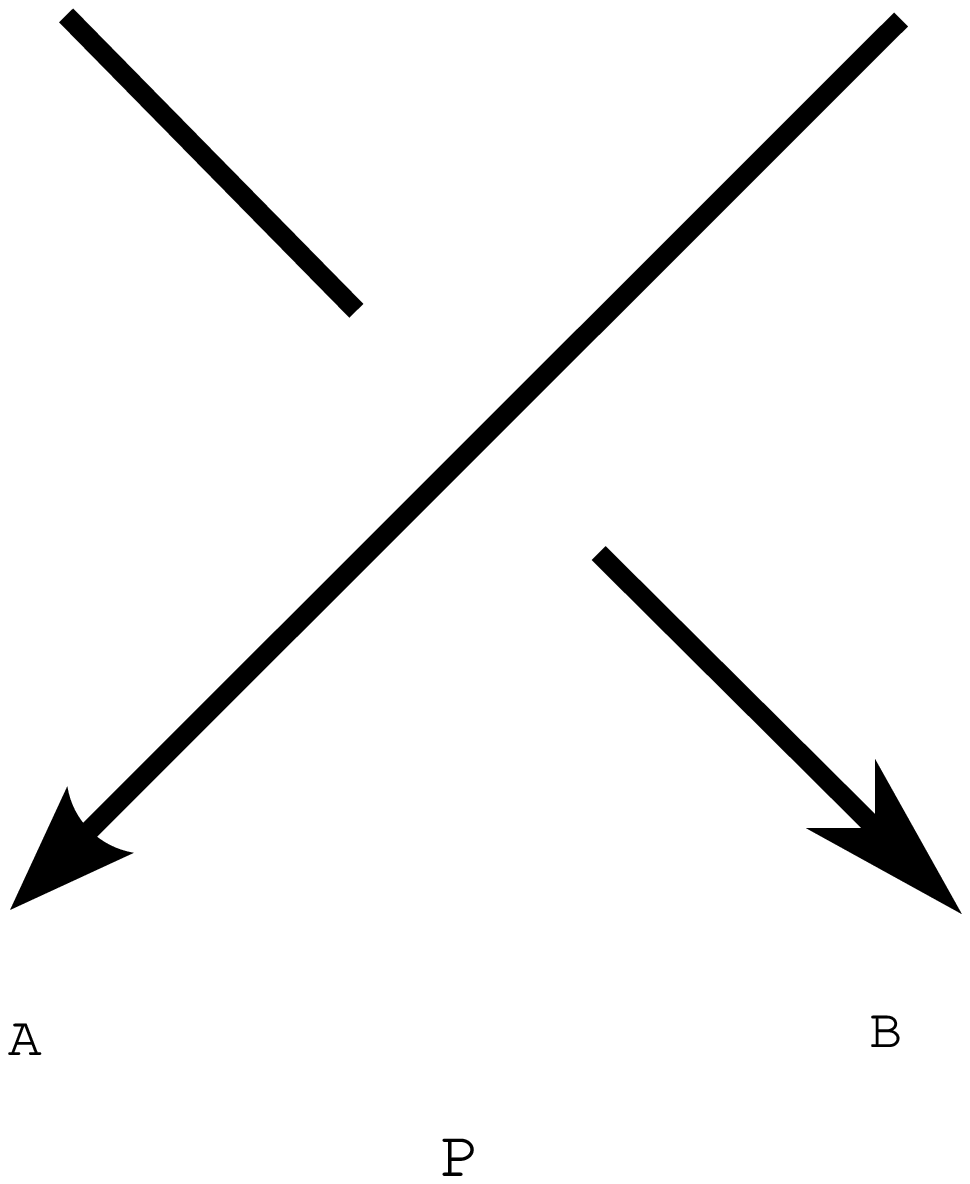}
  &&
    \psfrag{A}[][]{$U$}
    \psfrag{B}[][]{$V$}
    \psfrag{N}[][]{$\chR^{-1}_{V,U}$}
    \includegraphics[width=1.5cm]{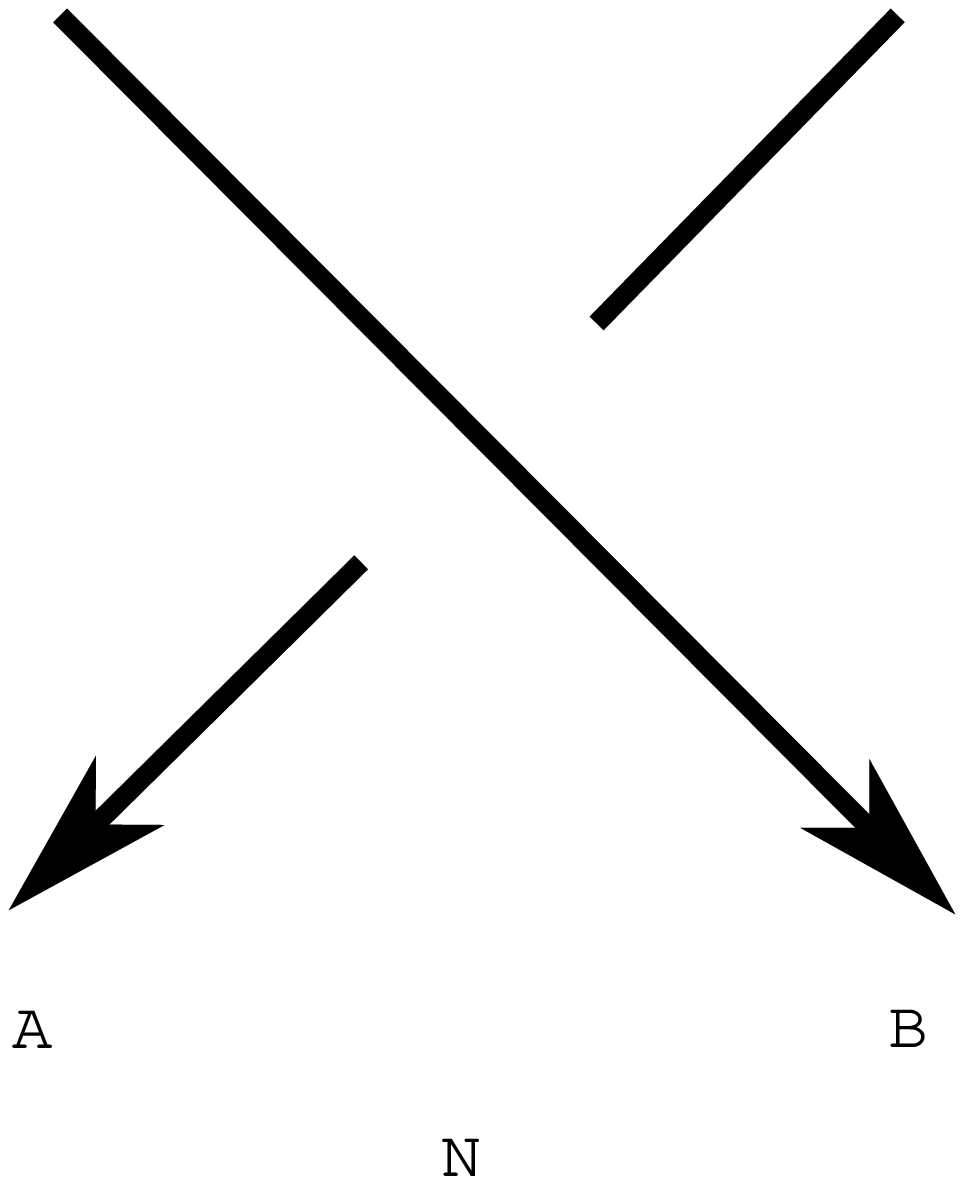}
\end{align*}
\caption{Assign crossing by $\chR$.}\label{fig: crossings}
\end{figure}

The above assignment will give a representation of $\mathcal{B}_m$ on
$U_q(\mathfrak{g})$-module $V_1 \otimes \cdots \otimes V_m$. Namely, for any
generator,
$\sigma_i \in\mathcal{B}_m$\footnote{In the case of $\sigma_i^{-1}$,
use $\chR^{-1}_{V_i, V_{i+1}}$ instead.},
\begin{center}
    \includegraphics[height=1.5cm]{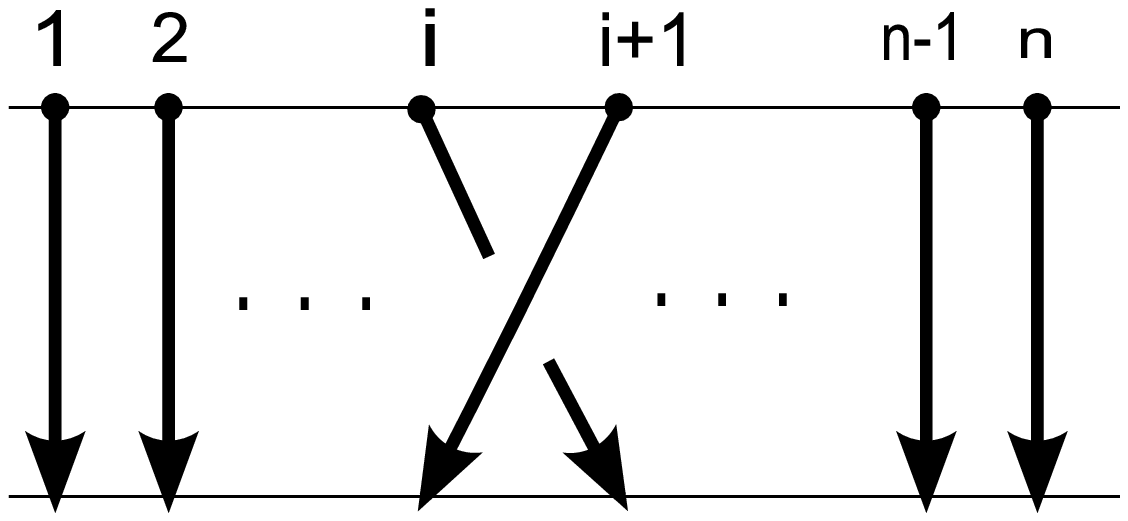}
\end{center}
define
\begin{align*}
 h(\sigma_i)
= \id_{V_1} \otimes \cdots \otimes
 \chR_{V_{i+1}, V_i} \otimes \cdots \otimes id_{V_N}\,.
\end{align*}
Therefore, any link $\mathcal{L}$ will provide an isomorphism
\[
 h(\mathcal{L})
\in \End_{U_q (\fsl_N)}(V_{1}\otimes \cdots \otimes V_{m}) \, .
\]
For example, the link $\mathcal{L}$ in Figure \ref{fig: Hopf link}
gives the following homomorphism
\[
 h(\mathcal{L})
=(\chR_{V,\, U}\otimes \id_U) (\id_V\otimes \chR^{-1}_{U,\, U})
 (\chR_{U,\, V} \otimes \id_U)\, .
\]
\begin{figure}[!ht]
\begin{center}
    \psfrag{U}[][]{$U$}
    \psfrag{V}[][]{$V$}
    \psfrag{L}[][]{$\mathcal{L}$}
    \includegraphics[width=2cm]{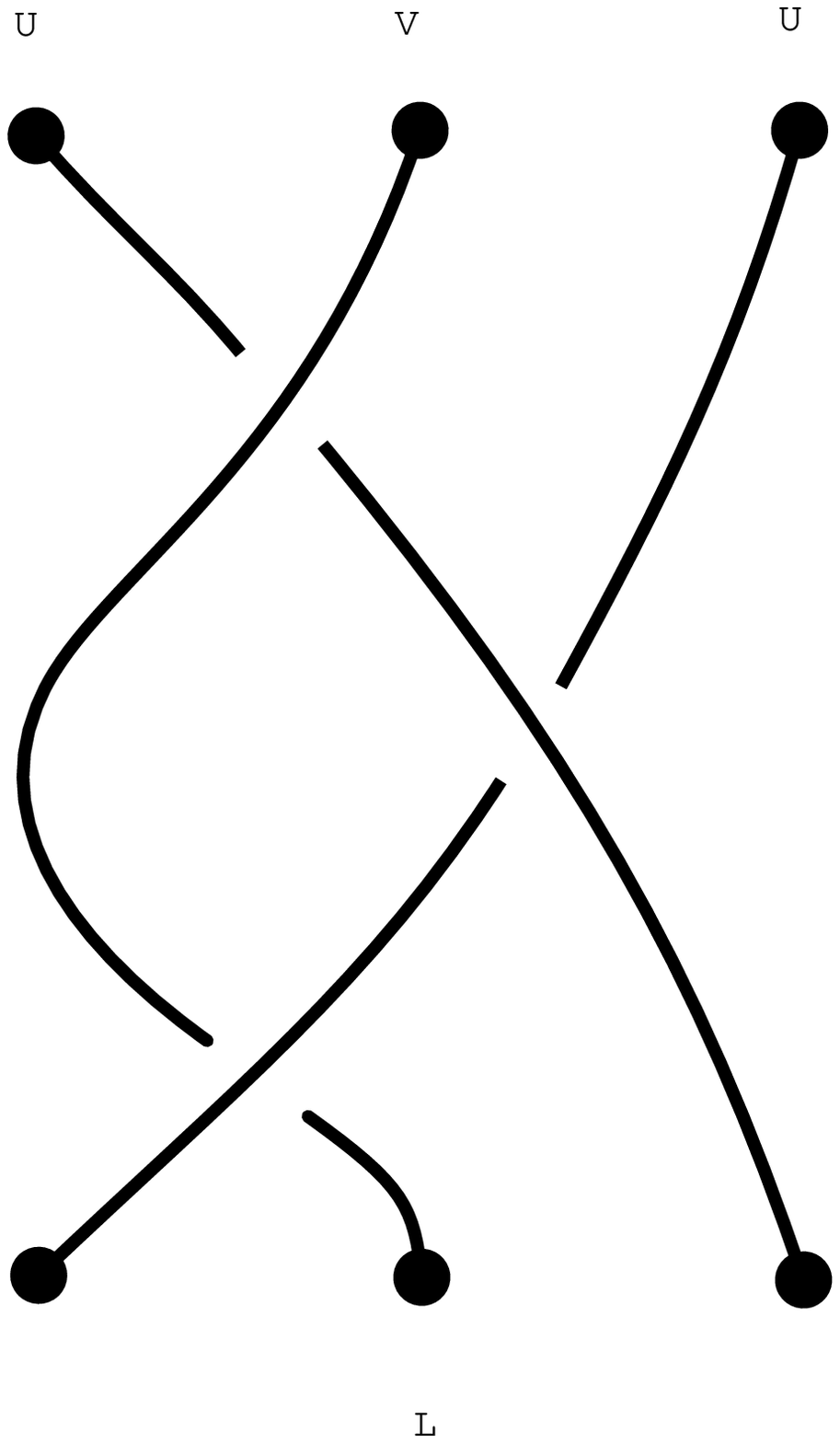}
\end{center}
\caption{A braid representative for Hopf link}\label{fig: Hopf link}
\end{figure}

Let $K_{2\rho}$ be the enhancement of $\check{\mathcal{R}}$ in the sense of
\cite{Rosso-Jones93}, where $\rho $ is the half-sum of all positive roots
of $\fsl_N$. The irreducible representation $R_{\alpha }$ is labeled by the
corresponding partition $A^\alpha$.

\begin{definition}\label{def: quantum group invariant}
Given $L$ labeling partitions $A^1,\ldots,A^L$, the quantum group invariant of
 $\mathcal{L}$ is defined as follows:
\[
    W_{(A^{1},...,A^{L})}(\mathcal{L})
=q^{d(\mathcal{L)}}\tr_{V_{1}
    \otimes\cdots \otimes V_{m}}(h(\mathcal{L})) \, ,
\]
where
\[
 d(\mathcal{L})
=-\frac{1}{2}\sum_{\alpha =1}^{L}\omega (\mathcal{K}_\alpha)
  (\Lambda _{\alpha },\Lambda _{\alpha }+2\rho )
 +\frac{1}{N}\sum_{\alpha<\beta }
  \lk(\mathcal{K}_{\alpha },\mathcal{K}_{\beta })
  |A^{\alpha }|\cdot | A^{\beta }| \,,
\]
and $\lk(\mathcal{K}_\alpha,\mathcal{K}_\beta)$ is the linking number of
components $\mathcal{K}_\alpha$ and $\mathcal{K}_\beta$. A substitution of
$t=q^N$ is used to give a two-variable framing independent link invariant.
\end{definition}

\begin{remark}

In the above formula of $d(\mathcal{L})$, the second term on the right
hand side is meant to cancel not important terms involved with $q^{1/N}$
in the definition.
\end{remark}
It will be helpful to extend the definition to allow some labeling
partition to be the empty partition $(0)$. In this case, the corresponding
invariants will be regarded as the quantum group invariants of the link
obtained by simply removing the components labeled by $(0)$.

A direct computation\footnote{It can also be obtained from the ribbon
structure.} shows that after removing the terms of
$q^{\frac{1}{N}}$, $q^{d(\mathcal{L})}$ can be simplified as
\begin{align}\label{computation of q^d(L)}
 q^{\sum_{\alpha=1}^L \kappa_{A^\alpha} w(\mathcal{K}_\alpha)/2}
\cdot t^{\sum_{\alpha=1}^L |A^\alpha| w(\mathcal{K}_\alpha)/2} \,.
\end{align}

\begin{example}\label{example of quantum group invs}
The following examples are some special cases of quantum group
invariants of links.
\begin{itemize}
\item[$(1)$.] If the link involved in the definition is the unknot $\bigcirc$,
\[
 W_A(\bigcirc;\, q,t)
=\tr_{V_A}(\id_{V_A})
\]
is equal to the quantum dimension of $V_A$ which will be denoted by
$\dim_q V_A$.

\item[$(2)$.] When all the components of $\mathcal{L}$ are associated with
the fundamental representation, i.e., the labeling partition is the unique
partition of $1$, the quantum group invariant of $\mathcal{L}$ is related to the
HOMFLY polynomial of the link, $P_{\mathcal{L}}(q,t)$, in the following way:
\[
 W_{(\partition{1},\cdots,\partition{1})}(\mathcal{L};\, q,t)
=t^{\lk(\mathcal{L})}
 \Bigg(
   \frac{t^{-\frac{1}{2}}-t^{\frac{1}{2}}}{q^{-\frac{1}{2}}-q^{\frac{1}{2}}}
 \Bigg)
 P_{\mathcal{L}}(q,t) \, .
\]

\item[$(3)$.] If $\mathcal{L}$ is a disjoint union of $L$ knots, i.e.,
\[
 \mathcal{L}
=\mathcal{K}_1 \otimes \mathcal{K}_2 \otimes \cdots \otimes\mathcal{K}_L \,,
\]
the quantum group invariants of $\mathcal{L}$ is simply the multiplication of
quantum group invariants of $\mathcal{K}_\alpha$
\[
 W_{(A^1,\ldots,A^L)}(\mathcal{L};\,q,t)
=\prod_{\alpha=1}^L W_{A^\alpha}(\mathcal{K}_\alpha;\,q,t)\,.
\]
\end{itemize}
\end{example}

\subsection{Chern-Simons partition function}

For a given link $\mathcal{L}$ of $L$ components, we will fix the following
notations in this paper. Given
$\lambda\in\mathcal{P}$, $\vec{A}=(A^1,A^2,\ldots,A^L)$,
$\vec{\mu} = (\mu^1,\mu^2,\ldots, \mu^L) \in \mathcal{P}^L$.
Let $x=(x^1,...,x^L)$ where $x^\alpha$ is a series of variables
\[
 x^\alpha=(x^\alpha_1,x^\alpha_2,\cdots)\,.
\]
The following notations will be used throughout the paper:
\begin{align*}
 &
 [n]_q = q^{-\frac{n}{2}}-q^{\frac{n}{2}}\,,
    &&
 [\lambda]_q = \prod_{j=1}^{\ell(\lambda)} [\lambda_j]_q\,,
    &&
 \mathfrak{z}_{\vec{\mu}}=\prod_{\alpha =1}^{L}\mathfrak{z}_{\mu^\alpha}\,,
\\
 &
 |\vec{A}| = ( |A^{1}|,...,|A^{L}|)\,,
    &&
 \parallel\vec{A}\parallel =\sum_{\alpha=1}^{L}|A^\alpha|\,,
    &&
 \ell(\vec{\mu}) = \sum_{\alpha=1}^{L}\ell (\mu ^\alpha) \,,
\\
 &
 \vec{A}^{t} =\big( ( A^{1})^t,\ldots,(A^{L})^{t}\big)\,,
    &&
 \chi _{\vec{A}}\left( \vec{\mu}\right)=\prod_{\alpha =1}^{L}
    \chi _{A^{\alpha }}( C_{\mu ^\alpha})\,,
    &&
 s_{\vec{A}}(x)=\prod_{\alpha=1}^{L}s_{A^\alpha}(x^{\alpha }) \,.
\end{align*}
Denote by
\begin{align} \label{eqn: formula of 1^mu}
 1^{\vec{\mu}}
=(1^{|\mu^1|}, \cdots, 1^{|\mu^L|})\,.
\end{align}

\emph{Chern-Simons partition function} can be defined to be the following
generating function of quantum group invariants of $\mathcal{L}$,
\[
 Z_{\mathrm{CS}}(\mathcal{L})
=1+\sum_{\vec{A}\neq 0}W_{\vec{A}}( \mathcal{L};\,q,t) s_{\vec{A}}(x).
\]
Define \emph{free energy}
\begin{equation}\label{eqn: def of F_mu}
F=\log Z=\sum_{\vec{\mu}\neq 0}F_{\vec{\mu}}p_{\vec{\mu}}\left( x%
\right) .
\end{equation}
Here in the similar usage of notation,
\[
 p_{\vec{\mu}}(x)=\prod_{\alpha=1}^L p_{\mu^\alpha}(x^\alpha)\,.
\]

We rewrite Chern-Simons partition function as
\[
 Z_{\mathrm{CS}}(\mathcal{L})
=1+\sum_{\vec{\mu}\neq 0}Z_{\vec{\mu}}p_{\vec{\mu}}(x)
\]
where
\begin{equation}\label{eqn: Z_mu of W_A}
 Z_{\vec{\mu}}
=\sum_{\vec{A}}\frac{\chi_{\vec{A}}(\vec{\mu})}{\mathfrak{z}_{\vec{\mu}}}
 W_{\vec{A}}.
\end{equation}

By Lemma \ref{lemma: taylor expansion}, we have
\begin{equation}\label{eqn: F_mu of Z_mu}
 F_{\vec{\mu}}
=\sum_{\Lambda \in \mathcal{P}(\mathcal{P}^{L}),\,
    |\Lambda | =\vec{\mu}\in \mathcal{P}^{L}}
 \theta_{\Lambda }Z_{\Lambda } \,.
\end{equation}

\subsection{Main results}\label{subsec: main results}

\subsubsection{Two theorems that answer LMOV conjecture}

Let $P_{\vec{B}}(q,t)$ be the function defined by (\ref{eqn: f_A of P_B}),
which can be determined by the following formula,
\[
 P_{\vec{B}}(q,t)
=\sum_{|\vec{A}|=|\vec{B}|}f_{\vec{A}}(q,t)
 \prod_{\alpha=1}^{L}
 \sum_{\mu} \frac{\chi_{A^\alpha}(C_\mu)\chi_{B^\alpha}(C_\mu)}
 {\mathfrak{z}_{\mu }}
 \prod_{j=1}^{\ell(\mu^\alpha)}
 \frac{1}{q^{-\mu_j/2}-q^{\mu_j/2}}\,.
\]

\begin{Theorem}\label{thm: existence}
There exist topological invariants $N_{\vec{B};\,g,Q}\in \mathbb{Q}$ such that
expansion $(\ref{eqn: P_B})$ holds.
\end{Theorem}

\begin{Theorem}\label{thm: integrality}
Given any $\vec{B}\in\mathcal{P}^L$,
the generating function of $N_{\vec{B};\,g,Q}$,
$P_{\vec{B}}(q,t)$, satisfies
\[
 (q^{-1/2}-q^{1/2})^2 P_{\vec{B}}(q,t) \in
 \mathbb{Z}\big[\big( q^{-1/2}-q^{1/2}\big)^2,t^{\pm 1/2}\big] \,.
\]
\end{Theorem}

It is clear that Theorem \ref{thm: existence} and \ref{thm:
integrality} answered LMOV conjecture. Moreover, Theorem \ref{thm:
integrality} implies, for fixed $\vec{B}$, $N_{\vec{B};\,g,Q}$
vanishes at large genera.

The method in this paper may apply to the general complex simple Lie algebra
$\mathfrak{g}$ instead of only considering $\fsl(N,\mathbb{C})$. We will put
this in our future research. If this is the case, it might require a more
generalized duality picture in physics which will definitely be very interesting
to consider and reveal much deeper relation between Chern-Simons gauge theory
and geometry of moduli space. The extension to some other gauge group has
already been done in \cite{Sinha-Vafa} where non-orientable Riemann surfaces is
involved. For the gauge group $SO(N)$ or $Sp(N)$, a more complete picture had
appeared in the recent work of \cite{BFM1,BFM2} and therein a BPS structure of
the colored Kauffman polynomials was also presented. We would like to see that
the techniques developed in our paper extend to these cases.

The existence of (\ref{eqn: P_B}) has its deep root in the duality
between large $N$ Chern-Simons/topological string duality. As
already mentioned in the introduction, by the definition of quantum
group invariants, $P_{\vec{B}}(q,t)$ might have very high order of
pole at $q=1$, especially when the degree of $\vec{B}$ goes higher
and higher. However, LMOV-conjecture claims that the pole at $q=1$
is at most of order $2$ for any $\vec{B}\in \mathcal{P}^{L}$. Any
term that has power of $q^{-1/2}-q^{1/2}$ lower than $-2$ will be
canceled! Without the motivation of Chern-Simons/topological string
duality, this mysterious cancelation is hardly able to be realized
from knot-theory point of view.

\subsubsection{An application to knot theory}
\label{subsec: application to knot theory}

We now discuss applications to knot theory, following \cite{LMV,LM}.

Consider associating the fundamental representation to each component of the
given link $\mathcal{L}$. As discussed above, the quantum group invariant of
$\mathcal{L}$ will reduce to the classical HOMFLY polynomial
$P_{\mathcal{L}}(q,t)$ of $\mathcal{L}$ except for a universal factor. HOMFLY
has the following expansion
\begin{equation}\label{eqn: formula of HOMFLY}
 P_{\mathcal{L}}(q,t)
=\sum_{g\geq 0}p_{2g+1-L}(t)(q^{-\frac{1}{2}}-q^{\frac{1}{2}})^{2g+1-L}\, .
\end{equation}
The lowest power of $q^{-1/2}-q^{1/2}$ is $1-L$, which was proved in
\cite{LiMi} (or one may directly derive it from Lemma
\ref{lemma: quotient of knot inv}).
After a simple algebra calculation, one will have
\begin{align}\label{eqn: connected homfly expansion}
  F_{(\partition{1},\ldots,\partition{1})}
= \bigg( \frac{t^{-1/2}-t^{1/2}}{q^{-1/2}-q^{1/2}} \bigg)
    \sum_{g\geq 0} \widetilde{p}_{2g+1-L}(t)
    (q^{-\frac{1}{2}}-q^{\frac{1}{2}})^{2g+1-L} \, .
\end{align}
Lemma \ref{lemma: degree} states that
\[
 \widetilde{p}_{1-L}(t)=\widetilde{p}_{3-L}(t)
=\cdots=\widetilde{p}_{L-3}(t)=0 \, ,
\]
which implies that the $p_k(t)$ are completely determined by the HOMFLY
polynomial of its sub-links for $k=1-L, 3-L, \ldots, L-3$.

Now, we only look at $\widetilde{p}_{1-L}(t)=0$. A direct comparison of the
coefficients of $F=\log Z_{\mathrm{CS}}$ immediately leads to the following
theorem proved by Lickorish and Millett \cite{LiMi}.

\begin{theorem}[Lickorish-Millett]
Let $\mathcal{L}$ be a link with $L$ components. Its HOMFLY polynomial
\[
 P_{\mathcal{L}}(q,t)
=\sum_{g\geq 0} p^{\mathcal{L}}_{2g+1-L}(t)
    \Big( q^{-\frac{1}{2}}-q^{\frac{1}{2}} \Big)^{2g+1-L}
\]
satisfies
\[
 p_{1-L}^{\mathcal{L}}(t)
=t^{-\lk}
    \Big( t^{-\frac{1}{2}}-t^{\frac{1}{2}} \Big)^{L-1}
    \prod_{\alpha=1}^L p_0^{\mathcal{K}_\alpha}(t)
\]
where $p_0^{\mathcal{K}_\alpha}(t)$ is HOMFLY polynomial of the $\alpha$-th
component of the link $\mathcal{L}$ with $q=1$.
\end{theorem}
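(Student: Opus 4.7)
The plan is to follow the hint just above the theorem: extract the coefficient of $p_{(1)}(x^1)\cdots p_{(1)}(x^L)$ from $F=\log Z_{\mathrm{CS}}$ --- namely $F_{(\partition{1},\ldots,\partition{1})}$ --- and compare its leading singular part in $(q^{-1/2}-q^{1/2})$ with the vanishing $\widetilde{p}_{1-L}(t)=0$ supplied by the cited Lemma \ref{lemma: degree}.

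First I apply $\log(1+w)=\sum_{k\geq 1}(-1)^{k-1}w^k/k$ to $Z_{\mathrm{CS}}-1=\sum_{\vec{\nu}\neq 0}Z_{\vec{\nu}}p_{\vec{\nu}}(x)$ and read off the coefficient of $\prod_\alpha p_{(1)}(x^\alpha)$. In the $k$-th summand only tuples $(\vec{\nu}_1,\ldots,\vec{\nu}_k)$ whose $\cup$ equals $((1),\ldots,(1))$ survive; these correspond bijectively to ordered set-partitions $(S_1,\ldots,S_k)$ of $\{1,\ldots,L\}$, with $Z_{\vec{\nu}_j}=W_{(\partition{1})^{|S_j|}}(\mathcal{L}_{S_j})$, the quantum invariant of the sublink carried by $S_j$. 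Collapsing the $k!$ orderings yields
$$F_{(\partition{1},\ldots,\partition{1})}=\sum_{\pi\in\Pi(L)}(-1)^{|\pi|-1}(|\pi|-1)!\prod_{B\in\pi}W_{(\partition{1})^{|B|}}(\mathcal{L}_B),$$
where $\Pi(L)$ is the set-partition lattice of $\{1,\ldots,L\}$.

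Next I substitute the HOMFLY identity $W_{(\partition{1})^{|B|}}(\mathcal{L}_B)=t^{\lk(\mathcal{L}_B)}\frac{t^{-1/2}-t^{1/2}}{q^{-1/2}-q^{1/2}}P_{\mathcal{L}_B}(q,t)$ from Example \ref{example of quantum group invs}(2). By \eqref{eqn: formula of HOMFLY} its lowest-order term in $q^{-1/2}-q^{1/2}$ sits at exponent $-|B|$ with coefficient
$$\psi(\mathcal{L}_B)\ :=\ t^{\lk(\mathcal{L}_B)}(t^{-1/2}-t^{1/2})\,p^{\mathcal{L}_B}_{1-|B|}(t).$$
Since $\sum_{B\in\pi}|B|=L$, each product $\prod_B W_{(\partition{1})^{|B|}}(\mathcal{L}_B)$ has leading order $(q^{-1/2}-q^{1/2})^{-L}$ with coefficient $\prod_{B\in\pi}\psi(\mathcal{L}_B)$. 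The coefficient of $(q^{-1/2}-q^{1/2})^{-L}$ in $F_{(\partition{1},\ldots,\partition{1})}$ equals $(t^{-1/2}-t^{1/2})\widetilde{p}_{1-L}(t)$ by \eqref{eqn: connected homfly expansion}, and this vanishes by Lemma \ref{lemma: degree}. Applied to every sublink of size $\geq 2$, this gives the universal identity
$$\sum_{\pi\in\Pi(S)}(-1)^{|\pi|-1}(|\pi|-1)!\prod_{B\in\pi}\psi(\mathcal{L}_B)=0\qquad \text{for every }S\subseteq\{1,\ldots,L\}\text{ with }|S|\geq 2.$$

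Finally I invoke Mobius inversion on the set-partition lattice (equivalently, the exponential formula). Setting $g(S):=\sum_{\pi\in\Pi(S)}(-1)^{|\pi|-1}(|\pi|-1)!\prod_B\psi(\mathcal{L}_B)$, the above shows $g(S)=0$ whenever $|S|\geq 2$, while trivially $g(\{\alpha\})=\psi(\mathcal{K}_\alpha)$. Inverting yields $\psi(\mathcal{L})=\sum_{\pi\in\Pi(L)}\prod_{B\in\pi}g(B)=\prod_{\alpha=1}^L\psi(\mathcal{K}_\alpha)$. Unfolding the definition of $\psi$ (using $\lk(\mathcal{K}_\alpha)=0$) and dividing by $t^{-1/2}-t^{1/2}$ gives exactly the Lickorish--Millett formula. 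The genuine mathematical content is concentrated in the cited vanishing Lemma \ref{lemma: degree}; assuming it, the proof reduces to a clean exponential/Mobius reorganization, which I expect to be the routine part, with the vanishing lemma being the real obstacle.
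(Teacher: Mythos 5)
Your proposal is correct and follows essentially the same route as the paper: the paper also derives the theorem by extracting the coefficient of $\prod_\alpha p_1(x^\alpha)$ from $F=\log Z_{\mathrm{CS}}$, invoking the expansion \eqref{eqn: connected homfly expansion} and the vanishing $\widetilde{p}_{1-L}(t)=0$ from Lemma \ref{lemma: degree}, and then "directly comparing coefficients." You have merely made explicit the set-partition M\"obius inversion that the paper leaves as that one-line comparison, and you correctly identify Lemma \ref{lemma: degree} as carrying the real content.
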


In \cite{LiMi}, Lickorish and Millett obtained the above theorem by
skein analysis. Here as the consequence of higher order cancelation
phenomenon, one sees how easily it can be achieved. Note that we
only utilize the vanishing of $\widetilde{p}_{1-L}$. If one is ready
to carry out the calculation of more vanishing terms, one can
definitely get much more information about algebraic structure of
HOMFLY polynomial. Similarly, a lot of deep relation of quantum
group invariants can be obtained by the cancelation of higher order
poles.

\subsubsection{Geometric interpretation of the new integer invariants}
\label{subsec: geometric interpretation}

The following interpretation is taken in physics literature from string
theoretic point of view \cite{LMV,OV}.

Quantum group invariants of links can be expressed as vacuum expectation value
of Wilson loops which admit a large $N$ expansion in physics. It can also be
interpreted as a string theory expansion. This leads to a geometric description
of the new integer invariants $N_{\vec{B};\,g,Q}$ in terms of open Gromov-Witten
invariants (also see \cite{LM} for more details).

The geometric picture of $f_{\vec{A}}$ is proposed in \cite{LMV}. One can
rewrite the free energy as
\begin{equation}\label{eqn: open GW side free energy}
 F
=\sum_{\vec{\mu}}
  \sqrt{-1}^{\ell(\vec{\mu})} \sum_{g= 0}^\infty
  \lambda^{2g-2+\ell(\vec{\mu})} F_{g,\vec{\mu}}(t)
 p_{\vec{\mu}} \,.
\end{equation}
The quantities $F_{g,\vec{\mu}}(t)$ can be interpreted in terms of the
Gromov-Witten invariants of Riemann surface with boundaries. It was conjectured
in \cite{OV} that for every link $\mathcal{L}$ in $S^3$, one can canonically
associate a lagrangian submanifold $\mathcal{C}_{\mathcal{L}}$ in the
\emph{resolved conifold}
\[
 \mathcal{O}(-1)\oplus\mathcal{O}(-1)\rightarrow \mathbb{P}^1\,.
\]
The first Betti number $b_1(\mathcal{C}_{\mathcal{L}})=L$, the number of
components of $\mathcal{L}$. Let $\gamma_\alpha$, $\alpha=1,\ldots,L$, be
one-cycles representing a basis for $H_1(\mathcal{C}_\mathcal{L},\,\mathbb{Z})$.
Denote by $\mathcal{M}_{g,h,Q}$ the moduli space of Riemann surfaces of genus
$g$ and $h$ holes embedded in the resolved conifold. There are $h_\alpha$ holes
ending on the non-trivial cycles $\gamma_\alpha$ for $\alpha=1,\ldots,L$. The
product of symmetric groups
\[
 \Sigma_{h_1}\times \Sigma_{h_2}\times \cdots \times \Sigma_{h_L}
\]
acts on the Riemann surfaces by exchanging the $h_\alpha$ holes that end on
$\gamma_\alpha$. The integer $N_{\vec{B};\,q,t}$ is then interpreted as
\begin{equation}\label{eqn: geometric interpretation of N_B}
 N_{\vec{B};\,q,t}
=\chi(\mathbf{S}_{\vec{B}} (H^{\ast}(\mathcal{M}_{g,h,Q}) ) )
\end{equation}
where
$
 \mathbf{S}_{\vec{B}}
=\mathbf{S}_{B^1} \otimes \cdots \otimes \mathbf{S}_{B^L},
$
and $\mathbf{S}_{B^\alpha}$ is the Schur functor.

The recent progress in the mathematical definitions of open Gromov-Witten
invariants \cite{Katz-Liu, Li-Song, Liu-Yau} may be used to put the above
definition on a rigorous setting.

\section{Hecke algebra and cabling} \label{sec: Hecke algebra}

\subsection{Centralizer algebra and Hecke algebra representation}

We review some facts about centralizer algebra and Hecke algebra
representation and their relation to the representation of braid group.

Denote by $V$ the fundamental representation of $U_q (\fsl_N)$\footnote{We
will reserve $V$ to denote the fundamental representation of $U_q (\fsl_N)$
from now on.}. Let
\[
 \{K_i^{\pm 1},E_i,F_i:\, 1\leq i\leq N-1\}
\]
be the standard generators of the quantized universal enveloping algebra
$U_q (\fsl_N)$. Under a suitable basis $\left\{ X_{1},...,X_{N}\right\} $ of
$V$, the fundamental representation is given by the following matrices
\begin{align*}
  E_{i} & \longmapsto  E_{i,i+1}
\\
  F_{i} & \longmapsto  E_{i+1,i}
\\
  K_{i} & \longmapsto  q^{-1/2}E_{i,i}+q^{1/2}E_{i+1,i+1}
                    + \sum_{i\neq j}E_{jj}
\end{align*}
where $E_{i,j}$ denotes the $N\times N$ matrix with $1$ at the
$(i,j)$-position and $0$ elsewhere. Direct calculation shows
\begin{equation}\label{eqn: action of K_2rho}
 K_{2\rho}(X_{i})=q^{-\frac{N+1-2i}{2}}X_{i}
\end{equation}
and
\[
 q^{-\frac{1}{2N}}\check{\mathcal{R}}(X_{i}\otimes X_{j})
=\left\{
 \begin{array}{ll}
    q^{-1/2}X_{i}\otimes X_{j}, & i=j\, , \\
    X_{j}\otimes X_{i}, & i<j\, , \\
    X_{j}\otimes X_{i}+(q^{-1/2}-q^{1/2})X_{i}\otimes X_{j},  & i>j\, .
 \end{array}
 \right.
\]

The \emph{centralizer algebra} of $V^{\otimes n}$ is defines as
follows
\[
 \mathcal{C}_{n}
=\End_{U_q (\fsl_N)}(V^{\otimes n})
=\left\{ x\in\End( V^{\otimes n}) :
 \, xy=yx,\forall y\in U_q (\fsl_N)\right\} \,.
\]

\emph{Hecke algebra} $\mathcal{H}_n(q)$ of type $A_{n-1}$ is the complex algebra
with $n-1$ generators $g_{1},...,g_{n-1}$, together with the following defining
relations
\begin{align*}
  &  g_{i}g_{j}=g_{j}g_i\,,
    && | i-j| \geq 2
\\
  &  g_{i}g_{i+1}g_{i}=g_{i+1}g_{i}g_{i+1}\,,
    && i=1,2,...,n-2,
\\
  &  (g_{i}-q^{-1/2})(g_{i}+q^{1/2})=0\,,
    && i=1,2,...,n-1.
\end{align*}
\begin{remark}\label{remark: Hecke algebra definition}
Here we use $q^{-1/2}$ instead of $q$ to adapt to our notation in the definition
of quantum group invariants of links. Note that when $q=1$, the Hecke algebra
$\mathcal{H}_n(q)$ is just the group algebra $\mathbb{C}\Sigma_n$ of symmetric
group $\Sigma_n$. When $N$ is large enough, $\mathcal{C}_n$ is isomorphic to
the Hecke algebra $\mathcal{H}_n(q)$.
\end{remark}

A very important feature of the homomorphism
\begin{align*}
 h:\,\mathbb{C}\mathcal{B}_{n}\longrightarrow \mathcal{C}_{n}
\end{align*}
is that $h$ factors through
$\mathcal{H}_{n}(q)$ via
\begin{align}\label{h factors through Hecke algebra}
 q^{-\frac{1}{2N}}\sigma_i \mapsto g_i \mapsto
 q^{-\frac{1}{2N}}h(\sigma_i) \, .
\end{align}

It is well-known that the irreducible modules $S^{\lambda }$
(\emph{Specht module})
of $\mathcal{H}_{n}(q)$ are in one-to-one correspondence to the partitions
of $n$.

Any permutation $\pi$ in symmetric group $\Sigma_n$
can express as a product of transpositions
\[
  \pi =s_{i_{1}}s_{i_{2}}\cdots s_{i_l}\,.
\]
If $l$ is minimal
in possible, we say $\pi $ has length $\ell(\pi)=l$ and
\[
 g_\pi =g_{i_{1}}g_{i_{2}}\cdots g_{i_{l}}\,.
\]
It is not difficult to see that $g_\pi$ is well-defined.
All of such $\{g_\pi\}$ form a basis of $\mathcal{H}_n(q)$.

Minimal projection $\fS$ is an element in $\mathcal{C}_{n}$ such that $\fS V^{\otimes
n}$ is some irreducible representation $\fS_{\lambda }$. We denote it by
$p_{\lambda }$. The minimal projections of Hecke algebras are well studied (for
example \cite{Gy}), which is a $\mathbb{Z}(q^{\pm 1})$-linear combination
$\{q^{\frac{1}{2}} g_i\}$.

\subsection{Quantum dimension}
\label{subsec: quantum dimension}

\subsubsection{Explicit formula}

An explicit formula for quantum dimension of any irreducible representation of
$U_q(\fsl_N)$ can be computed via decomposing $V^{\otimes n}$ into permutation
modules.

A \emph{composition} of $n$ is a sequence of non-negative integer
\[
 \mathfrak{b}=(\mathfrak{b}_1,\mathfrak{b}_2,\ldots)
\]
such that
\[
 \sum_{i\geq 1}\mathfrak{b}_i=n \,.
\]
We will write it as $\mathfrak{b}\vDash n$. The largest $j$ such that
$\mathfrak{b}_j\neq 0$ is called the \emph{end} of $\mathfrak{b}$ and denoted by
$\ell(\mathfrak{b})$.

Let $\mathfrak{b}$ be a composition such that $\ell(\mathfrak{b})\leq N$. Define
$M^{\mathfrak{b}}$ to be the subspace of $V^{\otimes n}$ spanned by the vectors
\begin{equation}\label{basis of V}
 X_{j_1}\otimes\cdots \otimes X_{j_n}
\end{equation}
such that $X_i$ occurs precisely $\mathfrak{b}_i$ times. It is clear that
$M^{\mathfrak{b}}$ is an $\mathcal{H}_n(q)$-module and is called
\emph{permutation module}. Moreover, by explicit matrix formula of
$\{E_i,F_i,K_i\}$ acting on $V$ under the basis $\{X_i\}$, we have
\[
 M^{\mathfrak{b}}
=\{ X\in V^{\otimes n}:\,
    K_i(X)=q^{-\frac{\mathfrak{b}_i-\mathfrak{b}_{i+1}}{2}}X \} \,.
\]
The following decomposition is very useful
\[
 V^{\otimes n}
=\bigoplus_{\mathfrak{b}\vDash n,\, \ell(\mathfrak{b})\leq N}
    M^{\mathfrak{b}} \,.
\]

Let $A$ be a partition and $V_A$ the irreducible representation labeled by
$A$. The \emph{Kostka number} $K_{A\mathfrak{b}}$ is defined to be the weight of
$V_A$ in $M^{\mathfrak{b}}$, i.e.,
\[
 K_{A\mathfrak{b}}
=\dim (V_A \cap M^{\mathfrak{b}}) \,.
\]
Schur function has the following formulation through Kostka numbers
\[
 s_A(x_1,\ldots, x_N)
=\sum_{\mathfrak{b}\vDash |A|,\, \ell(\mathfrak{b})\leq N}
 K_{A\mathfrak{b}} \prod_{j=1}^N x_j^{\mathfrak{b}_j} \,.
\]

By (\ref{eqn: action of K_2rho}), $K_{2\rho}$ is acting on $M^{\mathfrak{b}}$ as
a scalar $\prod_{j=1}^N q^{-\frac{1}{2}(N+1-2j)\mathfrak{b}_j}$. Thus
\begin{align}
  \dim_q V_A
&= \tr_{V_A}{\id_{V_A}} \nonumber
\\
&= \sum_{\mathfrak{b}\vDash |A|} \dim (V_A \cap M^{\mathfrak{b}})
    \prod_{j=1}^N q^{-\frac{1}{2}(N+1-2j)\mathfrak{b}_j}
    \nonumber
\\
&= s_A\Big(q^{\frac{N-1}{2}},\ldots, q^{-\frac{N-1}{2}}\Big)
    \label{eqn: quantum dimension as Schur function}
\end{align}
By \eqref{eqn: schur function},
\begin{align}
\dim_q V_A
&= \sum_{|\mu|=|A|} \frac{\chi_A(C_\mu)}{\mathfrak{z}_\mu}
    p_\mu\Big(q^{\frac{N-1}{2}},\ldots, q^{-\frac{N-1}{2}}\Big) \nonumber
\\
&= \sum_{|\mu|=|A|} \frac{\chi_A(C_\mu)}{\mathfrak{z}_\mu}
    \prod_{j=1}^{\ell(\mu)}
    \frac{t^{-\mu_j/2}-t^{\mu_j/2}}{q^{-\mu_j/2}-q^{\mu_j/2}}
    \label{eqn: formula of quantum dimension} \,.
\end{align}
Here in the last step, we use the substitution $t=q^N$.

\subsubsection{An expansion of the Mari\~no-Vafa formula}
\label{subsec: mv-formula}

Here we give a quick review about Mari\~no-Vafa formula \cite{MV,
LLZ1} for the convenience of Knot theorist. For details, please
refer \cite{LLZ1}.

Let $\Mbar_{g,n}$ denote the Deligne-Mumford moduli stack of stable
curves of genus $g$ with $n$ marked points. Let
$\pi:\Mbar_{g,n+1}\to \Mbar_{g,n}$ be the universal curve, and let
$\omega_\pi$ be the relative dualizing sheaf. The Hodge bundle
$\mathbb{E}=\pi_*\omega_\pi$ is a rank $g$ vector bundle over
$\Mbar_{g,n}$. Let $s_i:\Mbar_{g,n}\to\Mbar_{g,n+1}$ denote the
section of $\pi$ which corresponds to the $i$-th marked point, and
let $\mathbb{L}_i=s_i^*\omega_\pi$. A Hodge integral is an integral
of the form
$$\int_{\Mbar_{g, n}} \psi_1^{j_1}
\cdots \psi_n^{j_n}\lambda_1^{k_1} \cdots \lambda_g^{k_g}$$ where
$\psi_i=c_1(\mathbb{L}_i)$ is the first Chern class of
$\mathbb{L}_i$, and $\lambda_j=c_j(\mathbb{E})$ is the $j$-th Chern
class of the Hodge bundle. Let
\[
 \Lambda^{\vee}_g(u)=u^g-\lambda_1 u +\cdots+(-1)^g\lambda_g
\]
be the Chern polynomial of $\mathbb{E}^\vee$, the dual of the Hodge
bundle.

Define
\begin{align*}
 \mathcal{C}_{g, \mu}(\tau)
& =  - \frac{\sqrt{-1}^{\ell(\mu)}}{|\Aut(\mu)|}
    [\tau(\tau+1)]^{\ell(\mu)-1}
    \prod_{i=1}^{\ell(\mu)}\frac{ \prod_{a=1}^{\mu_i-1} (\mu_i \tau+a)}{(\mu_i-1)!}
\\
&   \qquad
    \cdot \int_{\Mbar_{g, l(\mu)}}
    \frac{\Lambda^{\vee}_g(1)\Lambda^{\vee}_g(-\tau-1)\Lambda_g^{\vee}(\tau)}
    {\prod_{i=1}^{\ell(\mu)}(1- \mu_i \psi_i)}\,.
\end{align*}
Note that
\begin{align}
 \mathcal{C}_{0,\mu}(\tau)
& = - \frac{\sqrt{-1}^{\ell(\mu)}}{|\Aut(\mu)|}
    [\tau(\tau+1)]^{\ell(\mu)-1}
    \prod_{i=1}^{\ell(\mu)}\frac{ \prod_{a=1}^{\mu_i-1} (\mu_i \tau+a)}{(\mu_i-1)!}
    \nonumber
\\
&   \qquad
    \cdot \int_{\Mbar_{0, l(\mu)}}
    \frac{1}{\prod_{i=1}^{\ell(\mu)}(1- \mu_i \psi_i)}
    \label{eqn: C_0mu}
\,.
\end{align}
The coefficient of the leading term in $\tau$ is:
\begin{align}
 -\frac{\sqrt{-1}^{\ell(\mu)}}{|\Aut \mu|}
    \prod_j\frac{ \mu_j^{\mu_j}}{\mu_j!}\cdot |\mu|^{\ell(\mu)-3}\,.
    \label{formula: leading term in tau in C_0,mu}
\end{align}

The Mari\~no-Vafa formula gives the following identity:
\begin{align}\label{eqn: MV formula}
 \sum_{\mu}p_\mu(x) \sum_{g\geq 0} u^{2g-2+\ell(\mu)}
    \mathcal{C}_{g,\mu}(\tau)
=\log \Big( 1+ \sum_{A} s_A(q^\rho) s_A(x) \Big)
\end{align}
where $q^\rho=(q^{-1/2}, q^{-3/2},\cdots,q^{-n+1/2},\cdots)$.

Let
\[
 \log \Big( \sum_A s_A(q^\rho) s_A(y) q^{\frac{\kappa_A \tau}{2}} \Big) = \sum_{\mu}
G_\mu p_\mu.
\]
Then
\begin{align}
 G_\mu
&=\sum_{|\Lambda|=\mu} \Theta_\Lambda
\prod_{\alpha=1}^{\ell(\Lambda)}
    \sum_{A^\alpha} \frac{\chi_{A^\alpha}(\Lambda^\alpha)}{z_{\Lambda^\alpha}}
    s_{A^{\alpha}}(q^\rho) q^{\frac{\kappa_{A^\alpha}\tau}{2}}
        \label{eqn: convolution for G_mu in MV-formula}
\\
&=G_\mu(0) + \sum_{p\geq 1} \frac{(\frac{u\tau}{2})^p}{p!}
    \sum_{|\Lambda|=\mu} \Theta_\Lambda \sum_{\Omega\neq (1^\Lambda),
    \mathcal{A}} \frac{\chi_{\mathcal{A}}(\Lambda)}{z_\Lambda}
    \frac{\chi_{\mathcal{A}}(\Omega)}{z_\Omega} p_\Omega(q^\rho)
    \kappa_{\mathcal{A}}^p
        \nonumber
\\
& \qquad
    +\frac{1}{(q^{\frac{1}{2}}-q^{-\frac{1}{2}})^{|\mu|}} \sum_{p\geq 1}
    \frac{(\frac{u\tau}{2})^p}{p!} \sum_{|\Lambda|=\mu} \Theta_\Lambda
    \sum_{\mathcal{A}}
    \frac{\chi_{\mathcal{A}}(\Lambda)}{z_{\Lambda}}
    \frac{\chi_{\mathcal{A}}(1^\Lambda)}{z_{1^\Lambda}} \kappa_A^p
        \label{eqn: leading term in G_mu in MV-formula}
\end{align}
The third summand of the above formula gives the non-vanishing
leading term in $\tau$ which is equal to \eqref{formula: leading
term in tau in C_0,mu}. Therefore, we have:
\begin{align}\label{ineq: deg_u F_mu = L_mu-2 in MV-formula}
 \sum_{|\Lambda|=\mu} \Theta_\Lambda
    \sum_{\mathcal{A}}
    \frac{\chi_{\mathcal{A}}(\Lambda)}{z_{\Lambda}}
    \frac{\chi_{\mathcal{A}}(1^\Lambda)}{z_{1^\Lambda}} \kappa_A^p
\neq 0
\end{align}
for $\forall p\geq |\mu|+\ell(\mu)-2$.

\subsection{Cabling Technique}

Given irreducible representations $V_{A^{1}},...,V_{A^{L}}$ to each component of
link $\mathcal{L}$. Let $| A^{\alpha }|=d_{\alpha }$,
$\vec{d}=(d_{1},...,d_{L})$. The cabling braid of $\mathcal{L}$,
$\mathcal{L}_{\vec{d}}$, is obtained by substituting $d_\alpha$ parallel strands
for each strand of $\mathcal{K}_{\alpha }$, $\alpha=1,\ldots,L$.

Using cabling of the $\mathcal{L}$ gives a way to take trace in the vector space
of tensor product of fundamental representation. To get the original trace, one
has to take certain projection, which is the following lemma in \cite{LZ}.

\begin{lemma}[\cite{LZ}, Lemma 3.3]\label{lemma: lin-zheng}
Let $V_{i}=\fS_{i}V^{\otimes d_{i}}$ for some minimal projections,
$\fS_{i}=\fS_{j} $ if the $i$-th and $j$-th strands belong to the same knot. Then
\[
 \tr_{V_{1}\otimes \cdots V_{m}}\left( h(\mathcal{L}) \right)
=\tr_{V^{\otimes n}}(h(\mathcal{L}_{(d_{1},...,d_{L})})
 \circ \fS_{1}\otimes \cdots \otimes \fS_{n})\, .
\]
where $m$ is the number of strands belonging to $\mathcal{L}$,
$n=\sum_{\alpha=1}^{L}d_{\alpha }r_{\alpha }$, $r_{\alpha }$ is the number of
strands belong to $\mathcal{K}_\alpha$, the $\alpha$-th component of
$\mathcal{L}$.
\end{lemma}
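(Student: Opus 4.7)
The plan is to realize each irreducible module $V_i = \mathfrak{S}_i V^{\otimes d_i}$ as a $U_q(\fsl_N)$-direct summand of the ambient cabled space $V^{\otimes d_i}$, and then to trade the action of $h(\mathcal{L})$ on the (smaller) tensor product $V_1 \otimes \cdots \otimes V_m$ for the action of the cabled operator $h(\mathcal{L}_{\vec{d}})$ composed with the overall projection on the (larger) space $V^{\otimes n}$. Two ingredients from the ribbon category structure drive everything: the tensor-product axioms for $\chR$,
\[
 \chR_{U\otimes V,\,W}=(\chR_{U,W}\otimes \id)(\id\otimes \chR_{V,W}),\quad
 \chR_{U,\,V\otimes W}=(\id\otimes \chR_{U,W})(\chR_{U,V}\otimes \id),
\]
and the naturality $(g\otimes f)\chR_{U,V}=\chR_{\widetilde U,\widetilde V}(f\otimes g)$ for $U_q$-module maps $f,g$.

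Iterating the tensor axioms expresses each braiding $\chR_{V^{\otimes d_i},\,V^{\otimes d_j}}$ that appears in the cabled operator as the product of the $d_i d_j$ elementary braidings $\chR_{V,V}$ coming from the parallel strands of $\mathcal{L}_{\vec{d}}$; this is precisely the prescription defining $h(\mathcal{L}_{\vec d})$. Applying naturality with $f=\mathfrak{S}_i$ and $g=\mathfrak{S}_j$, regarded as $U_q$-module endomorphisms of $V^{\otimes d_i}$ and $V^{\otimes d_j}$ whose images are $V_i$ and $V_j$, one gets
\[
 \chR_{V_i,\,V_j}\circ(\mathfrak{S}_i\otimes \mathfrak{S}_j)
 =(\mathfrak{S}_j\otimes \mathfrak{S}_i)\circ \chR_{V^{\otimes d_i},\,V^{\otimes d_j}},
\]
with equal interpretations on both sides as operators on $V^{\otimes d_i}\otimes V^{\otimes d_j}$. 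Composing all such identities along the braid word representing $\mathcal{L}$ and using that the $\mathfrak{S}_i$ are mutually commuting idempotents on $V^{\otimes n}$, one obtains the operator identity
\[
 h(\mathcal{L})\circ(\mathfrak{S}_1\otimes\cdots\otimes \mathfrak{S}_n)
 =h(\mathcal{L}_{\vec{d}})\circ(\mathfrak{S}_1\otimes\cdots\otimes \mathfrak{S}_n)
\]
on $V^{\otimes n}$, where on the left $h(\mathcal{L})$ is extended by zero off $V_1\otimes\cdots\otimes V_m$.

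With this in hand, the trace identity is routine. The enhancement $K_{2\rho}$ is group-like, so it acts diagonally on $V^{\otimes d_i}$ and commutes with the $U_q$-module map $\mathfrak{S}_i$; hence the quantum trace over $V_i$ equals the quantum trace over $V^{\otimes d_i}$ weighted by $\mathfrak{S}_i$. Applying this componentwise and invoking cyclicity of trace together with $\mathfrak{S}_i^2=\mathfrak{S}_i$ yields
\[
 \tr_{V_1\otimes\cdots\otimes V_m}(h(\mathcal{L}))
 =\tr_{V^{\otimes n}}\bigl(h(\mathcal{L}_{\vec{d}})\circ (\mathfrak{S}_1\otimes\cdots\otimes \mathfrak{S}_n)\bigr),
\]
as desired.

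The main obstacle I anticipate is purely bookkeeping: in a nontrivial braid, a single ``cable'' (the $d_\alpha$ parallel strands replacing one strand of $\mathcal{K}_\alpha$) is interleaved with many others, and one must apply naturality consistently across the whole word while being careful that the projection $\mathfrak{S}_i$ on a given cable commutes through every cabled crossing that involves that cable. This is made rigorous using the fact that $\mathfrak{S}_i$ lies in the centralizer $\mathcal{C}_{d_i}=\End_{U_q(\fsl_N)}(V^{\otimes d_i})$, so it automatically commutes with every endomorphism arising from $U_q$-module maps on the parallel strands it governs, while naturality handles its interaction with braidings to other cables. Once the indexing is handled carefully, the identity reduces to repeated application of the two ribbon axioms above.
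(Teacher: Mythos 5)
Your argument is correct, but note that the paper itself offers no proof of this statement: it is imported verbatim as Lemma~3.3 of \cite{LZ}, so there is nothing internal to compare against. What you have written is essentially the standard cabling argument (and, in substance, the Lin--Zheng proof): decompose $\chR_{V^{\otimes d_i},V^{\otimes d_j}}$ into elementary crossings via the two tensor-product axioms to identify the compression of $h(\mathcal{L}_{\vec d})$ with $h(\mathcal{L})$, slide the idempotents through the braid word by naturality, and convert quantum traces over the summands $V_i=\fS_i V^{\otimes d_i}$ into quantum traces over $V^{\otimes d_i}$ weighted by $\fS_i$, using that $\fS_i$ commutes with the group-like element $K_{2\rho}$. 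The one step you should state more carefully is the conclusion of the sliding argument: pushing the projections through the entire braid word does not leave $\fS_1\otimes\cdots\otimes\fS_m$ fixed but replaces it by $\fS_{\pi^{-1}(1)}\otimes\cdots\otimes\fS_{\pi^{-1}(m)}$, where $\pi$ is the underlying permutation of the braid; it is exactly the hypothesis that $\fS_i=\fS_j$ whenever strands $i$ and $j$ lie on the same component (whose strands form the cycles of $\pi$) that makes the permuted product coincide with the original, after which idempotency gives $h(\mathcal{L}_{\vec d})\circ\fS=\fS\circ h(\mathcal{L}_{\vec d})\circ\fS$ and hence the compression. Attributing this to the $\fS_i$ being ``mutually commuting idempotents'' points at the wrong property; with that substitution made explicit, the proof is complete.
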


\section{Proof of Theorem \ref{thm: existence}} \label{sec: existence}

\subsection{Pole structure of quantum group invariants}

By an observation from the action of $\chR$ on $V\otimes V$, we define
\[
 \widetilde{X}_{(i_{1},...,i_{n})}
=q^{\frac{\#\{(j,k)|j<k,i_j>i_k\}}{2} }
    X_{i_{1}}\otimes \cdots \otimes X_{i_{n}}.
\]
$\{\widetilde{X}_{(i_{1},...,i_{n})}\}$ form a basis of
$V^{\otimes n}$. By \eqref{h factors through Hecke algebra},
 $\forall g_j \in \mathcal{H}_n(q)$, we have
\begin{align}
  q^{\frac{1}{2}} g_j \widetilde{X}_{(...,i_{j},i_{j+1},...)}
     =
 \left\{
    \begin{array}{ll}
  \widetilde{X}_{(...,i_{j+1},i_{j},...),}
    & i_{j}\leq i_{j+1}\, ,
\\
  q\widetilde{X}_{(...,i_{j+1},i_{j},...)}+(1-q)
    \widetilde{X}_{(...,i_{j,}i_{j+`},...),\ }
    & i_{j}\geq i_{j+1}.
    \end{array}
\right. \label{action of g_j on special basis}
\end{align}

\begin{lemma}\label{lemma: quotient of knot inv}
Let $\bigcirc$ be the unknot. Given any
$\vec{A} =(A^1,\ldots,A^L) \in \mathcal{P}^L$,
\begin{equation}\label{eqn: normalization of W_A in degree u}
 \lim_{q\rightarrow 1}
 \frac{W_{\vec{A}}(\mathcal{L};\,q,t)}{W_{\vec{A}}(\bigcirc ^{\otimes L};\,q,t)}
=\prod_{\alpha=1}^L
 \xi_{\mathcal{K}_\alpha}(t)^{d_\alpha} \,,
\end{equation}
where $|A^\alpha|=d_\alpha$, $\mathcal{K}_\alpha$ is the
$\alpha$-th component of $\mathcal{L}$,
and $\xi_{\mathcal{K}_\alpha}(t)$, $\alpha=1,\ldots,L$, are
independent of $\vec{A}$.
\end{lemma}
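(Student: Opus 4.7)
The strategy is to use cabling to reduce to a Hecke algebra trace, then exploit the degeneration of the Hecke algebra to the symmetric group algebra as $q\to 1$, and finally use the component structure of the link.

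\emph{Reduction via cabling.} First apply Lin--Zheng's Lemma to write
\[
 W_{\vec{A}}(\mathcal{L};q,t)
 = q^{d(\mathcal{L})}
 \tr_{V^{\otimes n}}\bigl(h(\mathcal{L}_{\vec{d}})\circ \fS_{\vec{A}}\bigr),
\]
where $n=\sum_\alpha d_\alpha r_\alpha$ and $\fS_{\vec{A}}$ is the tensor product of minimal projections $p_{A^\alpha}$, one on each block of $d_\alpha$ strands belonging to $\mathcal{K}_\alpha$. The disjoint unknot $\bigcirc^{\otimes L}$ corresponds to the trivial braid on each component, giving $\prod_\alpha \dim_q V_{A^\alpha}$ after taking the trace.

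\emph{Classical limit.} By \eqref{action of g_j on special basis}, at $q=1$ each $q^{1/2}g_j$ acts on the basis $\widetilde{X}$ as the transposition $P_{j,j+1}$. Absorbing into the framing prefactor the scalars $q^{1/(2N)}$ implicit in $h(\sigma_j)$ via \eqref{computation of q^d(L)} yields a clean $t$-power framing contribution and shows that $h(\mathcal{L}_{\vec{d}})$ converges at $q=1$ to the permutation $\pi_{\vec{d}}\in\Sigma_n$ realized by the closure of the cabled braid. Likewise each $p_{A^\alpha}$ tends to the classical Young symmetrizer of shape $A^\alpha$. A pole count confirms that the numerator and the denominator have matching leading singularities in $(q^{-1/2}-q^{1/2})$, so the ratio has a finite classical limit.

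\emph{Factorization and identification of $\xi_{\mathcal{K}_\alpha}$.} The crucial geometric input is that $\pi_{\vec{d}}=\bigsqcup_\alpha \pi_{\alpha,\vec{d}}$: strands arising from different components of $\mathcal{L}$ cannot be interchanged by the closure. Since $\fS_{\vec{A}}$ and the diagonal $K_{2\rho}^{\otimes n}$ decompose compatibly with this partition, the classical quantum trace factorizes as a product over components. To finish, one checks that within each component the resulting factor is exactly a $d_\alpha$-th power of some function $\xi_{\mathcal{K}_\alpha}(t)$ of $t$ alone. This follows because $\pi_{\alpha,\vec{d}}$ is a $d_\alpha$-fold parallel blow-up of the length-$r_\alpha$ cycle of $\mathcal{K}_\alpha$; in the classical limit, together with the Young symmetrizer, it yields $d_\alpha$ independent equal copies of a single-strand classical knot contribution.

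\emph{Main obstacle.} The hardest step is this last identification. While the splitting over components is geometrically transparent, verifying that after dividing by $\dim_q V_{A^\alpha}$ the $A^\alpha$-dependence collapses to only $|A^\alpha|=d_\alpha$ requires a careful analysis of how the Young symmetrizer interacts with the parallel cabled cycles and with the $K_{2\rho}$-weights, along with correct bookkeeping of the framing contribution coming from \eqref{computation of q^d(L)}.
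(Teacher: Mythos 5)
Your outline follows the paper's general trajectory (cabling via Lin--Zheng, classical limit, factorization over components), but it stops exactly where the real content of the lemma lies, and the step you defer as the ``main obstacle'' is not a bookkeeping exercise --- it is the theorem. The paper closes this gap with a device your proposal never introduces: the partial quantum trace $\mathcal{Y}=\tr_{V^{\otimes \sum_\alpha d_\alpha(r_\alpha-1)}}(\mathcal{L}_{\vec{d}})$ of \eqref{eqn: def of Y}, which is a \emph{central} element of $\End_{U_q(\fsl_N)}(V^{\otimes(d_1+\cdots+d_L)})$. By Schur's lemma it acts on the image of $p_{\vec{A}}$ as a scalar $\epsilon_{\vec{A}}$, and since $\mathcal{Y}$ is a $\mathbb{Z}[q^{\pm1}]$-matrix while $p_{\vec{A}}$ has entries in $\mathbb{Z}(q^{\pm1})$, the UFD argument forces $\epsilon_{\vec{A}}\in\mathbb{Z}[q^{\pm1}]$. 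Combining this with Lin--Zheng gives the exact identity \eqref{eqn: W_A involved with eigenvalue of Y}: the ratio $W_{\vec{A}}(\mathcal{L})/W_{\vec{A}}(\bigcirc^{\otimes L})$ equals a framing monomial times the Laurent \emph{polynomial} $\epsilon_{\vec{A}}$. This makes your asserted ``pole count'' unnecessary (and it is not obvious as stated, since both quantum traces individually involve $K_{2\rho}$-weights producing poles at $q=1$ after the substitution $t=q^N$); the finiteness of the limit is automatic.

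More importantly, centrality is also what resolves the $A^\alpha$-dependence. At $q=1$ the cabled braid degenerates so that the $d_\alpha$ parallel strands of $\mathcal{K}_\alpha$ become $d_\alpha$ \emph{disjoint} copies of $\mathcal{K}_\alpha$, and the eigenvalue of $\mathcal{Y}$ on the block $V^{\otimes d_\alpha}$ is therefore the $d_\alpha$-th power of the eigenvalue of the single-copy operator $\mathcal{Y}_0=\tr_{V^{\otimes(r_\alpha-1)}}\mathcal{K}_\alpha$, namely $P_{\mathcal{K}_\alpha}(1,t)/t^{w(\mathcal{K}_\alpha)}$. A scalar on all of $V^{\otimes d_\alpha}$ restricts to the same scalar on every irreducible summand $V_{A^\alpha}$, so the limit depends on $A^\alpha$ only through $d_\alpha$ --- there is no interaction between the Young symmetrizer and the cabled cycles to analyze, which is precisely the ``careful analysis'' your proposal leaves open. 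Setting $\xi_{\mathcal{K}_\alpha}(t)=P_{\mathcal{K}_\alpha}(1,t)$ as in \eqref{eqn: definition of xi_A} then finishes the proof. Without the centrality/Schur-lemma step your argument does not yield the $d_\alpha$-th power structure, so as written the proposal has a genuine gap rather than merely an omitted computation.
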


\begin{proof}
Choose $\beta\in\mathcal{B}_m$ such that $\mathcal{L}$ is the closure of
$\beta$, the total number of crossings of $\mathcal{L}_{\vec{d}}$ is even and
the last $L$ strands belongs to distinct $L$ components of $\mathcal{L}$. Let
$r_\alpha$ be the number of the strands which belong to $\mathcal{K}_{\alpha }$.
$n=\sum_\alpha d_\alpha r_\alpha$ is equal to the number of components in the
cabling link $\mathcal{L}_{\vec{d}}$.

Let
\begin{equation} \label{eqn: def of Y}
 \mathcal{Y}
=\tr_{V^{\otimes \sum_{\alpha=1}^L d_\alpha (r_\alpha-1)}}
    \big( \mathcal{L}_{\vec{d}}\big) \,.
\end{equation}
$\mathcal{Y}$
is both a central element of
$\End_{U_q (\fsl_N)}\big( V^{\otimes(d_{1}+...+d_{L})}\big)$
and a
$\mathbb{Z}[q^{\pm 1}]$-matrix under the
basis $\{ \widetilde{X}_{(i_1,\ldots,i_n)}\} $.

On the other hand,
\[
 p_{\vec{A}}=p_{A^{1}}\otimes ...\otimes p_{A^{L}}
\]
is a
$\mathbb{Z}\left( q^{\pm 1}\right)$-matrix under the basis
$\{ \widetilde{X}_{(i_1,\ldots, i_n)} \} $.
By Schur lemma, we have
\[
 p_{\vec{A}}\circ \mathcal{Y}=\epsilon \cdot p_{\vec{A}} \,,
\]
where $\epsilon \in \mathbb{Z}(q^{\pm 1})$ is an
eigenvalue of $\mathcal{Y}$.
Since $\mathbb{Z}[ q^{\pm 1}] $ is a UFD for
transcendental $q$, $\epsilon$ must stay in
$\mathbb{Z}[ q^{\pm 1}]$. By Lemma \ref{lemma: lin-zheng},
\begin{align}
   \tr_{V^{\otimes n}}(p_{A^1}^{\otimes r_1}\otimes\cdots
   \otimes p_{A^L}^{\otimes r_L}\circ \mathcal{L}_{\vec{d}})
&=\tr_{V^{\otimes (d_{1}+\cdots d_{L})}}
    (p_{\vec{A}}\circ\mathcal{Y}) \nonumber
\\
&=\epsilon \cdot \tr_{V^{^{\otimes (d_{1}+\cdots d_{L})}}}
    ( p_{\vec{A}}) \nonumber
\\
&=\epsilon \cdot W_{\vec{A}}( \bigcirc ^{\otimes L}) \,.
    \label{eqn: eigenvalue of Y with quantum dimension}
\end{align}
The definition of quantum group invariants of $\mathcal{L}$ gives
\begin{align}\label{eqn: W_A involved with eigenvalue of Y}
 W_{\vec{A}}(\mathcal{L};\,q,t)
=q^{\sum_\alpha \kappa_{A^\alpha} w(\mathcal{K}_\alpha)/2}
 t^{\sum_\alpha d_\alpha w(\mathcal{K}_\alpha)/2} \cdot
 \epsilon\cdot W_{\vec{A}}(\bigcirc^{\otimes L};\,q,t)
\end{align}

When $q\rightarrow 1$, $\mathcal{L}_{\vec{d}}$ reduces to an element
in symmetric group $\Sigma_{n}$ of $\parallel\vec{d}\parallel$
cycles. Moreover, when $q\rightarrow 1$, the calculation is actually
taken in individual knot component while the linking of different
components have no effect. By Example \ref{example of quantum group
invs} $(1)$ and $(3)$, we have
\[
 W_{\vec{A}}(\bigcirc^{\otimes L};\,q,t)
=\prod_{\alpha=1}^L W_{A^\alpha}(\bigcirc;\,q,t) \,.
\]
This implies
\begin{equation}\label{eqn: eigenvalue of Y as product formula}
 \lim_{q\rightarrow 1}
 \frac{W_{\vec{A}}(\mathcal{L};\,q,t)}
    {W_{\vec{A}}(\bigcirc ^{\otimes L};\,q,t)}
=\prod_{\alpha=1}^L
 \lim_{q\rightarrow 1}
 \frac{W_{A^\alpha}(\mathcal{K}_\alpha;\,q,t)}
    {W_{A^\alpha}(\bigcirc;\,q,t)} \,.
\end{equation}

Let's consider the case when $\mathcal{K}$ is a knot. $A$ is the partition of
$d$ associated with $\mathcal{K}$ and $\mathcal{K}_d$ is the cabling of
$\mathcal{K}$. Each component of $\mathcal{K}_d$ is a copy of $\mathcal{K}$.
$q\rightarrow 1$, $\mathcal{K}_d$ reduces to an element in $\Sigma_{dr}$. To
calculate the $\mathcal{Y}$, it is then equivalent to discussing $d$ disjoint
union of $\mathcal{K}$.
Say $\mathcal{K}$ has $r$ strands. Consider
\[
 \mathcal{Y}_0=\tr_{V^{\otimes(r-1)}} \mathcal{K} \,.
\]
The eigenvalue of $\mathcal{Y}_0$ is then
$\frac{P_{\mathcal{K}}(1,t)}{t^{w(\mathcal{K})}}$, where
$P_{\mathcal{K}}(q,t)$ is the HOMFLY polynomial for $\mathcal{K}$.
Denote by $\xi_{\mathcal{K}}(t)=P_{\mathcal{K}}(1,t)$, we have
\begin{align}\label{eqn: definition of xi_A}
 \lim_{q\to 1} \frac{W_A(\mathcal{K};\,q,t)}{W_A(\bigcirc;\,q,t)}
=\xi_{\mathcal{K}}(t)^{|A|}\,.
\end{align}

Combined with
\eqref{eqn: eigenvalue of Y as product formula}, the proof is completed.
\end{proof}

\subsection{Symmetry of quantum group invariants}

Define
\[
 \phi _{\vec{\mu}}\left( q\right)
=\prod_{\alpha=1}^L\prod_{j=1}^{\ell(\mu^\alpha)}
    (q^{-\mu_j^\alpha/2}-q^{\mu_j^\alpha/2}) \,.
\]
Comparing (\ref{eqn: def of F_mu})
and (\ref{eqn: formula of f_A}), we have
\begin{align*}
   F_{\vec{\mu}}
&=\sum_{d|\vec{\mu}}\frac{1}{d}\sum_{\vec{A}}\frac{\chi_{\vec{A}}
    \left( \vec{\mu}/d\right) }{\mathfrak{z}_{\vec{\mu}/d}}
    f_{\vec{A}}\left( q^{d},t^{d}\right)
\\
&=\sum_{d|\vec{\mu}}\frac{1}{d\cdot\mathfrak{z}_{\vec{\mu}/d}}
    \sum_{\vec{A}}\chi_{\vec{A}}(\vec{\mu}/d)
    \sum_{\vec{B}}P_{\vec{B}}(q^{d},t^{d})
    \prod_{\alpha =1}^L M_{A^\alpha B^\alpha}(q^d)
    \qquad
\\
&=\sum_{d|\vec{\mu}}\frac{1}{d}\cdot
    \frac{\phi_{\vec{\mu}/d}(q^{d}) }{\mathfrak{z}_{\vec{\mu}/d}}
    \sum_{\vec{B}}\chi_{\vec{B}}(\vec{\mu}/d)P_{\vec{B}}(q^{d},t^{d})
\\
&={\phi_{\vec{\mu}}(q)}\sum_{d|\vec{\mu}} \frac{1}{d\cdot
    z_{\vec{\mu}/d} }\sum_{\vec{B}}\chi_{\vec{B}}(\vec{\mu}/d) P_{\vec{B}}(q^d,t^d)
    \,,
\end{align*}
and
\begin{align}
 \frac{F_{\vec{\mu}}}{\phi_{\vec{\mu}}(q)}
=\sum_{d|\vec{\mu}} \frac{1}{d\cdot  z_{\vec{\mu}/d} }
    \sum_{\vec{B}}\chi_{\vec{B}}(\vec{\mu}/d) P_{\vec{B}}(q^d,t^d)
    \label{eqn: F_mu of P_B}
\,.
\end{align}
Apply M\"obius inversion formula,
\begin{equation}\label{eqn: P_B of F_mu}
 P_{\vec{B}}(q,t)
=\sum_{\vec{\mu}}
  \frac{\chi_{\vec{B}}(\vec{\mu})}{\phi_{\vec{\mu}}(q)}
 \sum_{d|\vec{\mu}}
  \frac{\mu (d)}{d}F_{\vec{\mu}/d}(q^{d},t^{d})
\end{equation}
where $\mu (d)$ is the M\"obius function defined as follows
\[
  \mu(d)
= \left\{
    \begin{array}{ll}
     (-1)^{r},  &
        \textrm{if $d$ is a product of $r$ distinct prime numbers;}
     \\
     0,  &
        \textrm{otherwise.}%
    \end{array}
 \right.
\]
To prove the existence of formula (\ref{eqn: P_B}), we need to prove:

\begin{itemize}
\item Symmetry of $P_{\vec{B}}(q,t)=P_{\vec{B}}(q^{-1},t)$.

\item The lowest degree $\left( q^{-1/2}-q^{1/2}\right)$ in
$P_{\vec{B}} $ is no less than $-2$.
\end{itemize}

Combine \eqref{eqn: F_mu of P_B}, \eqref{eqn: F_mu of Z_mu} and
\eqref{eqn: Z_mu of W_A}, it's not difficult to find that the first property on
the symmetry of $P_{\vec{B}}$ follows from the following lemma.

\begin{lemma}\label{lemma: symmetry}
$W_{\vec{A}^{t}}(q,t)=(-1)^{\parallel\vec{A}\parallel}W_{\vec{A}}(q^{-1},t)$.
\end{lemma}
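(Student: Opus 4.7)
The plan is to expand $W_{\vec A}$ via Schur characters and then exploit the classical sign identity $\chi_{A^t}(C_\mu)=(-1)^{|\mu|-\ell(\mu)}\chi_A(C_\mu)$, which expresses that tensoring with the sign representation of $\Sigma_n$ swaps a partition with its conjugate. Inverting the relation \eqref{eqn: Z_mu of W_A} by character orthogonality yields
\[
W_{\vec A}(\mathcal L;q,t)=\sum_{\vec\mu,\,|\vec\mu|=|\vec A|}\chi_{\vec A}(\vec\mu)\,Z_{\vec\mu}(\mathcal L;q,t),
\]
so it suffices to understand how each $Z_{\vec\mu}$ transforms under $q\leftrightarrow q^{-1}$.

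The key intermediate claim I would prove is
\[
Z_{\vec\mu}(\mathcal L;q^{-1},t)=(-1)^{\ell(\vec\mu)}\,Z_{\vec\mu}(\mathcal L;q,t).
\]
For the unknot this is immediate from \eqref{eqn: formula of quantum dimension}: up to $\mathfrak z_{\vec\mu}$, $Z_{\vec\mu}(\bigcirc^{\otimes L})$ equals $\prod_{\alpha,j}(t^{-\mu_j^\alpha/2}-t^{\mu_j^\alpha/2})/(q^{-\mu_j^\alpha/2}-q^{\mu_j^\alpha/2})$, whose denominator supplies exactly $\ell(\vec\mu)$ sign flips under $q\leftrightarrow q^{-1}$ while the numerator is untouched. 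For a general link I would combine cabling and Lemma \ref{lemma: lin-zheng} to write $Z_{\vec\mu}$ as a quantum trace of $h(\mathcal L_{\vec d})$ on $V^{\otimes n}$ composed with a cycle-type element of the Hecke algebra, and then exploit the isomorphism $\mathcal H_n(q)\cong\mathcal H_n(q^{-1})$ defined by $g_i\mapsto -g_i$---forced by the quadratic relation $(g_i-q^{-1/2})(g_i+q^{1/2})=0$ and compatible with the braid relation. The framing prefactor $q^{d(\mathcal L)}$ in \eqref{computation of q^d(L)} behaves correctly because $\kappa_{A^t}=-\kappa_A$ by \eqref{eqn: kappa_lambda symmetry}, while the $t^{\sum|A^\alpha|w(\mathcal K_\alpha)/2}$ piece is untouched since $|A^t|=|A|$.

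Granting the intermediate claim, the lemma follows by direct substitution. The sign-character identity extended over the link components reads $\chi_{\vec A^t}(\vec\mu)=(-1)^{\|\vec\mu\|-\ell(\vec\mu)}\chi_{\vec A}(\vec\mu)$, and combined with $\|\vec\mu\|=\|\vec A\|$ on the support of the sum we obtain
\[
W_{\vec A^t}(\mathcal L;q,t)=(-1)^{\|\vec A\|}\sum_{\vec\mu}\chi_{\vec A}(\vec\mu)\,(-1)^{\ell(\vec\mu)}Z_{\vec\mu}(\mathcal L;q,t)=(-1)^{\|\vec A\|}W_{\vec A}(\mathcal L;q^{-1},t).
\]
The principal obstacle is the intermediate claim on $Z_{\vec\mu}$: conceptually it reflects the $q\leftrightarrow q^{-1}$ Hecke-algebra isomorphism swapping Specht modules $S^\lambda\leftrightarrow S^{\lambda^t}$, but verifying the exact sign $(-1)^{\ell(\vec\mu)}$ for an arbitrary link---rather than an additional sign contribution from writhes or linking numbers---requires careful bookkeeping of the framing terms in $d(\mathcal L)$ against the Hecke-generator count in the word representing $\mathcal L_{\vec d}$, together with tracking how the enhancement $K_{2\rho}$ transforms under $q\to q^{-1}$ inside the quantum trace.
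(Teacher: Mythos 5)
Your reduction is sound as far as it goes: inverting \eqref{eqn: Z_mu of W_A} by character orthogonality and applying \eqref{eqn: chi_A^t of chi_A} correctly shows that the lemma is \emph{equivalent} to the intermediate claim $Z_{\vec\mu}(\mathcal{L};q^{-1},t)=(-1)^{\ell(\vec\mu)}Z_{\vec\mu}(\mathcal{L};q,t)$, and your verification of that claim for the unknot is right. But precisely because of this equivalence, the intermediate claim carries the entire content of the lemma, and for a general link you only sketch how to prove it. The sketch names reasonable ingredients --- the isomorphism $\mathcal{H}_n(q)\cong\mathcal{H}_n(q^{-1})$ given by $g_i\mapsto -g_i$, the behaviour of $K_{2\rho}$, and the framing factor --- but the step you defer as ``careful bookkeeping'' is exactly where the difficulty lies. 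The abstract Hecke-algebra isomorphism does not by itself control the quantum trace: the trace is taken in the specific $R$-matrix representation on $V^{\otimes n}$ weighted by $K_{2\rho}$; a braid word of length $c$ acquires $(-1)^c$ under $g_i\mapsto-g_i$ with $c$ depending on the chosen braid representative of the cable; and one must show that after the framing correction the net sign is exactly $(-1)^{\ell(\vec\mu)}$, with no residual dependence on writhes or linking numbers. None of this is carried out, so the proposal is a correct reformulation plus an unproved assertion, not a proof.

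For comparison, the paper closes exactly this gap by a different device. Using Schur's lemma it writes $W_{\vec A}(\mathcal L)$ as a framing factor times an eigenvalue $\epsilon_{\vec A}$ of the partial-trace element $\mathcal Y$ times $\prod_\alpha\dim_q V_{A^\alpha}$; the $q\leftrightarrow q^{-1}$ behaviour of the quantum-dimension factor and of the framing factor (via $\kappa_{A^t}=-\kappa_A$) is elementary, and the whole problem is concentrated in showing $\epsilon_{\vec A^t}(q^{-1},t)=\epsilon_{\vec A}(q,t)$. To extract this from the HOMFLY symmetry of the cabled link --- which a priori constrains only a single sum over all $\vec A$ of fixed degree --- the paper replaces the eigenvalues of $K_{2\rho}$ by free variables $z_i^\alpha$, so that the quantum dimensions become Schur functions $s_{A^\alpha}(z^\alpha)$ and the contributions of distinct $\vec A$ can be separated by linear independence. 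If you want to complete your route, you need either this free-variable trick or an equally effective mechanism for isolating the contribution of each cycle type inside the trace of the cabled braid; without it the claim for a general link remains open.
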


\begin{proof}
The following irreducible decomposition of $U_q(\fsl_N)$ modules is well-known:
\[
 V^n = \bigoplus_{B\vdash n} d_B V_B\,,
\]
where $ d_B = \chi_B (C_{(1^n)})$.

Let $d_{\vec{A}}=\prod_{\alpha=1}^L d_{A^\alpha}$.
Combined with Lemma \ref{lemma: lin-zheng} and eigenvalue
of $\mathcal{Y}$ in Lemma \ref{lemma: quotient of knot inv} and
\eqref{computation of q^d(L)},
we have
\begin{align}
&  W_{(\partition{1},\ldots,\partition{1})} (L_{\vec{d}})
    = \sum_{|\vec{A}|=\vec{d}} W_{\vec{A}}(\mathcal{L}) d_{\vec{A}}
    \nonumber
\\
&= \sum_{|\vec{A}|=\vec{d}} d_{\vec{A}}\,
    q^{\sum_\alpha\kappa_{A^\alpha}w(\mathcal{K}_\alpha)/2}
    \cdot t^{\sum_\alpha |A^\alpha| w(\mathcal{K}_\alpha)/2}
    \epsilon_{\vec{A}} \cdot \prod_{\alpha=1}^L
    \dim_q V_{A^\alpha}\,,
    \label{eqn: homfly decomposition}
\end{align}
Where $\epsilon_{\vec{A}}$ is the eigenvalue of
$\mathcal{Y}$ on
$\bigotimes_{\alpha=1}^L V_{A^\alpha}$ as defined in the proof of Lemma
\ref{lemma: quotient of knot inv}. Here if we
change $A^\alpha$ to
$(A^\alpha)^t$, we have
$\kappa_{(A^\alpha)^t}=-\kappa_{A^\alpha}$, which is equivalent to keep
$A^\alpha$ while changing $q$ to $q^{-1}$.

Note that $W_{(\partition{1},\ldots,\partition{1})} (L_{\vec{d}})$ is
essentially a HOMFLY polynomial of $\mathcal{L}_{\vec{d}}$ by Example
\ref{example of quantum group invs}. From the expansion of HOMFLY polynomial
\eqref{eqn: formula of HOMFLY} and Example \ref{example of quantum group invs}
(2), we have
\begin{equation}\label{eqn: symmetry of homfly}
 W_{(\partition{1},\ldots,\partition{1})} (L_{\vec{d}}\,;\,q^{-1},t)
=(-1)^{\sum_\alpha |A_\alpha|} W_{(\partition{1},\ldots,\partition{1})}
(L_{\vec{d}}\,;\,q,t)\,.
\end{equation}

However, one can generalize the definition of quantum group invariants of links
in the following way. Note that in the definition of quantum group invariants,
the enhancement of $\chR$, $K_{2\rho}$, acts on $X_i$ (see
\eqref{eqn: action of K_2rho}) as a scalar $q^{-\frac{1}{2}(N+1-2i)}$. We can
actually generalize this scalar to any $z_i^\alpha$ where $\alpha$ corresponds
the strands belonging to the $\alpha$-th component (cf. \cite{T}). It's not
difficult to see that \eqref{eqn: homfly decomposition} still holds. The quantum
dimension of $V_{A^\alpha}$ thus becomes
$s_{A^\alpha}(z_1^\alpha,\ldots,z_N^\alpha)$ obtained in the same
way as \eqref{eqn: quantum dimension as Schur function}.

We rewrite the above generalized version of quantum group invariants of links
as $W_{\vec{A}}(\mathcal{L};\,q,t;z)$, where $z=\{z^\alpha\}$.
\eqref{eqn: symmetry of homfly} becomes
\begin{equation}\label{eqn: generalized symmetry of homfly}
 W_{(\partition{1},\ldots,\partition{1})} (L_{\vec{d}}\,;\,q^{-1},t;-z)
=(-1)^{\sum_\alpha |A_\alpha|}
    W_{(\partition{1},\ldots,\partition{1})}(L_{\vec{d}}\,;\,q,t;z)
\end{equation}

Now, combine \eqref{eqn: generalized symmetry of homfly},
\eqref{eqn: homfly decomposition} and \eqref{eqn: symmetry of homfly}, we obtain
\begin{align} \label{eqn: equation of symmetry of epsilon}
& \sum_{\vec{A}^t} d_{\vec{A}^t}\,
    q^{-\sum\kappa_{(A^\alpha)^t}w(\mathcal{K}_\alpha)/2}
    \cdot t^{\sum |(A^\alpha)^t| w(\mathcal{K}_\alpha)/2}
    \epsilon_{\vec{A}^t}(q^{-1};\,-z)
     \prod_{\alpha=1}^L s_{(A^\alpha)^t}(-z^\alpha)
    \nonumber
\\
& =(-1)^{\sum_\alpha |A_\alpha|} \cdot \sum_{\vec{A}} d_{\vec{A}}\,
    q^{\sum\kappa_{A^\alpha}w(\mathcal{K}_\alpha)/2}
    \cdot t^{\sum |A^\alpha| w(\mathcal{K}_\alpha)/2}
    \epsilon_{\vec{A}}(q;\,z)
    \prod_{\alpha=1}^L s_{A^\alpha}(z^\alpha)
\end{align}
Note the following facts:
\begin{align}
  s_{A^t}(-z) &=  (-1)^{\ell(A)} s_A (z)\,, \label{eqn: symmetry of Schur
function}
\\
  d_{\vec{A}^t} &= d_{\vec{A}}\,. \label{eqn: d_A equals to d_A-t}
\end{align}
where the second formula follows from
\begin{equation}\label{eqn: chi_A^t of chi_A}
 \chi_{A^t}(C_\mu) = (-1)^{|\mu|-\ell(\mu)}\chi_A(C_\mu)\,.
\end{equation}

Apply \eqref{eqn: symmetry of Schur function} and \eqref{eqn: d_A
equals to d_A-t} to \eqref{eqn: equation of symmetry of epsilon}.
Let $z^\alpha_i=q^{-\frac{1}{2}(N+1-2i)}$, then using $-z$ instead
of $z$ is equivalent to substitute $q$ by $q^{-1}$ while keeping $t$
in the definition of quantum group invariants of links. This can be
seen by comparing
\[
 p_\mu \big(z^\alpha_i=q^{-\frac{N-2i+1}{2}}\big)
=\prod_{j=1}^{\ell(\mu)}
    \frac{t^{-\mu_j/2}-t^{\mu_j/2}}{q^{-\mu_j/2}-q^{\mu_j/2}}
\]
with
\[
 p_\mu (-z^\alpha) = (-1)^{\ell(\mu)} p_\mu (z^\alpha)\,.
\]
Therefore, we have
\begin{equation} \label{eqn: symmetry of epsilon}
 \epsilon_{\vec{A}^t}(q^{-1},t)
=\epsilon_{\vec{A}}(q,t) \,.
\end{equation}

By the formula of quantum dimension, it is easy to verify that
\begin{equation} \label{eqn: symmetry of trivial link}
 W_{\vec{A}^{t}}(\bigcirc^L;\,q,t)
=(-1)^{\parallel\vec{A}\parallel}W_{\vec{A}}(\bigcirc^L;\,q^{-1},t)
\end{equation}
Combining \eqref{eqn: eigenvalue of Y with quantum dimension},
\eqref{eqn: symmetry of epsilon} and
\eqref{eqn: symmetry of trivial link},
the proof of the Lemma is then completed.
\end{proof}

By Lemma \ref{lemma: symmetry}, we have the following expansion
about $P_{\vec{B}}$
\[
 P_{\vec{B}}(q,t)
=\sum_{g\geq 0}\sum_{Q\in \mathbb{Z}/2}N_{\vec{B},g,Q}
 (q^{-1/2}-q^{1/2})^{2g-2N_{0}}t^{Q}
\]
for some $N_{0}$. We will show $N_{0}\leq 1$.

Let $q=e^{u}$. The pole order
of $( q^{-1/2}-q^{1/2}) $ in $P_{\vec{B}}$ is the same as pole
order of $u$.

Let $f(u)$ be a Laurent series in $u$. Denote $\deg _{u}f$ to be the lowest
degree of $u$ in the expansion of $u$ in $f$.

Combined with (\ref{eqn: F_mu of P_B}), $N_{0}\leq 1$ follows from the
following lemma.

\begin{lemma}\label{lemma: degree}
$\deg _{u}F_{\vec{\mu}}\geq \ell(\vec{\mu})-2$.
\end{lemma}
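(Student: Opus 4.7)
I will deduce the degree bound from the log cut-and-join equation for the framed free energy $F(\mathcal{L}; q, t, \tau)$ stated in the introduction. Extracting the coefficient of $p_{\vec{\mu}}(x)$ yields, for each $\vec{\mu}$, an ODE of the form
\[
\frac{\partial F_{\vec{\mu}}}{\partial \tau} = \frac{u}{2}\bigl( J_{\vec{\mu}} + C_{\vec{\mu}} + N_{\vec{\mu}} \bigr),
\]
where $J_{\vec{\mu}}$ (the ``joining source'' arising from $p^\alpha_{i+j}\,\partial^2/\partial p^\alpha_i\partial p^\alpha_j$) is linear in $F_{\vec{\mu}'}$ with $\ell(\vec{\mu}')=\ell(\vec{\mu})+1$, $C_{\vec{\mu}}$ (the ``cutting source'' from $p^\alpha_i p^\alpha_j\,\partial/\partial p^\alpha_{i+j}$) is linear in $F_{\vec{\mu}'}$ with $\ell(\vec{\mu}')=\ell(\vec{\mu})-1$, and $N_{\vec{\mu}}$ is bilinear in $F_{\vec{\mu}'_1}F_{\vec{\mu}'_2}$ with $|\vec{\mu}'_j|<|\vec{\mu}|$ and $\ell(\vec{\mu}'_1)+\ell(\vec{\mu}'_2)=\ell(\vec{\mu})+1$. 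In all three cases the total degree in $x$-variables is preserved, so that within the sector $|\vec{\mu}|=d$ one obtains a finite-dimensional linear ODE system coupled to a nonlinear source from strictly smaller $|\vec{\mu}'_j|$.

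The crux is the degree-count: assuming the sources satisfy $\deg_u F_{\vec{\mu}''}\geq\ell(\vec{\mu}'')-2$, a direct check gives
\[
\deg_u\bigl[(u/2)J_{\vec{\mu}}\bigr]\geq\ell(\vec{\mu}),\quad \deg_u\bigl[(u/2)C_{\vec{\mu}}\bigr]\geq\ell(\vec{\mu})-2,\quad \deg_u\bigl[(u/2)N_{\vec{\mu}}\bigr]\geq\ell(\vec{\mu})-2,
\]
with the cutting and nonlinear bounds being tight. Consequently the subset $V^0_d=\{(F_{\vec{\mu}})_{|\vec{\mu}|=d}:\deg_u F_{\vec{\mu}}\geq\ell(\vec{\mu})-2\}$ is preserved by the $\tau$-flow. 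I then induct on $|\vec{\mu}|$: the base case $|\vec{\mu}|=1$, where $\vec{\mu}$ is a single $(\partition{1})$ in one slot and $\ell(\vec{\mu})=1$, follows directly from the HOMFLY expansion \eqref{eqn: connected homfly expansion}, whose leading factor $(t^{-1/2}-t^{1/2})/(q^{-1/2}-q^{1/2})$ has a simple pole at $u=0$. In the inductive step at $|\vec{\mu}|=d$, the nonlinear source is controlled by the induction hypothesis, while the linear sources couple only within $\{F_{\vec{\mu}}\}_{|\vec{\mu}|=d}$; the $V^0_d$-invariance reduces matters to verifying the bound at one reference value of $\tau$.

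The initial condition is secured using the Mari\~{n}o-Vafa formula \eqref{eqn: MV formula}: for framed disjoint unknots $\bigcirc^L$ it yields $F_{\vec{\mu}}(\bigcirc^L;\tau)$ with leading $u$-degree exactly $\ell(\vec{\mu})-2$. For a general link $\mathcal{L}$, Lemma \ref{lemma: quotient of knot inv} together with the cabling technique (Lemma \ref{lemma: lin-zheng}) shows that $W_{\vec{A}}(\mathcal{L})$ differs from $W_{\vec{A}}(\bigcirc^L)$ only by an eigenvalue $\epsilon_{\vec{A}}\in\mathbb{Z}[q^{\pm 1}]$ and framing factors regular at $u=0$, placing the initial data into $V^0_d$. \textbf{The main obstacle} is that $\epsilon_{\vec{A}}$ depends on $\vec{A}$ in a way that does not factor through the logarithm $F=\log Z$, so a naive pointwise comparison of $F_{\vec{\mu}}(\mathcal{L})$ with $F_{\vec{\mu}}(\bigcirc^L)$ is unavailable; it is precisely the filtration-preserving $V^0_d$-flow (which only requires inequality of leading degrees, not equality) that sidesteps this difficulty and propagates the bound throughout the coupled system at fixed $|\vec{\mu}|=d$.
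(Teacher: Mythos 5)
Your bookkeeping of the cut-and-join equation (which terms join, which cut, which are nonlinear, and the resulting degree counts) is correct and matches the paper's, but the argument has a genuine gap at the initial condition, and that is exactly where all the content of the lemma lives. The $\tau$-flow only moves the framing parameter of the fixed link $\mathcal{L}$; invariance of $V^0_d$ can propagate the bound from one framing to another but cannot create it at any framing. Your only proposed source for the initial data is the comparison $W_{\vec{A}}(\mathcal{L})=(\textrm{framing factors})\cdot\epsilon_{\vec{A}}\cdot W_{\vec{A}}(\bigcirc^{\otimes L})$, and, as you yourself observe, this does not pass through $F=\log Z$; asserting that the flow ``sidesteps'' the obstacle does not make it so. The failure is concrete: for $\vec{\mu}=((1),\ldots,(1))$ one has $\kappa_{(1)}=0$ and a one-box partition admits neither cutting nor joining, so the equation reads $\partial F_{\vec{\mu}}/\partial\tau_\alpha=0$ and the flow carries no information whatsoever; yet the required bound $\deg_u F_{((1),\ldots,(1))}\geq L-2$ is precisely the nontrivial Lickorish--Millett-type vanishing (cf.\ \eqref{eqn: connected homfly expansion} with $\widetilde{p}_{1-L}=\cdots=\widetilde{p}_{L-3}=0$), which your base case $\parallel\vec{\mu}\parallel=1$ does not cover.

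The paper closes this gap with two devices absent from your proposal. First, the fundamental-color bound $\deg_u F^\circ(\mathcal{L})\geq L-2$ is proved for arbitrary links (hence for all cablings $\mathcal{L}_{\vec{\mu}}$) by a skein-relation induction on the number of components combined with the vanishing \eqref{eqn: connected homfly of disjoint links} of $F^\circ$ on split links. Second, and more importantly, the paper does not use the cut-and-join equation as a forward flow at all: after reducing to one-row colors via the cabling identity \eqref{eqn: connected colored homfly and calbing}, it reads the equation at the special partition $\mu^1=(d,1)$ \emph{backwards}, solving for the single unknown joining term $F_{((d+1),\mu^2,\ldots)}$ in terms of $\partial F_{((d,1),\mu^2,\ldots)}/\partial\tau_1$ and terms already controlled by the induction on color degree. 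For that step one must know that differentiation in $\tau_1$ does not annihilate the lowest-degree coefficient in $u$, which is exactly what the Mari\~{n}o--Vafa leading-term computation \eqref{ineq: deg_u F_mu = L_mu-2 in MV-formula} supplies. Without these inputs your induction neither starts nor reaches the one-row colors $(d+1)$.
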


Lemma \ref{lemma: degree} can be proved through the following
cut-and-join analysis.

\subsection{Cut-and-join analysis}

\subsubsection{Cut-and-join operators}

Let $\tau=(\tau_1,\cdots, \tau_L)$, substitute
\[
 W_{\vec{A}}(\mathcal{L};\,q,t;\tau)
=W_{\vec{A}}(\mathcal{L};\,q,t)\cdot
    q^{\sum_{\alpha=1}^{L}\kappa_{A^\alpha}\tau_\alpha/2}
\]
in the Chern-Simons partition function, we have the following framed partition
function
\[
 Z( \mathcal{L};\,q,t,\tau)
=1+\sum_{\vec{A}\neq 0}W_{\vec{A}}(\mathcal{L};\,q,t,\tau )\cdot
    s_{\vec{A}}(x) \,.
\]
Similarly, framed free energy
\[
 F(\mathcal{L};\,q,t,\tau)
= \log Z( \mathcal{L};\,q,t,\tau)\,.
\]
We also defined framed version of $Z_{\vec{\mu}}$ and $F_{\vec{\mu}}$ as
follows
\begin{align*}
& Z(\mathcal{L};\,q,t,\tau )
  =1+\sum_{\vec{\mu}\neq 0}Z_{\vec{\mu}}(q,t,\tau)\cdot
    p_{\vec{\mu}}(x) \,,
\\
& F(\mathcal{L};\,q,t,\tau )
  =\sum_{\vec{\mu}\neq 0}F_{\vec{\mu}}(q,t,\tau)p_{\vec{\mu}}(x)
    \,.
\end{align*}
One important fact of these framing series is that they satisfy the
cut-and-join equation which will give a good control of $F_{\vec{\mu}}$.

Define exponential cut-and-join operator $\mathfrak{E}$
\begin{equation}\label{def: exp cut-and-join operator}
  \mathfrak{E}
= \sum_{i,j\geq 1}
    \bigg(
     ijp_{i+j}\frac{\partial ^{2}}{\partial p_{i}\partial p_{j}}
    +(i+j)p_{i}p_{j}
     \frac{\partial }{\partial p_{i+j}}
    \bigg)  \,,
\end{equation}
and log cut-and-join operator $\mathfrak{L}$
\begin{equation}\label{def: log cut-and-join operator}
  \mathfrak{L}F
= \sum_{i,j\geq 1}
    \bigg(
      ijp_{i+j}\frac{\partial^{2}F}{\partial p_{i}\partial p_{j}}
     +(i+j)p_{i}p_{j}
        \frac{\partial F}{\partial p_{i+j}}
     +ijp_{i+j}
      \frac{\partial F}{\partial p_{i}}
        \frac{\partial F}{\partial p_{j}}
    \bigg) \,.
\end{equation}
Here $\{ p_i\} $ are regarded as independent variables. Schur
function $s_{A}(x)$ is then a function of $\{p_{i}\}$.

Schur function $s_{A}$ is an eigenfunction of exponential cut-and-join with
eigenvalue $\kappa _{A}$ \cite{FW,G,Z1}. Therefore,
$Z( \mathcal{L};\,q,t,\tau)$ satisfies the following exponential
cut-and-join equation
\begin{align}\label{eqn: linear cut-and-join}
 \frac{\partial Z(\mathcal{L};\,q,t,\tau )}{\partial \tau_\alpha }
=\frac{u}{2} \mathfrak{E}_{\alpha} Z(\mathcal{L};\,q,t,\tau ) \,,
\end{align}
or equivalently, we also have the log cut-and-join equation
\begin{equation}\label{eqn: non-linear cut-and-join}
 \frac{\partial F(\mathcal{L};\,q,t,\tau )}{\partial \tau_\alpha }
=\frac{u}{2} \mathfrak{L}_{\alpha} F(\mathcal{L};\,q,t,\tau )\,.
\end{equation}
In the above notation, $\mathfrak{E}_{\alpha }$ and $\mathfrak{L}_{\alpha }$
correspond to variables $\left\{ p_{i}\right\} $ which take value of
$\left\{ p_{i}(x^{\alpha })\right\} $.

\eqref{eqn: non-linear cut-and-join} restricts to $\vec{\mu}$ will
be of the following form:
\begin{equation}\label{eqn: nonlinear terms}
  \frac{\partial F_{\vec{\mu}}}{\partial \tau_\alpha }
= \frac{u}{2}
     \bigg(
    \sum_{| \vec{\nu}| =| \vec{\mu}| ,\,\ell(\vec{\nu})=\ell(\vec{\mu})\pm
1}
      \alpha _{\vec{\mu}\vec{\nu}}F_{\vec{\nu}}
    + \textrm{nonlinear terms}
     \bigg) \,,
\end{equation}
where $\alpha _{\vec{\mu}\vec{\nu}}$ is some constant, $\vec{\nu}$ is obtained
by cutting or jointing of $\vec{\mu}$.

Recall that given two partitions $A$ and $B$, we say $A$ is a
cutting of $B$ if one cuts a row of the Young diagram of $B$ into
two rows and reform it into a new Young diagram which happens to be
the Young diagram of $A$, and we also say $B$ is a joining of $A$.
For example, $(7,3,1)$ is a joining of $(5,3,2,1)$ where we join $5$
and $2$ to get $7$ boxes. Using Young diagram, it looks like
\begin{align*}
 \partition{5 3 2 1} \quad
\stackrel{\textrm{\tiny join}}{\Longrightarrow} \quad
 \partition{7 3 1} \quad
\stackrel{\textrm{\tiny cut}}{\Longrightarrow} \quad
 \partition{5 3 2 1} \,.
\end{align*}
In \eqref{eqn: nonlinear terms}, cutting and joining happens only
for the $\alpha$-th partition.

\subsubsection{Degree of $u$}

By \eqref{eqn: F_mu of Z_mu}, it is easy to see through induction
that for two links $\mathcal{L}_1$ and $\mathcal{L}_2$,
\begin{align}\label{eqn: connected homfly of disjoint links}
 F_{(\partition{1},\cdots,\partition{1})} (\mathcal{L}_1 \otimes
    \mathcal{L}_2)
=0\,.
\end{align}
For simplicity of writing, we denote by
\[
 F_{(\partition{1},\cdots,\partition{1})} (\mathcal{L})
=F^\circ(\mathcal{L})
\]

Note that when we put the labeling irreducible representation by the
fundamental ones, quantum group invariants of links reduce to HOMFLY
polynomials except for a universal factor. Therefore, if we apply
skein relation, the following version of skein relation can be
obtained. Let positive crossing
\[
 \mathcenter{\includegraphics[height=0.8cm]{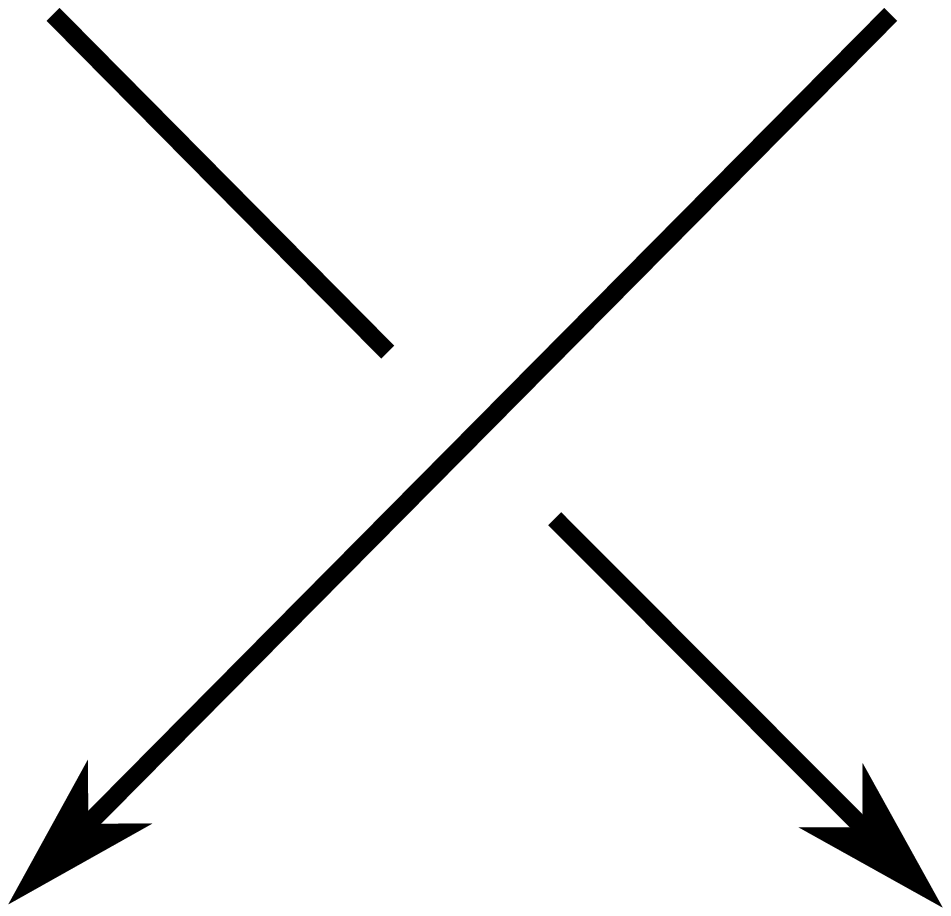}}
\]
used later appear between two different components, $\mathcal{K}_1$
and $\mathcal{K}_2$, of the link. Denote by $lk=lk(\mathcal{K}_1,
\mathcal{K}_2)$, then:
\begin{align}\label{eqn: skein relation for F}
 F^\circ\Big(\mathcenter{\includegraphics[height=0.8cm]{L+.eps}}\Big) -
    F^\circ\Big(\mathcenter{\includegraphics[height=0.8cm]{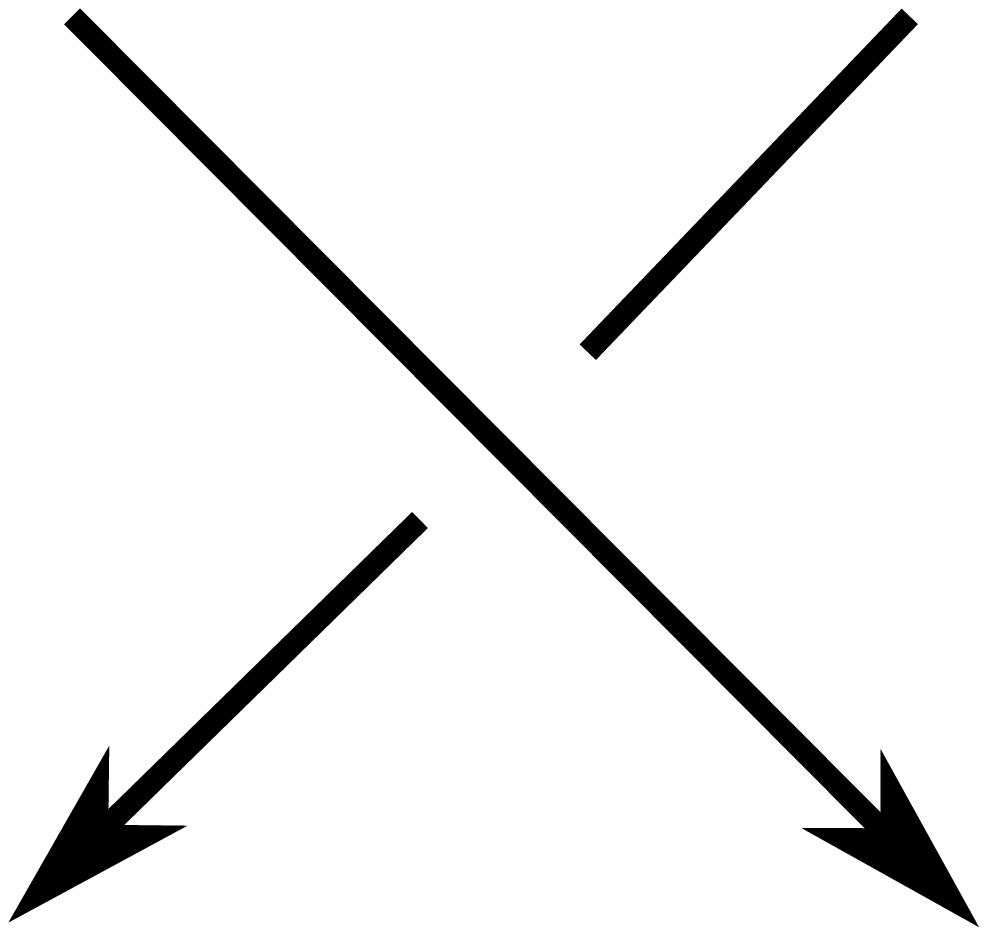}}\Big)
=t^{-lk+\frac{1}{2}} (q^{-\frac12}-q^{\frac12})
    F^\circ\Big(\mathcenter{\includegraphics[height=0.8cm]{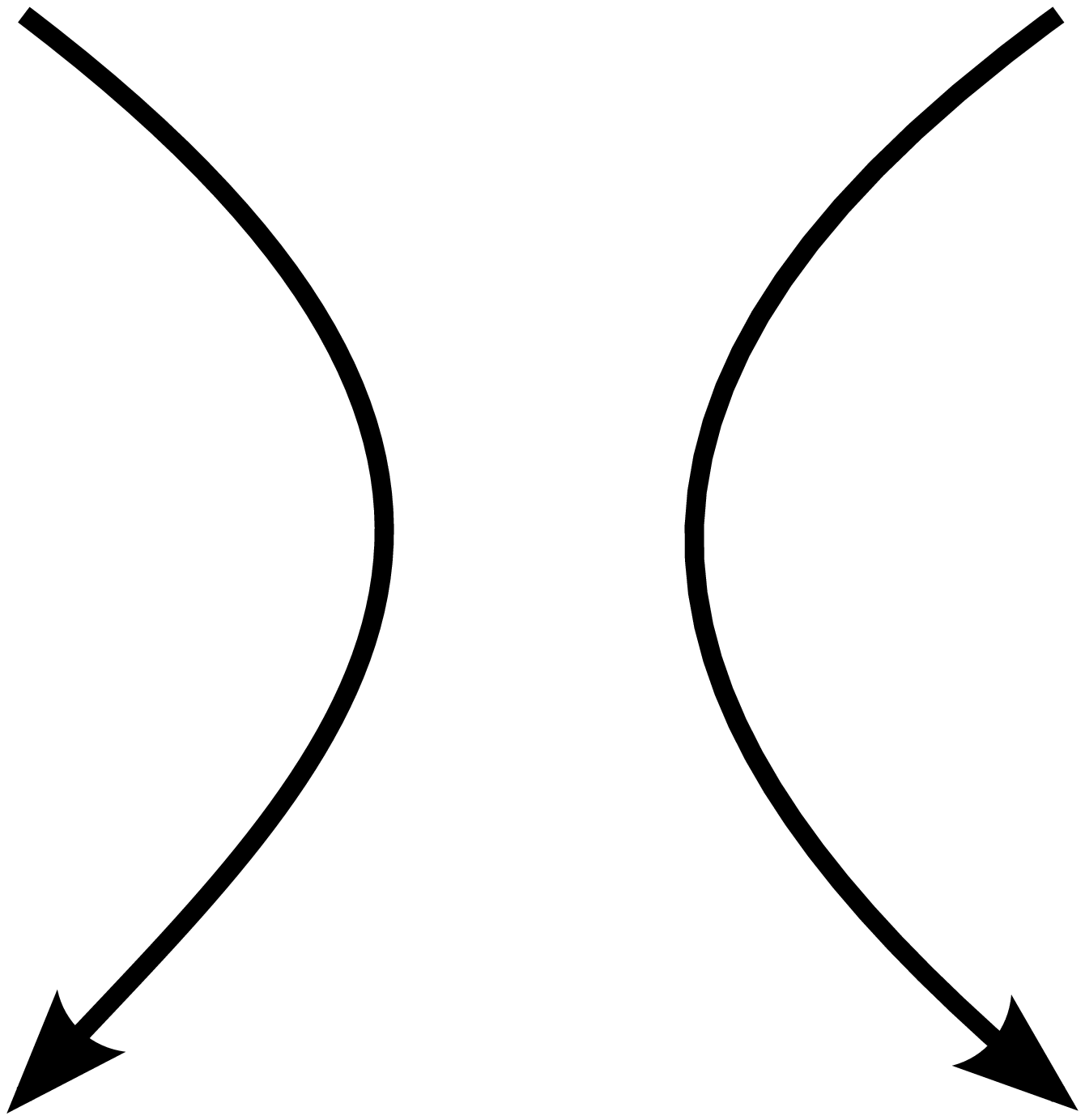}}\Big)
\end{align}

We want to claim that
\begin{align}\label{eqn: deg_u of F^circ}
 \deg_u F^\circ (\mathcal{L}) \geq L-2\,,
\end{align}
where $L$ is the number of components of $\mathcal{L}$.

Firstly, for a knot $\mathcal{K}$, a simple computation shows that:
\[
 \deg_u F_{\partition{1}} (\mathcal{K}) =-1\,.
\]

Using induction, we may assume that claim \eqref{eqn: deg_u of
F^circ} holds for $L\leq k$. When $L=k+1$, by \eqref{eqn: skein
relation for F},
\begin{align}
 \deg_u \Big(F^\circ\Big(\mathcenter{\includegraphics[height=0.8cm]{L+.eps}}\Big) -
    F^\circ\Big(\mathcenter{\includegraphics[height=0.8cm]{L-.eps}}\Big)
    \Big)
=1+\deg_u
    \Big( F^\circ\Big(\mathcenter{\includegraphics[height=0.8cm]{L0.eps}}\Big)
    \Big)
\geq k-1 \,.
\end{align}
However, if
\[
 \deg_u
    \Big(F^\circ\Big(\mathcenter{\includegraphics[height=0.8cm]{L+.eps}}\Big)
    \Big)
< k-1\,,
\]
this will imply that in the procedure of unlinking $\mathcal{L}$,
the lowest degree term of $u$ in $F^\circ$ are always the same.
However, this unlinking will lead to $F^\circ$ equal to $0$ due to
\eqref{eqn: connected homfly of disjoint links}, which is a
contradiction! This implies that if the number of components of
$\mathcal{L}_1$ is greater than the number of components of
$\mathcal{L}_2$, we always have
\begin{align*}
 \deg_u F^\circ(\mathcal{L}_1) > \deg_u F^\circ (\mathcal{L}_2)
\end{align*}

Therefore, we proved the claim \eqref{eqn: deg_u of F^circ}.

\subsubsection{Induction procedure of cut-and-join analysis}

Let
$\mu^i = (\mu^i_1, \cdots, \mu^i_{\ell_i})$.
We use symbol
\begin{align*}
    \hat{Z}_{ \vec{\mu} } =  Z_{ \vec{\mu} } \cdot \mathfrak{z}_{\vec{\mu}} ; \qquad
        \hat{F} = F_{\vec{\nu}} \cdot \mathfrak{z}_{ \vec{\mu} }\,.
\end{align*}
Notice the following fact from the definition of quantum group
invariants:
\begin{align}
    \hat{Z}_{(\mu^1,\cdots, \mu^L)} (\mathcal{L})
    &=\hat{Z}_{(\mu^1_1), \cdots, (\mu^1_{\ell_1}), \cdots, (\mu^L_1),\cdots,
        (\mu^L_{\ell_L})} (\mathcal{L}_{\vec{\mu}}) \,,
        \label{eqn: colored homfly and calbing}
\\
 \hat{F}_{(\mu^1,\cdots, \mu^L)} (\mathcal{L})
    &=\hat{F}_{(\mu^1_1), \cdots, (\mu^1_{\ell_1}), \cdots, (\mu^L_1),\cdots,
        (\mu^L_{\ell_L})} (\mathcal{L}_{\vec{\mu}})
        \label{eqn: connected colored homfly and calbing} \,.
\end{align}

Let $\tau=(\tau_1, 0,\cdots,0)$. Similar as \eqref{eqn: convolution
for G_mu in MV-formula}, we give the following formula:
\begin{align}
  F_{\vec{\mu}}
&=\sum_{|\Lambda|=\vec{\mu}} \Theta_\Lambda
    \prod_{\alpha=1}^{\ell(\Lambda)}
    \sum_{A^\alpha} \frac{\chi_{A^\alpha}(\Lambda^\alpha)}{z_{\Lambda^\alpha}}
    W_{\vec{A}}(q,t) q^{\frac{\kappa_{A^1}\tau_1}{2}}
        \nonumber
\\
&=F_{\vec{\mu}}(0) + \sum_{p\geq 1} \frac{(\frac{u\tau_1}{2})^p}{p!}
    \sum_{|\Lambda|=\vec{\mu}} \Theta_\Lambda \sum_{\Omega\neq (1^\Lambda),
    \mathcal{A}} \frac{\chi_{\mathcal{A}}(\Lambda)}{z_\Lambda}
    \frac{\chi_{\mathcal{A}}(\Omega)}{z_\Omega} Z_\Omega(0)
    \kappa_{\mathcal{A}}^p
        \nonumber
\\
& \qquad
    +Z_{1^\Lambda}(0) \sum_{p\geq 1}
    \frac{(\frac{u\tau_1}{2})^p}{p!} \sum_{|\Lambda|=\vec{\mu}} \Theta_\Lambda
    \sum_{\mathcal{A}}
    \frac{\chi_{\mathcal{A}}(\Lambda)}{z_{\Lambda}}
    \frac{\chi_{\mathcal{A}}(1^\Lambda)}{z_{1^\Lambda}} \kappa_A^p
        \label{eqn: leading term in F_mu}
\end{align}

Let $g_{\vec{\mu}}^{(\alpha)}(\tau_\alpha)$ be the degree of
$\tau_\alpha$ in the coefficient of the lowest degree of $u$ in
$F_{\vec{\mu}}$. By \eqref{ineq: deg_u F_mu = L_mu-2 in MV-formula}
and \eqref{eqn: leading term in F_mu}, if $|\mu_\alpha|>1$, we have
\[
 \deg_{\tau_\alpha} g_{\vec{\mu}}^{(\alpha)}
=|\mu^\alpha|+\ell(\mu^\alpha) -2 >0 \,.
\]

By induction, assume that if $|\mu^\alpha|\leq d$, $\deg_u
F_{\vec{\mu}} \geq \ell(\vec{\mu})-2$ and hence by \eqref{ineq:
deg_u F_mu = L_mu-2 in MV-formula}, $\deg_{\tau_\alpha}
g_{\vec{\mu}}^{(\alpha)}(\tau_\alpha) =|\mu^\alpha|+\ell(\mu^\alpha)-2$.
Combined with \eqref{eqn: connected colored homfly and calbing}, if
$|\lambda^\alpha|=d+1$ and $\ell(\lambda^\alpha)>1$, we have:
\begin{align*}
 \deg_u F_{\vec{\lambda}}=\ell(\vec{\lambda})-2\,.
\end{align*}
Without loss of generality, assume $\mu^1=(d,1)$, we will just
consider the cut-and-join equation for $\tau_1$:
\begin{align*}
 \frac{\partial F_{(\mu^1, \cdots, \mu^L)}} {\partial \tau_1}
=\frac{u}{2} \Big( d F_{\big((d+1), \mu^2 \cdots, \mu^L\big)} +
    \sum_{\ell(\vec{\nu})=\ell(\vec{\mu})+1}
    \beta_{\vec{\nu}} F_{\vec{\nu}} \Big) +\ast
\end{align*}
where $\beta_{\vec{\nu}}$ are some constants and $\ast$ represents
some non-linear terms in the cut-and-join equation. The crucial
observation of this non-linear terms is that its degree in $u$ is
equal to $\ell(\vec{\mu})-2$. Comparing the degree in $u$ on both
sides of the equation, we have:
\begin{align*}
 \deg_u F_{\big((d+1), \mu^2, \cdots, \mu^L\big)}
=1+\sum_{\alpha=2}^L \ell(\mu^\alpha) -2\,.
\end{align*}
This completes the induction. The proof of Lemma \ref{lemma: degree}
follows immediately.

    Define
        \begin{align*}
            \widetilde{F}_{\vec{\mu}} = \frac{F_{\vec{\mu}} } {\phi_{ \vec{\mu} }(q)}, \qquad \widetilde{Z}_{ \vec{\mu} }
                = \frac{ Z_{ \vec{\mu} } }{ \phi_{ \vec{\mu} }(q) }\,.
        \end{align*}
    Lemma \ref{lemma: degree} directly implies follows:
    \begin{corollary} \label{cor: tilde_F_mu}
        $\widetilde{F}$ are of the following form:
        \begin{align*}
            \widetilde{F}_{\vec{\mu}}(q,t)
            = \sum_{\textrm{finitely many } n_\alpha} \frac{ a_\alpha(t) }
                { [n_\alpha]^2 } + \textrm{ polynomial.}
        \end{align*}
    \end{corollary}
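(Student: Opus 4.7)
The plan is to derive Corollary \ref{cor: tilde_F_mu} from Lemma \ref{lemma: degree} by a partial-fraction decomposition of $\widetilde{F}_{\vec\mu}$ as a rational function in $q^{1/2}$. First I would observe that $F_{\vec\mu}(q,t)$ is a Laurent polynomial in $q^{\pm 1/2}$ and $t^{\pm 1/2}$: every $W_{\vec A}(\mathcal{L};q,t)$ is such a Laurent polynomial once the $q^{1/N}$ contributions in $q^{d(\mathcal{L})}$ cancel as in \eqref{computation of q^d(L)}, and by \eqref{eqn: Z_mu of W_A} and \eqref{eqn: F_mu of Z_mu} the coefficients $Z_{\vec\mu}$ and $F_{\vec\mu}$ are finite polynomial combinations of such invariants. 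Since $\phi_{\vec\mu}(q)=\prod_{\alpha,j}[\mu_j^\alpha]_q$ has zeros only at roots of unity $\zeta$ satisfying $\zeta^{\mu_j^\alpha}=1$ for some $\alpha,j$, the quotient $\widetilde{F}_{\vec\mu}=F_{\vec\mu}/\phi_{\vec\mu}$ is a rational function of $q^{1/2}$ whose poles lie only at such $\zeta$.

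Next, since each factor $[\mu_j^\alpha]_q=-\mu_j^\alpha u+O(u^3)$ contributes a simple zero to $\phi_{\vec\mu}$ at $u=0$, the denominator vanishes there to order exactly $\ell(\vec\mu)$; combined with the lower bound $\deg_u F_{\vec\mu}\geq \ell(\vec\mu)-2$ from Lemma \ref{lemma: degree}, this shows that $\widetilde{F}_{\vec\mu}$ has a pole of order at most $2$ at $q=1$. I would then pick $a_0(t)\in\mathbb{Q}[t^{\pm 1/2}]$ so that the principal part of $\widetilde{F}_{\vec\mu}$ at $q=1$ equals $a_0(t)/[1]_q^2$, and subtract. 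Iterating at each of the finitely many remaining zeros of $\phi_{\vec\mu}$ (i.e., at each nontrivial root of unity $\zeta$ whose order divides some $\mu_j^\alpha$) and choosing for each $\zeta$ an integer $n_\alpha$ such that $[n_\alpha]_q$ vanishes precisely at $\zeta$, I subtract off further terms $a_\alpha(t)/[n_\alpha]_q^2$ to kill the local principal parts. After finitely many such subtractions the remainder is pole-free everywhere, hence a genuine Laurent polynomial, giving the stated decomposition.

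The main obstacle is to justify that the pole order of $\widetilde{F}_{\vec\mu}$ at every nontrivial root of unity $\zeta$ is also bounded by $2$, since Lemma \ref{lemma: degree} only controls the behavior at $q=1$ directly. I would handle this by repeating the cut-and-join analysis in the local coordinate $u'=\log(q/\zeta)$ near $\zeta$; the form of the log cut-and-join equation \eqref{eqn: non-linear cut-and-join} is nearly preserved under $q\mapsto \zeta q'$, up to the explicit factor $u=\log\zeta+u'$ on the right-hand side, and the inductive degree bookkeeping on $\parallel\vec\mu\parallel$ used in proving Lemma \ref{lemma: degree} should carry through once this shift is absorbed into the induction hypothesis. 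Alternatively, one could use the M\"obius inversion formula \eqref{eqn: P_B of F_mu} to express $\widetilde{F}_{\vec\mu}$ directly in terms of the $P_{\vec B}(q^d,t^d)$; together with Theorem \ref{thm: existence}, this immediately produces a decomposition in which each denominator is of the required form $[d]_q^2=(q^{-d/2}-q^{d/2})^2$, bypassing the need to prove the pole bound at $\zeta\neq 1$ by hand.
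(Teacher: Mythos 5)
Your treatment of the pole at $q=1$ is exactly what the paper does (the paper simply asserts that Lemma \ref{lemma: degree} ``directly implies'' the corollary), and you are right to flag the behaviour at the other roots of unity as the real issue; unfortunately both of your devices for handling it fail, and there is also a false premise at the start. First, $W_{\vec{A}}(\mathcal{L};q,t)$ is \emph{not} a Laurent polynomial in $q^{\pm1/2},t^{\pm1/2}$: already for the unknot it is the quantum dimension, which by \eqref{eqn: formula of quantum dimension} (equivalently the hook--content formula) carries denominators that are products of $[n]_q$'s in the two independent variables $q,t$. Hence $F_{\vec{\mu}}$ has denominators beyond $\phi_{\vec{\mu}}(q)$, and your a priori bound on the locations and multiplicities of the poles of $\widetilde{F}_{\vec{\mu}}$ away from $q=1$ does not follow. (The step at $q=1$ itself survives, since Lemma \ref{lemma: degree} is a statement about the Laurent expansion in $u$ and needs no polynomiality; but note you must also invoke the evenness $\widetilde{F}_{\vec{\mu}}(q^{-1},t)=\widetilde{F}_{\vec{\mu}}(q,t)$ coming from Lemma \ref{lemma: symmetry} to know the order-$\le 2$ principal part has no $1/u$ term, since $a_0(t)/[1]_q^2$ can only match an even principal part.)

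Second, neither proposed fix at $\zeta\neq 1$ works. The inputs to Lemma \ref{lemma: degree} --- the Mari\~{n}o--Vafa leading-term computation \eqref{formula: leading term in tau in C_0,mu}--\eqref{ineq: deg_u F_mu = L_mu-2 in MV-formula}, the skein-relation degree count \eqref{eqn: deg_u of F^circ}, and the vanishing \eqref{eqn: connected homfly of disjoint links} for split links --- are all statements about vanishing orders at $u=0$; none of them has an analogue at a nontrivial root of unity, and the framing factor $q^{\kappa_A\tau/2}$ does not expand coherently around $q=\zeta$, so ``repeating the cut-and-join analysis in the coordinate $\log(q/\zeta)$'' is not a proof. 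The alternative via Theorem \ref{thm: existence} is circular: \eqref{eqn: P_B} is an \emph{infinite} Laurent series in $(q^{-1/2}-q^{1/2})$ at $q=1$ and carries no information about poles of $P_{\vec{B}}$ elsewhere (for instance $1/[2]_q^2$ and $\frac{1}{4}[1]_q^{-2}+\sum_{g\ge 0}c_g[1]_q^{2g}$ have identical such expansions but different poles at $q=-1$), while the statement that $P_{\vec{B}}$ is a rational function with poles only at $q=1$ is established only in Section \ref{sec: integrality} \emph{using} Proposition \ref{prop: pole structure of tilde_F_mu}, which in turn rests on this corollary. The ingredient your proof is missing is the paper's framing/cabling argument: taking the framing $\tau_\alpha=n_\alpha+1/d_\alpha$ and using the Aiston--Morton identity $\delta_n^n\fS_A=q^{\kappa_A/2}\fS_A$ exhibits $\hat{Z}_{\vec{d}}$ as a HOMFLY polynomial of a satellite $\cL\ast Q_{\vec{d}}$ evaluated at $q^{d_\alpha}$, whence $\prod_\alpha[d_\alpha]\cdot\hat{Z}_{\vec{d}}\in\mathbb{Q}\big[ [1]^2,t^{\pm 1/2}\big]$ and $[d_\alpha]^2\widetilde{F}_{\vec{d}}\in\mathbb{Q}\big[ [1]^2,t^{\pm 1/2}\big]$ (Proposition \ref{prop: Z_mu pole structure}); only this controls the poles at all roots of unity and yields the stated global decomposition.
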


    \begin{remark}
        Combine the above Corollary, \eqref{eqn: connected homfly expansion}
        and \eqref{eqn: F_mu of Z_mu}, we have:
        \begin{align}\label{eqn: F of homfly expansion}
            \widetilde{F}_{(\partition{1},\ldots,\partition{1})}(q,t)
            = \frac{ a(t) }
                { [1]^2 } + \textrm{ polynomial.}
        \end{align}
    \end{remark}


 \subsection{Framing and pole structures}
    Consider $\delta_n = \sigma_1\cdots \sigma_{n-1}$.
    Let $\fS_A$ be the minimal projection from $\mathcal{H}_n \rightarrow \mathcal{H}_A$, and
        \begin{align*}
            \fP_\mu = \sum_A \chi_A(C_\mu) \fS_A
        \end{align*}
    We will apply a lemma of Aiston-Morton \cite{AM} in the following computation:
        \begin{align*}
            \delta_n^n \fS_A = q^{\frac12 \kappa_A} \fS_A\,.
        \end{align*}
        Let
        \begin{align*}
            \vec{d} = \big( (d_1),\ldots, (d_L) \big), \qquad \frac{1}{\vec{d}} = \Big( \frac{1}{d_1}, \ldots, \frac{1}{d_L} \Big)\,.
        \end{align*}
        Due to the cabling formula to the length of partition
        \eqref{eqn: colored homfly and calbing} and
        \eqref{eqn: connected colored homfly and calbing},
        we can simply deal with all the color of one row without loss of generality.
        Take framing $\tau_\alpha=n_\alpha + \frac{1}{d_\alpha}$ and choose a braid group representative of $\cL$ such that the writhe number of $\cL_\alpha$ is $n_\alpha$. Denote by $\vec{\tau} = (\tau_1,\ldots,\tau_L)$,
        \begin{align*}
            \hZ_{\vec{d}} \Big(\cL;q,t; \vec{\tau} \Big)
                &= \sum_{\vec{A} } \chi_{\vec{A}} (C_{\vec{d}} )W_{\vec{A}}(\cL;q,t)
                    q^{\frac12 \sum_{\alpha=1}^L
                    \kappa_{A^\alpha} \big(n_\alpha + \frac{1}{ d_\alpha}\big) } \\
                &= t^{-\frac12 \sum_\alpha d_\alpha n_\alpha}
                    \Tr \Big( \cL_{\vec{d}}
                    \sum_{\vec{A}} \chi_{\vec{A}}(C_{\vec{d}}) q^{\frac12 \sum_{\alpha} \kappa_\alpha \frac{1}{ d_\alpha} }
                    \otimes_{\alpha=1}^L\fS_{A^\alpha} \Big) \\
                &= t^{-\frac12\sum_\alpha d_\alpha n_\alpha}
                    \Tr\Big( \cL_{\vec{d}}
                    \sum_{\vec{A}} \chi_{\vec{A}} (C_{\vec{d}}) (\delta_{d_1}\otimes\cdots\otimes \delta_{d_L})
                    \otimes_{\alpha=1}^L\fS_{A^\alpha} \Big)\\
                &= t^{-\frac12\sum_\alpha d_\alpha n_\alpha}
                    \Tr \Big( \cL_{\vec{d}} \cdot \otimes_{\alpha=1}^L\delta_\alpha \cdot
                    \fP_{(1)}^{(d_1)} \otimes \cdots \otimes \fP_{(1)}^{(d_L)} \Big) \,.
        \end{align*}
        Here, $\fP_{(1)}^{(d_\alpha)}$ means that in the projection, we use $q^{d_\alpha}, t^{d_\alpha}$ instead of using $q,t$.
        If we denote by
        \begin{align*}
            \cL\ast Q_{\vec{d}} = \cL_{\vec{d}} \cdot \delta_{\vec{d}} \cdot
                    \fP_{(1)}^{(d_1)} \otimes \cdots \otimes \fP_{(1)}^{(d_L)} \,,
        \end{align*}
        we have
        \begin{align}\label{eqn: Z_mu as homfly of twist}
            \hZ_{\vec{d}} \Big(\cL;q,t; \frac{1}{\vec{d}} \Big) = \mathfrak{H}(\cL\ast Q_{\vec{d}})\,.
        \end{align}
        Here $\mathfrak{H}$ is the HOMFLY polynomial which is normalized as
        \begin{align*}
            \mathfrak{H} (\mathrm{unknot}) = \frac{ t^{\frac12} - t^{-\frac12} }{ q^{\frac12} -q^{-\frac12} }\,.
        \end{align*}
        With the above normalization, for any given link $\cL$, we have
        \begin{align*}
            [1]^L \cdot\mathfrak{H} (\cL) \in \mathbb{Q} \big[ [1]^2, t^{\pm\frac12} \big]\,.
        \end{align*}
        Substituting $q$ by $q^{d_\alpha}$ in the corresponding component, it leads to the follows:
        \begin{align}\label{eqn: pole of Z_mu}
            \prod_{\alpha=1}^L [d_\alpha] \cdot
            \hat{Z}_{\vec{d}} (\cL; q,t; \vec{\tau})
            \in \mathbb{Q}\big[ [1]^2, t^{\pm\frac12} \big]\,.
        \end{align}

        On the other hand, given any two frames $\vec{\omega}=(\omega_1,\ldots,\omega_L)$,
        \begin{align*}
            \hat{Z}_{\vec{\mu}} (\cL;\, q,t; \vec{\omega})
            &= \sum_{\vec{A}} \chi_{\vec{A}} (C_{\vec{\mu}})
                W_{\vec{A}}(\cL;\,q,t;\vec{\omega}) \\
            &= \sum_{\vec{A}} \chi_{\vec{A}}(C_{\vec{\mu}})
                \sum_{\vec{\nu}} \frac{ \chi_{\vec{A}}(C_{\vec{\nu}})} {\mathfrak{z}_{\vec{\nu}}} \hat{Z}_{\vec{\nu}} (\cL;\,q,t)
                q^{\frac12\sum_\alpha \kappa_{A^\alpha}\omega_\alpha }\,.
        \end{align*}
        Exchange the order of summation, we have the following {\em convolution formula}:
        \begin{align}\label{eqn: convolution formula}
            \hat{Z}_{\vec{\mu}}(\cL;\,q,t;\vec{\omega})
            = \sum_{\vec{\nu}} \frac{ \hat{Z}_{\vec{\nu}}(\cL;\,q,t)}
                {\mathfrak{z}_{\vec{\nu}}}
                \sum_{\vec{A}} \chi_{\vec{A}}(C_{\vec{\mu}})
                \chi_{\vec{A}}(C_{\vec{\nu}})
                q^{\frac12\sum_\alpha \kappa_{A^\alpha} \omega_\alpha}\,.
        \end{align}

        Return to \eqref{eqn: pole of Z_mu}. This property holds for arbitrary choice of $n_\alpha$, $\alpha=1,\ldots,L$. By convolution formula, the coefficients of possible other poles vanish for arbitrary integer $n_\alpha$. $q^{n_\alpha}$ is involved through certain polynomial relation, which implies the coefficients for other possible poles are simply zero. Therefore,
        \eqref{eqn: pole of Z_mu} holds for any frame.

        Consider $\hat{Z}_{\vec{d}}(\cL)$, where each component of $\cL$ is labeled by a young diagram of one row. As discussed
        above, this assumption does not lose any generality.
        Let $\mathcal{K}$ be a component of $\cL$ labeled by the color $(c)$.
        If we multiply $\widetilde{Z}_{\vec{d}}(\cL)$ by $[c]^2$, we call it normalizing $\widetilde{Z}_{\vec{d}}(\cL)$ w.r.t $\mathcal{K}$.
        Similarly, we call $[c]^2\widetilde{F}_{\vec{d}}(\cL)$ normalizing $\widetilde{F}_{\vec{d}}(\cL)$ w.r.t $\mathcal{K}$.

        By \eqref{eqn: F of homfly expansion}, after normalizing $\widetilde{F}_{(\partition{1},\ldots,\partition{1})}(\cL)$ w.r.t
        any component of $\cL$, one will obtain a polynomial in terms of $[1]^2$ and $t^{\pm\frac12}$.
        Note that
        \begin{align*}
            \widetilde{Z}_{\vec{d}} = \sum_{\Lambda\vdash \vec{d}}
                \frac{ \widetilde{F}_{\Lambda} }{ \Aut |\Lambda| }\,.
        \end{align*}
        Therefore, if we normalizing on both side w.r.t $\mathcal{K}$, the equality still holds.
        Applying \eqref{eqn: Z_mu as homfly of twist}, one has
        \begin{align}\label{eqn: normalizing F is polynomial}
            [c]^2 \widetilde{F}_{\vec{d}}(\cL;q,t) \in \mathbb{Q}\big[ [1]^2, t^{\pm\frac12} \big]\,.
        \end{align}
        We thus proved the following proposition:

        \begin{proposition} \label{prop: Z_mu pole structure}
            Notation as above, we have:
        \begin{align}
            \prod_{\alpha=1}^L [d_\alpha] \cdot
                \hZ_{\vec{d}} (\cL;q,t)
            & \in \mathbb{Q} [ [1]^2, t^{\pm\frac12}]; \\
                [d_\alpha]^2 \widetilde{F}_{\vec{d}}
                ( \cL; q,t )
            &\in \mathbb{Q}\big[ [1]^2, t^{\pm\frac12} \big], \forall \alpha\,.
        \end{align}
        \end{proposition}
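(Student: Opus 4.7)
My plan is to reduce the pole-structure bound for $\hZ_{\vec d}$ to the classical pole bound for HOMFLY polynomials via a carefully chosen framing, and then propagate the result from that restricted framing to arbitrary framings using the convolution formula. The starting point is the Aiston--Morton identity $\delta_n^n \fS_A = q^{\kappa_A/2}\fS_A$: choosing $\tau_\alpha = n_\alpha + 1/d_\alpha$ with $n_\alpha = w(\mathcal{K}_\alpha)$ allows the framing factor $q^{\kappa_{A^\alpha}/(2d_\alpha)}$ to be absorbed into a twist $\delta_{d_\alpha}$ inserted along each $d_\alpha$-strand cable block. Summing against the characters $\chi_{\vec A}(C_{\vec d})$ then produces the projectors $\fP_{(1)}^{(d_\alpha)}$ on each block (with $q,t$ rescaled to $q^{d_\alpha},t^{d_\alpha}$), which is precisely the content of \eqref{eqn: Z_mu as homfly of twist}.

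Next I would apply the standard HOMFLY pole bound $[1]^L \mathfrak{H}(\cL') \in \bQ[[1]^2, t^{\pm 1/2}]$ to the auxiliary link $\cL \ast Q_{\vec d}$. Since the rescaling $q \mapsto q^{d_\alpha}$ in the $\alpha$-th cable block turns $[1]$ in that block into $[d_\alpha]$, the HOMFLY bound transfers directly to \eqref{eqn: pole of Z_mu} at the special framing $\vec\tau = 1/\vec d$. To propagate to arbitrary framings I would use the convolution formula \eqref{eqn: convolution formula}: $\hZ_{\vec\mu}(\cL;q,t;\vec\omega)$ is a $\bQ[q^{\pm\omega_\alpha/2}]$-linear combination of the unframed $\hZ_{\vec\nu}(\cL;q,t)$, so any hypothetical spurious pole outside $\bQ[[1]^2, t^{\pm 1/2}]$ would have a coefficient depending polynomially on $q^{n_\alpha}$ that vanishes for every integer shift $n_\alpha$, forcing it to vanish identically. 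This establishes the first assertion of the proposition.

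For the $\widetilde F$ statement I would induct on $|\vec d|$ through the plethystic relation $\widetilde Z_{\vec d} = \sum_{\Lambda \vdash \vec d} \widetilde F_\Lambda / |\Aut \Lambda|$ coming from Lemma \ref{lemma: taylor expansion}. The base case $\vec d = (\partition{1},\ldots,\partition{1})$ is exactly \eqref{eqn: F of homfly expansion}. For general $\vec d$, I would fix a component $\mathcal K$ with color $(c)$, multiply the plethystic identity by $[c]^2$, isolate the $\Lambda = (\vec d)$ summand as the top term, and control the remaining products of $\widetilde F_\Lambda$ (with $\Lambda$ of strictly smaller degree) by the inductive hypothesis; the left-hand side is controlled by the first claim.

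The step I expect to be most delicate is the spurious-pole vanishing inside the convolution argument: one must confirm that $q^{n_\alpha}$ enters the convolution kernel polynomially and that the only denominator factors that can survive are the $[d_\alpha]$, never extraneous $[m d_\alpha]$ with $m \neq 1$. Once that algebraic bookkeeping is made airtight, the Vandermonde-type vanishing finishes Stage 3, and the induction for $\widetilde F_{\vec d}$ reduces to an essentially formal exercise.
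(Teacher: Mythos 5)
Your proposal follows essentially the same route as the paper: the Aiston--Morton twist identity at framing $\tau_\alpha = n_\alpha + 1/d_\alpha$ to identify $\hZ_{\vec d}$ with a HOMFLY polynomial of the twisted cable $\cL\ast Q_{\vec d}$, the classical bound $[1]^L\mathfrak{H}\in\mathbb{Q}[[1]^2,t^{\pm 1/2}]$ with $q\mapsto q^{d_\alpha}$ on each block, the convolution formula plus arbitrariness of the integers $n_\alpha$ to remove the framing restriction, and finally the relation $\widetilde Z_{\vec d}=\sum_{\Lambda}\widetilde F_\Lambda/|\Aut\Lambda|$ with base case \eqref{eqn: F of homfly expansion} for the $\widetilde F$ statement. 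Your inductive bookkeeping in the last step is spelled out slightly more explicitly than the paper's, but the argument is the same.
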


        For a given $\vec{d}$, denote by $D_{\vec{d}}$ the
        $\gcd\{d_1, \ldots, d_L\}$.
        Proposition \ref{prop: Z_mu pole structure} implies that the principle part of $\widetilde{F}$ are possible summations of $\frac{a(t)}{[k]^2}$, where $k$ divides all $d_\alpha$'s, or equivalently, $k|D_{\vec{d}}$.

        Similar as above, choose a braid group element representative for $\cL$ such that its $\alpha$-th component has writhe number $n_\alpha$ and choose frame as $\vec{\tau}=(n_1+\frac{1}{D_{\vec{d}} },\ldots, n_L+\frac{1}{D_{\vec{d}}})$.
        \begin{align}
            \hZ_{\vec{d}}(\cL;q,t; \vec{\tau})
            &= \sum_{\vec{A}} \chi_{\vec{A}}(C_{\vec{d}})  W_{\vec{A}}(\cL;q,t)
                q^{ \frac12\sum_\alpha \kappa_{A^\alpha} (n_\alpha+\frac{1}{D_{\vec{d}}} ) } \nonumber \\
            &= t^{-\frac12\sum_\alpha d_\alpha n_\alpha}
                \Tr \Big(  \cL_{\vec{d}} \sum_{\vec{A}} \chi_{\vec{A}}(C_{\vec{d}}) q^{ \frac12\sum_\alpha \kappa_{A^\alpha} \frac{1}{D_{\vec{d}}} }
                \otimes_{\alpha=1}^L\fS_{A^\alpha} \Big) \nonumber \\
            &= t^{-\frac12\sum_\alpha d_\alpha n_\alpha}
                \Tr (\cL_{\vec{d}} \cdot \otimes_{\alpha=1}^L\delta_{d_\alpha}^{\frac{d_\alpha}{D_{\vec{d}}} } \otimes_{\alpha=1}^L
                \fP_{(d_\alpha/D_{\vec{d}})}^{(D_{\vec{d}})} )
                \nonumber \\
            &= t^{-\frac12\sum_\alpha d_\alpha n_\alpha}
                \hZ_{\vec{d}/D_{\vec{d}}} (\cL_{\vec{d}} \cdot \otimes_{\alpha=1}^L
                \delta_{d_\alpha}^{\frac{d_\alpha}{D_{\vec{d}}} } ; q^{D_\mu}, t^{D_\mu} ) \,.
                \label{eqn: frame of D_mu}
        \end{align}
        This implies that $\hZ_{\vec{d}}$ is a rational function of
        $q^{\pm\frac12 D_{\vec{d}}}$ and
        $t^{\pm \frac12 D_{\vec{d}}}$.

        Passing this to $\widetilde{F}_{\vec{d}}$, we have $\widetilde{F}_{\vec{d}}$
        is a rational function of $[D_{\vec{d}}]^2$ and $t^{\pm\frac12 D_{\vec{d}}}$.
        As discussed above, the principle part of $\widetilde{F}_{\vec{d}}$ are summations
        of $\frac{ a(t) }{ [k]^2 }$ where $k|D_{\vec{d}}$. Combining them, we have
        \begin{align} \label{eqn: pole of tilde_F_mu}
            \widetilde{F}_{\vec{d}}(\cL;q,t)
            = \frac{ H_{\vec{d}/D_{\vec{d}}} ( t^{D_{\vec{d}}} ) } { [D_{\vec{d}}]^2} +
            \textrm{polynomial in $[D_{\vec{d}}]^2$ and $t^{\pm\frac12 D_{\vec{d}}}$}\,.
        \end{align}
        Once again, due to arbitrary choice of $n_\alpha$, we know the above pole structure of $\widetilde{F}_{\vec{d}}$ holds
        for any frame.

        Now we will show $H_{\vec{d}/D_{\vec{d}}} (t)$ only depends on $\vec{d}/D_{\vec{d}}$ and $\cL$. If one checks
        \eqref{eqn: frame of D_mu}, one finds the computation also follows if $1/D_{\vec{d}}$ is replaced by $1/k$ for any
        $k| D_{\vec{d}}$. By induction, we obtain that $H_{\vec{d}/D_{\vec{d}}} (t)$ only depends on $\vec{d}/D_{\vec{d}}$ and
        $\cL$.

        Combining \eqref{eqn: connected colored homfly and calbing}, we proved the following Proposition:
        \begin{proposition} \label{prop: pole structure of tilde_F_mu}
            Notations are as above. Assume $\cL$ is labeled by the color $\vec{\mu}=(\mu^1,\ldots,\mu^L)$. Denote by $D_{\vec{\mu}}$ is the greatest
            common divisor of $\{\mu^1_1,\ldots,\mu^1_{\ell(\mu^1)},\ldots, \mu^i_j,\ldots, \mu^L_{\ell(\mu^L)}\}$.
            $\widetilde{F}_{\vec{\mu}}$ has the following structure:
            \begin{align*}
                \widetilde{F}_{ \vec{\mu} } (q,t)  = \frac{ H_{\vec{\mu}/D_{\vec{\mu}}}( t^{D_{\vec{\mu}}} ) } { [D_{\vec{\mu}} ]^2 } +
                f(q,t)\,,
            \end{align*}
            where $H_{\vec{\mu}/D_{\vec{\mu}}}(t)$ only depends on $\vec{\mu}/D_{\vec{\mu}}$ and $\cL$, $f(q,t)\in\mathbb{Q}\big[ [1]^2, t^{\pm\frac12} \big]$.
        \end{proposition}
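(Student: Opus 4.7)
The plan is to establish the proposition by essentially packaging and extending the computation that appears in the paragraphs just before it. The cabling formula \eqref{eqn: connected colored homfly and calbing} lets me restrict attention to the case where every labeling partition consists of a single row: if I can prove the result for colors $\vec{d}=(d_1,\ldots,d_L)$ with one-row partitions, the general case follows by applying the one-row result to the cabled link $\cL_{\vec{\mu}}$ and reading off the claim for $\cL$ colored by $\vec{\mu}$, noting that the one-row parts of the cabled data are exactly $\{\mu^i_j\}$, whose gcd is $D_{\vec{\mu}}$.

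For the one-row case, the first step is to exhibit a framing that makes the $D_{\vec{d}}$-periodicity visible. Imitating the computation preceding \eqref{eqn: frame of D_mu}, I would pick a braid representative with writhes $n_\alpha$ along component $\cL_\alpha$ and set the framing to $\vec\tau=(n_1+1/D_{\vec{d}},\ldots,n_L+1/D_{\vec{d}})$. Applying the Aiston--Morton identity $\delta_n^n\fS_A = q^{\kappa_A/2}\fS_A$ and the convolution \eqref{eqn: convolution formula} to absorb the rational framings into powers of the cabled full twist, I obtain, exactly as in \eqref{eqn: frame of D_mu}, an identity of the form
\[
\hZ_{\vec{d}}(\cL;q,t;\vec\tau)
= t^{-\frac12\sum d_\alpha n_\alpha}\,
\hZ_{\vec{d}/D_{\vec{d}}}\bigl(\cL_{\vec{d}}\cdot\otimes_\alpha \delta_{d_\alpha}^{d_\alpha/D_{\vec{d}}};\,q^{D_{\vec{d}}},t^{D_{\vec{d}}}\bigr).
\]
Together with Proposition \ref{prop: Z_mu pole structure} applied at the variables $q^{D_{\vec{d}}},t^{D_{\vec{d}}}$, this forces $\hZ_{\vec{d}}$, and hence $\widetilde{F}_{\vec{d}}$, to be a rational function of $q^{\pm D_{\vec{d}}/2}$ and $t^{\pm D_{\vec{d}}/2}$, valid for every choice of $n_\alpha$; convolution plus a polynomial-relation argument in $q^{n_\alpha}$ (again as in the paragraph after \eqref{eqn: pole of Z_mu}) promotes this to arbitrary framing.

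Combining this rationality in $[D_{\vec{d}}]^2$ and $t^{\pm D_{\vec{d}}/2}$ with Proposition \ref{prop: Z_mu pole structure}, which pins the only possible pole to denominators $[k]^2$ with $k\mid d_\alpha$ for every $\alpha$ (i.e.\ $k\mid D_{\vec{d}}$), gives the form \eqref{eqn: pole of tilde_F_mu}: a single pole of order two in $[D_{\vec{d}}]^2$ with numerator a polynomial in $t^{\pm D_{\vec{d}}/2}$, plus a polynomial remainder. This will be the statement
\[
\widetilde{F}_{\vec{d}}(q,t)
= \frac{H_{\vec{d}/D_{\vec{d}}}\!\bigl(t^{D_{\vec{d}}}\bigr)}{[D_{\vec{d}}]^2}+f(q,t),
\qquad f\in\mathbb{Q}[[1]^2,t^{\pm 1/2}].
\]

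The step I expect to be the real content, rather than bookkeeping, is showing that the residue $H_{\vec{d}/D_{\vec{d}}}(t)$ depends only on the reduced tuple $\vec{d}/D_{\vec{d}}$ and on $\cL$, not on $\vec{d}$ itself. For this I would rerun the framing calculation with $1/D_{\vec{d}}$ replaced by $1/k$ for any divisor $k\mid D_{\vec{d}}$; the same Aiston--Morton manipulation goes through and expresses $\hZ_{\vec{d}}$ at that framing as $\hZ_{\vec{d}/k}$ evaluated at $(q^k,t^k)$ times an explicit framing correction. Induction on $D_{\vec{d}}/k$, using the already-established pole structure at the smaller level, lets me match residues at $[k]^2$ on both sides and conclude that the residue depends only on $\vec{d}/D_{\vec{d}}$ and $\cL$. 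Transporting everything back through the cabling identity \eqref{eqn: connected colored homfly and calbing} yields the proposition as stated for arbitrary $\vec{\mu}$.
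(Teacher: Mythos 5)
Your proposal follows the paper's own argument essentially step for step: reduction to one-row colors via the cabling identity \eqref{eqn: connected colored homfly and calbing}, the framing $\tau_\alpha=n_\alpha+1/D_{\vec{d}}$ combined with the Aiston--Morton identity to produce \eqref{eqn: frame of D_mu} and hence rationality in $q^{\pm D_{\vec{d}}/2}$, $t^{\pm D_{\vec{d}}/2}$, the convolution formula to remove the framing dependence, intersection with Proposition \ref{prop: Z_mu pole structure} to isolate the single pole at $[D_{\vec{d}}]^2$, and the rerun with $1/k$ for $k\mid D_{\vec{d}}$ plus induction to show the residue depends only on $\vec{d}/D_{\vec{d}}$ and $\cL$. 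This matches the paper's proof; no substantive differences to report.
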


        \begin{remark}
            In Proposition \ref{prop: pole structure of tilde_F_mu}, it is very interesting to interpret in topological string side that
            $H_{\vec{\mu}/D_{\vec{\mu}}}(t)$ only depends on $\vec{\mu}/D_{\vec{\mu}}$ and $\cL$. The principle term is generated due to
            summation of counting rational curves and independent choice of $k$ in the labeling color
            $k\cdot\vec{\mu}/D_{\vec{\mu}}$. This phenomenon simply tells us that contributions of counting rational curves in
            the labeling color $k\cdot\vec{\mu}/D_{\vec{\mu}}$ are through multiple cover contributions of
            $\vec{\mu}/D_{\vec{\mu}}$.
        \end{remark}


\section{Proof of Theorem \ref{thm: integrality}} \label{sec: integrality}

\subsection{A ring characterizes the partition function}\label{subsec: Integrality - ring structure}

Let $\vec{d}=(d_{1},...,d_{L})$, $y=(y^1,\ldots,y^L)$. Define
\begin{equation}\label{eqn: def of T_d}
 T_{\vec{d}}
=\sum_{\vec{B}=\vec{d}} s_{\vec{B}}(y) P_{\vec{B}}(q,t) \,.
\end{equation}
By the calculation in Appendix \ref{appendix: T_vd calculation}, one will get
\begin{equation}\label{eqn: integrality eqn}
 T_{\vec{d}}
=
 q^{| \vec{d}| }
    \sum_{k|\vec{d}}\frac{\mu (k)}{k}
    \sum_{\mathfrak{A}\in \mathcal{P(P}^{n}),
        \parallel\mathfrak{A}\parallel =\vec{d}/k}
    \theta_{\mathfrak{A}}W_{\mathfrak{A}}(q^{k},t^{k})
    s_{\mathfrak{A}}(z^{k})
\end{equation}
However,
\begin{align*}
   T_{\vec{d}}
&=\sum_{| \vec{B}| =\vec{d}}s_{\vec{B}}(y)
    P_{\vec{B}}(q,t)
\\
&=\sum_{g= 0}^\infty \sum_{Q\in \mathbb{Z}/2}
    \bigg( \sum_{|\vec{B}| =\vec{d}}N_{\vec{B};\,g,Q}s_{\vec{B}}(y)
    \bigg)
    (q^{1/2}-q^{-1/2})^{2g-2}t^{Q} \,.
\end{align*}
Denote by $\Omega(y)$ the space of all integer coefficient symmetric functions
in $y$. Since Schur functions forms a basis of $\Omega(y)$ over $\mathbb{Z}$,
$N_{\vec{B};\,g,Q}\in \mathbb{Z}$ will follow from
\[
 \sum_{|\vec{B}|
=\vec{d}}N_{\vec{B};\,g,Q}s_{\vec{B}}(y)\in \Omega(y) \, .
\]

Let $v=[1]^2_q$. It's easy to see that $[n]^2_q$ is a monic polynomial of $v$
with integer coefficients. The following ring is very crucial in
characterizing the algebraic structure of Chern-Simons partition function, which
will lead to the integrality of $N_{\vec{B};\,g,Q}$.
\[
   \mathfrak{R}(y;v,t)
=  \bigg\{ \frac{a(y;v,t)}{b(v)}:\, a(y;v,t)\in
    \Omega(y)[v,t^{\pm 1/2}],
    b(v)=\prod_{n_k} [n_k]^2_q\in \mathbb{Z}[v]
   \bigg\}  \,.
\]
If we slightly relax the condition in the ring $\mathfrak{R}(y;\,v,t)$, we have
the following ring which is convenient in the $p$-adic argument in the following
subsection.
\[
  \mathcal{M}(y;\,q,t)
= \bigg\{ \frac{f(y;\,q,t)}{b(v)}:\, f\in
    \Omega (y)[q^{\pm 1/2},t^{\pm 1/2}],
    b(v)=\prod_{n_k} [n_k]^2_q\in \mathbb{Z}[v]\bigg\}\, .
\]
Given $\frac{f(y;\,q,t)}{b(v)}\in \mathcal{M}(y;\,q,t)$, if $f(y;\,q,t)$ is a
primitive polynomial in terms of $q^{\pm 1/2}$, $t^{\pm 1/2}$ and Schur
functions of $y$, we call $\frac{f(y;\,q,t)}{b(v)}$ is primitive.


\subsection{Multi-cover contribution and $p$-adic argument}

\begin{proposition}
\label{prop: T_d in L}$T_{\vec{d}}(y;\,q,t)\in \mathfrak{R}(y;v,t)$.
\end{proposition}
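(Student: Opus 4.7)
The plan is to prove $T_{\vec{d}}\in\mathfrak{R}(y;v,t)$ in three stages, progressively tightening the structural statement, with the integrality of coefficients being the last and hardest step.

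\medskip

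\noindent\textbf{Stage 1 (Rationality in $\mathcal{M}(y;q,t)$).} I would start from the convolution/Möbius expression \eqref{eqn: integrality eqn} for $T_{\vec{d}}$ and reorganize the inner sum $\sum_{\mathfrak{A}} \theta_{\mathfrak{A}} W_{\mathfrak{A}}(q^k,t^k) s_{\mathfrak{A}}(z^k)$ into the connected generating functions $\widetilde{F}_{\vec{\mu}}$ evaluated at $(q^k,t^k)$. Proposition \ref{prop: pole structure of tilde_F_mu} then bounds the pole: every $\widetilde{F}_{\vec{\mu}}(q^k,t^k)$ has at worst a double pole at $q=1$ of the form $H_{\vec{\mu}/D_{\vec{\mu}}}(t^{kD_{\vec{\mu}}})/[kD_{\vec{\mu}}]_q^2$ plus a polynomial in $[kD_{\vec{\mu}}]_q^2$ and $t^{\pm kD_{\vec{\mu}}/2}$. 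Re-expanding the $s_{\mathfrak{A}}(z^k)$ in the Schur basis $\{s_{\vec{B}}(y)\}$ (a $\bQ$-linear change of basis inside $\Omega(y)$), one obtains that $T_{\vec{d}}$ lies in $\mathcal{M}(y;q,t)$, with denominator a product of factors of the form $[n_k]_q^2$ with $n_k\mid D_{\vec{d}}$.

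\medskip

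\noindent\textbf{Stage 2 (Reduction to $\mathfrak{R}(y;v,t)$ via symmetry).} Lemma \ref{lemma: symmetry} gives $W_{\vec{A}^t}(q,t)=(-1)^{\parallel\vec{A}\parallel} W_{\vec{A}}(q^{-1},t)$, which via \eqref{eqn: P_B of F_mu} propagates to $P_{\vec{B}}(q,t)=P_{\vec{B}}(q^{-1},t)$ and hence to $T_{\vec{d}}(y;q,t)=T_{\vec{d}}(y;q^{-1},t)$. Since the denominators $[n_k]_q^2$ are already invariant under $q\mapsto q^{-1}$, the numerator obtained in Stage 1 must be a Laurent polynomial in $q^{\pm 1/2}$ fixed by $q\mapsto q^{-1}$, hence a polynomial in $v=[1]_q^2$. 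Combined with the Schur structure in $y$ and polynomial dependence on $t^{\pm 1/2}$, this shows $T_{\vec{d}}\in\mathfrak{R}(y;v,t)\otimes_{\mathbb{Z}}\bQ$.

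\medskip

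\noindent\textbf{Stage 3 (Integrality of coefficients: $p$-adic argument).} The remaining task is to upgrade $\bQ$ coefficients to $\mathbb{Z}$ coefficients; this is the main obstacle, because the explicit sum \eqref{eqn: integrality eqn} carries the fraction $\mu(k)/k$. I would prove $p$-integrality one prime at a time. Fix a prime $p$ and write $\vec{d}=p^e \vec{d}_0$ with $p\nmid D_{\vec{d}_0/D_{\vec{d}_0}}$. Partition the divisors $k\mid\vec{d}$ according to $v_p(k)=j\leq e$ and rewrite the Möbius sum as an alternating sum indexed by $j$. The key ingredient will be a Frobenius-type congruence for quantum group invariants of the cabling,
\[
 W_{\mathfrak{A}}(q^{p},t^{p}) \;\equiv\; W_{\mathfrak{A}}(q,t)^{\,p}
 \pmod{p\,\mathbb{Z}[q^{\pm 1/2},t^{\pm 1/2}]},
\]
which I would extract from the Hecke-algebra description in Section \ref{sec: Hecke algebra} (the generators $g_i$ satisfy a quadratic relation over $\mathbb{Z}[q^{\pm 1/2}]$, so the $p$-th power Frobenius acts compatibly on all traces). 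Iterating this congruence gives $W_{\mathfrak{A}}(q^{p^j},t^{p^j})\equiv W_{\mathfrak{A}}(q,t)^{p^j}\pmod{p^{j}}$, which exactly cancels the $1/p^j$ from $\mu(k)/k$ layer by layer; the alternating signs collapse the sum modulo $p^{e+1}$ to a multiple of $p^{e+1}\cdot(\text{something } p\text{-integral})$, yielding $p$-integrality of each coefficient in the Schur-in-$y$, monomial-in-$v$ and $t^{\pm 1/2}$ expansion.

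\medskip

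The hardest part is Stage 3: establishing the Frobenius congruence at the level of the full $W_{\mathfrak{A}}$ (not just a single trace of a braid word) and coordinating it with the combinatorial factors $\theta_{\mathfrak{A}}$, $\mathfrak{u}_{\mathfrak{A}}$, and with the Schur plethysm $s_{\mathfrak{A}}(z^k)\leftrightarrow s_{\vec{B}}(y)$, so that the cancellation of $1/p^j$ takes place uniformly in all these data. Once Stage 3 is established, combining it with Stages 1 and 2 gives $T_{\vec{d}}\in\mathfrak{R}(y;v,t)$ as claimed.
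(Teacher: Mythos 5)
Your overall architecture is the same as the paper's: reduce to the rational statement via the pole structure and the $q\mapsto q^{-1}$ symmetry (your Stages 1--2 correspond to the paper's appeal to Theorem \ref{thm: existence} together with $W_{\vec{A}}\in\mathcal{M}(y;\,q,t)$, so that it suffices to prove $T_{\vec{d}}\in\mathcal{M}(y;\,q,t)$), and then prove $p$-integrality one prime at a time by pairing $k$ with $pk$ in the M\"obius sum and invoking a Frobenius-type congruence. That pairing is exactly the paper's Lemma \ref{lemma: multi-cover contribution}. Note only that membership in $\mathcal{M}(y;\,q,t)$ already encodes integrality of the numerator, so your Stage 1 really only yields the $\mathbb{Q}$-coefficient version, as you implicitly concede at the start of Stage 3.

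Stage 3, however, has a genuine gap: you locate the denominators to be cancelled in the factors $\mu(k)/k$ and speak of cancelling ``$1/p^{j}$ layer by layer,'' but $\mu(k)$ vanishes unless $k$ is squarefree, so the M\"obius sum contributes at most a single factor of $1/p$ per prime. The actual source of arbitrarily high powers of $p$ in denominators is the coefficient $\theta_{\mathfrak{A}}=(-1)^{\ell(\mathfrak{A})-1}(\ell(\mathfrak{A})-1)!/|\Aut\mathfrak{A}|$ in \eqref{eqn: integrality eqn}, and your proposal supplies no mechanism for it --- you only flag it as something to be ``coordinated.'' That coordination is the substance of the paper's proof: Lemma \ref{lemma: theta multiplicity} shows that a factor $p^{r}$ in the denominator of $\theta_{\mathfrak{A}}$ forces $\mathfrak{A}=\mathfrak{B}^{(p^{r})}$ to be a $p^{r}$-fold repetition; Theorem \ref{theorem: number theory}, the multinomial congruence $\binom{p^{r}a}{p^{r}a_{1},\cdots,p^{r}a_{n}}\equiv\binom{p^{r-1}a}{p^{r-1}a_{1},\cdots,p^{r-1}a_{n}} \bmod p^{2r}$, then gives $\Ord_{p}\big(\theta_{\mathfrak{A}^{(p)}}-\frac{1}{p}\theta_{\mathfrak{A}}\big)>0$; and Theorem \ref{thm: Fermat theorem}, the iterated Frobenius congruence $f(y;q,t)^{p^{r+1}}-f(y^{p};q^{p},t^{p})^{p^{r}}\equiv 0 \bmod p^{r+1}$ (the exponent $p^{r}$ is needed precisely because $f_{\mathfrak{A}}=f_{\mathfrak{B}}^{p^{r}}$), absorbs the remaining $p^{r+1}$ in the denominator of $\frac{1}{p}\theta_{\mathfrak{A}}$. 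Only the combination of all three yields $\Ord_{p}\big(\theta_{\mathfrak{A}^{(p)}}W_{\mathfrak{A}^{(p)}}(q,t)s_{\mathfrak{A}^{(p)}}(z)-\frac{1}{p}\theta_{\mathfrak{A}}W_{\mathfrak{A}}(q^{p},t^{p})s_{\mathfrak{A}}(z^{p})\big)\geq 0$, which is the statement your single congruence $W_{\mathfrak{A}}(q^{p},t^{p})\equiv W_{\mathfrak{A}}(q,t)^{p} \bmod p$ would have to be strengthened to. Two minor further points: the Frobenius congruence for integer-coefficient Laurent polynomials is formal and needs no Hecke-algebra input beyond $W_{\vec{A}}\in\mathcal{M}(y;\,q,t)$; and it must also be applied with $y\mapsto y^{p}$ in the Schur factors $s_{\mathfrak{A}}(z)$, which your statement omits.
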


\begin{proof}
Recall the definition of quantum group invariants of links. By the formula of
universal $R$-matrix, it's easy to see that
\[
 W_{\vec{A}}(\mathcal{L})\in \mathcal{M}(y;\,q,t)\, .
\]
Since we have already proven the existence of the pole structure in LMOV
conjecture, Proposition \ref{prop: T_d in L} will be naturally satisfied if we
can prove
\begin{equation}\label{prop: T_d in ring M}
T_{\vec{d}}(y;\,q,t)\in \mathcal{M}(y;\,q,t)\, .
\end{equation}
Before diving into the proof of (\ref{prop: T_d in ring M}), let's do some
preparation. For
\[
 \forall\mathfrak{A}
=(\vec{A}_{1},\vec{A}_{2},...)\in \mathcal{P}(\mathcal{P}^{L}) \, ,
\]
define
\[
   \mathfrak{A}^{(d)}
=  \bigg( \underbrace{\vec{A}_{1},...,\vec{A}_{1}}_d,
    \underbrace{\vec{A}_{2},...,\vec{A}_{2}}_d,\ldots
   \bigg)\, .
\]

\begin{lemma}\label{lemma: theta multiplicity}
If $\theta_{\mathfrak{A}}=\frac{c}{d}$, $d>1 $ and $\gcd\left( c,d\right) =1$,
then for any $r|d$, we can find
$\mathfrak{B}\in\mathcal{P}(\mathcal{P}^{L})$ such that
$\mathfrak{A}=\mathfrak{B}^{(r)}$.
\end{lemma}

\begin{proof}
Let $\ell=\ell(\mathfrak{A})$, we have
\[
  \theta _{\mathfrak{A}}
= \frac{(-1)^{\ell(\mathfrak{A})-1}}{\ell(\mathfrak{A})}u_{%
    \mathfrak{A}}
= \frac{(-1)^{\ell(\mathfrak{A})-1} (\ell(\mathfrak{A})-1)!}
    {|\Aut \mathfrak{A}|}\,.
\]
Let
\[
  \mathfrak{A}
= \bigg(\underbrace{\vec{A}_{1},...,\vec{A}_{1}}_{m_1},
    \ldots,
    \underbrace{\vec{A}_{n},...,\vec{A}_{n}}_{m_{n}}
  \bigg)
    \, ,
\]
so $\ell(\mathfrak{A})=m_{1}+...+m_{n}$. Note that
\[
 u_{\mathfrak{A}}
=\binom{\ell}{m_{1},\,m_{2},\cdots ,\,m_{n}}.
\]
Let $\eta=\gcd (m_{1},\,m_{2},\cdots ,\,m_{n})$. We have
$\mathfrak{A}=\mathfrak{\widetilde{A}}^{(\eta)}$, where
\[
  \mathfrak{\widetilde{A}}
= \bigg(\underbrace{\vec{A}_{1},\ldots,\vec{A}_1}_{m_1/\eta},
    \ldots,
    \underbrace{\vec{A}_{n},\ldots,\vec{A}_{n}}_{m_n/\eta}
  \bigg) \, .
\]
By Corollary \ref{corollary:a devide (a: a-1, ..., a-n)} ,
$\frac{\ell}{\eta}|u_{\mathfrak{A}}$ and $\gcd (c,d)=1$, one has $d|\eta$. We
can take $\mathfrak{B}=\widetilde{\mathfrak{A}}^{(\frac{\eta}{r})}$. This
completes the proof.
\end{proof}

\begin{remark}
By the choice of $\eta$ in the above proof, we know $|\vec{A}_\alpha|$ is
divisible by $\eta$ for any $\alpha$.
\end{remark}

By Lemma \ref{lemma: theta multiplicity} and (\ref{eqn: integrality eqn}),
we know $T_{\vec{d}}$ is of form $f(y;\,q,t)/k$, where
$f\in \mathcal{M} (y;\,q,t) $, $k|\vec{d}$.
We will show $k$ is in fact $1$.

Given $\frac{r}{s}\frac{f(y;\,q,t)}{b(v)}$ where
\[
 \frac{f(y;\,q,t)}{b(v)}\in\mathcal{M}(y;\,q,t)
\]
is primitive, define
\begin{align} \label{def: Ord_p}
  \Ord_{p}\Big(\frac{r}{s} \frac{f( y;\,q,t)}{b(v)}\Big)
= \Ord_{p}\Big(\frac{r}{s}\Big) \, .
\end{align}

\begin{lemma}
Given $\mathfrak{A}\mathcal{\in P}\left( \mathcal{P}^{L}\right) $, $p$ any
prime number and $f_\mathfrak{A} (y;\,q,t) \in \mathcal{M} (y;\,q,t)$, we have
\[
  \Ord_p
    \Big(
    \theta _{\mathfrak{A}^{(p)}}f_{\mathfrak{A}^{(p)}}(y;\,q,t)
    -\frac{1}{p}\theta_{\mathfrak{A}}f_{\mathfrak{A}}(y^p;\,q^p,t^p)
    \Big)
\geq 0 \,.
\]
\end{lemma}

\begin{proof}
Assume $\theta_\mathfrak{A}=\frac{b}{^{p^{r}\cdot a}}$, where
$\gcd (p^{r}\cdot a,b)=1$, $p\nmid a$. In (\ref{eqn: theta minus sign}), minus
is taken except for one case: $p=2$ and $r=0$, in which the calculation is very
simple and the same result holds. Therefore, we only show the general case which
minus sign is taken. By Lemma \ref{lemma: theta multiplicity}, one can choose
$\mathfrak{B}\in \mathcal{P(P}^{L})$ such that
$\mathfrak{A}=\mathfrak{B}^{(p^{r})}$. Note that
$f_{\mathfrak{A}}=f_{\mathfrak{B}}^{p^{r}}$.

Let $s=\ell(\mathfrak{B})$ and
\[
 \mathfrak{B}
=\bigg(\underbrace{\vec{B}_{1},\ldots,\vec{B}_1}_{s_1},
    \ldots,
    \underbrace{\vec{B}_{k},\ldots,\vec{B}_{n}}_{s_k}
  \bigg).
\]
Since
$\ell(\mathfrak{A}^{(p)})=p\cdot \ell(\mathfrak{A})$,
by Theorem \ref{theorem: number theory},
\begin{align}
&\Ord_{p}
    \Big( \theta _{\mathfrak{A}^{(p)}}-\frac{1}{p}\theta _{\mathfrak{A}}
    \Big) \nonumber
\\
& = \Ord_{p}\frac{1}{p\cdot \ell(\mathfrak{A})}
    \bigg[ \binom{p^{r+1}s}{p^{r+1}s_{1},\cdots ,p^{r+1}s_{k}}
    -\binom{p^{r}s}{p^{r}s_{1},\cdots,p^{r}s_{k}}\bigg]
    \label{eqn: theta minus sign}
\\
& \geq 2(r+1)-(1+r) \nonumber
\\
& > 0 \label{ineq: multi-cover in theta}\,.
\end{align}
By Theorem \ref{thm: Fermat theorem},
\begin{align*}
& \Ord_p
    \Big[ \frac{1}{p}\theta_{\mathfrak{A}}
    \Big(f_{\mathfrak{A}^{(p)}}(y;\,q,t)
    -f_{\mathfrak{A}}(y^{p};\,q^{p},t^{p} \Big) \Big]
\\
& = \Ord_{p}
    \bigg[ \frac{b}{p^{r+1}a}
        \Big( f_{\mathfrak{B}}(y;\,q,t)^{p^{r+1}}
        -f_{\mathfrak{B}}(y^{p};\,q^{p},t^{p})^{p^{r}}\Big)
    \bigg]
\\
& \geq 0 \,.
\end{align*}

Apply the above two inequalities to
\begin{align*}
& \Ord_p
    \Big( \theta_{\mathfrak{A}^{(p)}}f_{\mathfrak{A}^{(p)}}(y;\,q,t)
    -\frac{1}{p}\theta_{\mathfrak{A}}f_{\mathfrak{A}}(y^{p};\,q^{p},t^{p})
    \Big)
\\
& = \Ord_p
    \Big[ \Big(\theta_{\mathfrak{A}^{(p)}}-\frac{1}{p}\theta_{\mathfrak{A}}\Big)
        f_{\mathfrak{A}^{(p)}}(y;\,q,t)
    +\frac{1}{p}\theta_{\mathfrak{A}}
    \Big(f_{\mathfrak{A}^{(p)}}(y;\,q,t)
    -f_{\mathfrak{A}}(y^{p};\,q^{p},t^{p} \Big) \Big]\,.
\end{align*}
The proof is completed.
\end{proof}

Apply the above Lemma, we have
\begin{equation}\label{eqn: calculation for lemma of multicover}
  \Ord_{p}\Big( \theta _{\mathfrak{A}^{(p)}}W_{\mathfrak{A}^{(p)}}(q,t)s_{%
    \mathfrak{A}^{(p)}}(z)-\frac{1}{p}\theta _{\mathfrak{A}}W_{\mathfrak{A}%
    }(q^{p},t^{p})s_{\mathfrak{A}}(z^{p})\Big)
\geq 0 \,.
\end{equation}

Let
\[
 \Phi_{\vec{d}}\left( y;\,q,t\right)
=\sum_{\mathfrak{A}\in \mathcal{P(P}^{n}),\,
 \parallel\mathfrak{A}\parallel =\vec{d}}\theta _{\mathfrak{A}}
 W_{\mathfrak{A}}(q,t)s_{\mathfrak{A}}(z) \, .
\]
By Lemma \ref{lemma: theta multiplicity}, one has
\[
 \Big\{ \mathfrak{B}:\; \parallel \mathfrak{B} \parallel = p\,\vec{d}
    \textrm{ and } \Ord_p(\theta_{\mathfrak{B}})<0
 \Big\}
=\Big\{ \mathfrak{A}^{(p)}:\; \parallel \mathfrak{A} \parallel = \vec{d}\,
 \Big\}
\]
Therefore, By (\ref{eqn: calculation for lemma of multicover}),
\begin{align*}
& \Ord_p \big( \Phi_{p\vec{d}}\,(y;\,q,t )
 -\frac{1}{p} \Phi_{\vec{d}}\, ( y^{p};q^{p},t^{p}) \big)
\\
&=\sum_{\parallel\mathfrak{A}\parallel =\vec{d}}
 \Ord_{p}\Big( \theta _{\mathfrak{A}^{(p)}}W_{\mathfrak{A}^{(p)}}(q,t)s_{%
    \mathfrak{A}^{(p)}}(z)-\frac{1}{p}\theta _{\mathfrak{A}}W_{\mathfrak{A}%
    }(q^{p},t^{p})s_{\mathfrak{A}}(z^{p})\Big)
\\
&\geq 0 \,.
\end{align*}
We have thus proven the following Lemma.

\begin{lemma}\label{lemma: multi-cover contribution}
For any prime number $p$ and $\vec{d}$,
\[
 \Ord_{p}
 \Big( \Phi_{p\vec{d}}\,(y;\,q,t )
 -\frac{1}{p} \Phi_{\vec{d}}\, ( y^{p};q^{p},t^{p})
 \Big)
\geq 0 \,.
\]
\end{lemma}

For any $p|\vec{d}$, by (\ref{eqn: integrality eqn}),
\begin{align*}
    T_{\vec{d}}
&= q^{|\vec{d}|} \sum_{k|\vec{d}} \frac{\mu(k)}{k}
    \sum_{
      \mathfrak{A}\in\mathcal{P(P}^L),\,
      \parallel\mathfrak{A}\parallel =\vec{d}/k
     }
    \theta_{\mathfrak{A}} W_{\mathfrak{A}}(q^{k},t^{k})
    s_{\mathfrak{A}}(z^{k})
\\
&= q^{|\vec{d}|}
    \bigg(
      \sum_{k|\vec{d},\, p\nmid k}
    \frac{\mu(k)}{k} \Phi_{\vec{d}/k}(y^{n};\,q^{n},t^{n})
      +\sum_{k|\vec{d},\, p\nmid k}
    \frac{\mu(pk)}{pk} \Phi_{\vec{d}/(pk)}(y^{pk};\,q^{pk},t^{pk})
    \bigg)
\\
&= q^{|\vec{d}|}
    \sum_{k|\vec{d},\, p\nmid k} \frac{\mu(k)}{k}
    \Big(
        \Phi_{\vec{d}/k}(y^{k};\,q^{k},t^{k})
    -\frac{1}{p}\Phi _{\vec{d}/(pk)}(y^{pk};\,q^{pk},t^{pk})
    \Big) \,.
\end{align*}
By Lemma \ref{lemma: multi-cover contribution},
\begin{equation}\label{ineq: Ord_p T_d geq 0}
 \Ord_{p}T_{\vec{d}}\geq 0\,.
\end{equation}
By the arbitrary choice of $p$, we prove that
$T_{\vec{d}}\in\mathcal{M}(y;\,q,t)$,
hence
\[
 T_{\vec{d}}\in \mathfrak{R}(y;v,t)\,.
\]
The proof of Proposition \ref{prop: T_d in L} is completed.
\end{proof}


\subsection{Integrality}

    By \eqref{eqn: P_B of F_mu} and Propositions \ref{prop: pole structure of tilde_F_mu},
    (note that $\phi_{\vec{\mu}/d}(q^d,t^d) = \phi_{\vec{\mu}}(q,t)$.)
    \begin{align*}
        P_{\vec{B}}(q,t)
            &=\sum_{\vec{\mu}} \frac{\chi_{\vec{B}}(\vec{\mu})}{\phi_{\vec{\mu}}(q)}
                \sum_{d|\vec{\mu}} \frac{\pi (d)}{d}F_{\vec{\mu}/d}(q^{d},t^{d}) \\
            &=\sum_{ \vec{\mu} } \chi_{\vec{B}} ( \vec{\mu} ) \sum_{d| \vec{\mu} }
                \frac{ \mu(d) }{d} \widetilde{F}_{\vec{\mu}/d} (q^d, t^d) \\
            &=\sum_{ \vec{\mu} } \chi_{\vec{B}} ( \vec{\mu} ) \sum_{d| D_{\vec{\mu}} }\frac{ \mu(d) }{d}
                \frac{ H_{\vec{\mu}/D_{\vec{\mu}} }(t^{D_{\vec{\mu}}}) }{ [D_{\vec{\mu}}]^2 } + \textrm{polynomial} \\
            &=\sum_{ \vec{\mu} } \chi_{\vec{B}} ( \vec{\mu} ) \delta_{1, D_{\vec{\mu}}}
                \frac{ H_{\vec{\mu}/D_{\vec{\mu}} }(t^{D_{\vec{\mu}}}) }{ [D_{\vec{\mu}}]^2 } + \textrm{polynomial}\,,
    \end{align*}
    where $\delta_{1,n}$ equals $1$ if $n=1$ and $0$ otherwise.
    It implies that $P_{\vec{B}}$ is a rational function which only has pole at $q=1$.
    In the above computation, we used a fact of M\"obius inversion,
    \begin{align*}
        \sum_{d|n} \frac{ \mu(d) }{d} = \delta_{1,n}\,.
    \end{align*}

Therefore, for each $\vec{B}$,
\[
 \sum_{g= 0}^\infty \sum_{Q\in \mathbb{Z}/2}
    N_{\vec{B};\,g,Q}(q^{-1/2}-q^{1/2})^{2g}t^Q
\in \mathbb{Q}[(q^{-1/2}-q^{1/2})^2, t^{\pm \frac{1}{2}}]\,.
\]

On the other hand, by Proposition \ref{prop: T_d in L},
$T_{\vec{d}}\in \mathfrak{R}(y;\,v,t)$ and
$\Ord_p T_{\vec{d}} \geq0$ for any
prime number $p$. We have
\[
 \sum_{|\vec{B}|=\vec{d}} N_{\vec{B};\,g,Q} s_{\vec{B}} (y)
\in \Omega(y) \,.
\]
This implies $N_{\vec{B};\,g,Q}\in\mathbb{Z}$.

Combine the above discussions, we have
\[
    \sum_{g= 0}^\infty \sum_{Q\in \mathbb{Z}/2}
    N_{\vec{B};\,g,Q}(q^{-1/2}-q^{1/2})^{2g}t^{Q}
\in \mathbb{Z}[(q^{-1/2}-q^{1/2})^{2},t^{\pm 1/2}] \, .
\]
The proof of Theorem \ref{thm: integrality} is completed.

\section{Concluding Remarks and Future Research}

In this section, we briefly discuss some interesting problems related to string
duality which may be approached through the techniques developed in this paper.

\subsection{Duality from a mathematical point of view}

Let
$
 \mathbf{p}
=(\mathbf{p}^1, \ldots, \mathbf{p}^L),
$
where
$
 \mathbf{p}^\alpha
=(p^\alpha_1,p^\alpha_2, \ldots,) \,.
$
Defined the following generating series of open Gromov-Witten invariants
\[
 F_{g,\vec{\mu}}(t,\tau)
=\sum_{\beta} K^\beta_{g,\vec{\mu}}(\tau) e^{\int_\beta \omega}
\]
where $\omega$ is the K\"ahler class of the resolved conifold,
$\tau$ is the framing parameter and
\begin{align*}
 t=e^{\int_{\mathbb{P}^1} \omega}, \quad \textrm{and} \quad
    e^{\int_\beta \omega} = t^Q\,.
\end{align*}
Consider the following generating function
\[
 F(\mathbf{p};\,u,t;\,\tau)
=\sum_{g=0}^\infty \sum_{\vec{\mu}} u^{2g-2+\ell(\vec{\mu})}
    F_{g,\vec{\mu}}(t;\, \tau)
    \prod_{\alpha=1}^L p^\alpha_{\mu^\alpha} \,.
\]
It satisfies the log cut-and-join equation
\[
 \frac{\partial F(\mathbf{p};\,u,t;\,\tau)}{\partial \tau}
=\frac{u}{2} \sum_{\alpha=1}^L \mathfrak{L}_\alpha
    F(\mathbf{p};\,u,t;\,\tau)\,.
\]
Therefore, duality between Chern-Simons theory and open Gromov-Witten theory
reduces to verifying the uniqueness of the solution of cut-and-join equation.

Cut-and-join equation for Gromov-Witten side comes from the degeneracy and
gluing procedure while uniqueness of cut-and-join system should in principle be
obtained from the verification at some initial value. However, it seems very
difficult to find a suitable initial value. For example, in the case of
topological vertex theory, cut-and-join system has singularities when the
framing parameter takes value at $0$, $-1$, $\infty$ while these points are the
possible ones to evaluate at. One solution might be through studying
Riemann-Hilbert problem on controlling the monodromy at three singularity
points. When the case goes beyond, the situation will be even more complicated.
A universal method of handling uniqueness is required for the final proof of
Chern-Simons/topological string duality conjecture.

A new hope might be found in our development of cut-and-join analysis. In the
log cut-and-join equation \eqref{eqn: non-linear cut-and-join}, the non-linear
terms reveals the important recursion structure. For the uniqueness of
cut-and-join equation, it will appear as the vanishing of all non-linear terms.
We will put this in our future research.

\subsection{Other related problems}

There are many other problems related to our work on LMOV conjecture. We
briefly list some problems that we are working on.

Volume conjecture was proposed by Kashaev in \cite{Kashaev} and
reformulated by \cite{Murakami-Murakami}. It relates the volume of
hyperbolic 3-manifolds to the limits of quantum invariants. This
conjecture was later generalized to complex case \cite{MMOTY} and to
incomplete hyperbolic structures \cite{Gukov}. The study of this
conjecture is still staying at a rather primitive stage
\cite{Murakami-Yokota,Kashaev-Tirkkonen, Zheng,Garoufalidis-Le}.

LMOV conjecture has shed new light on volume conjecture. The cut-and-join
analysis we developed in this paper combined with rank-level duality in
Chern-Simons theory seems to provide a new way to prove the existence of the
limits of quantum invariants.

There are also other open problems related to LMOV conjecture. For example,
quantum group invariants satisfy skein relation which must have some
implications on topological string side as mentioned in \cite{LM}. One could
also rephrase a lot of unanswered questions in knot theory in terms of open
Gromov-Witten theory. We hope that the relation between knot theory and open
Gromov-Witten theory will be explored much more in detail in the future. This
will definitely open many new avenues for future research.

\pagebreak

\appendix

\section{Appendix}

\subsection{}
\label{appendix: T_vd calculation}

Here we carry out the calculation in Section
\ref{subsec: Integrality - ring structure}.
\begin{align*}
 T_{\vec{d}}
&= \sum_{| \vec{B}| =\vec{d}}s_{\vec{B}}(y)
    P_{\vec{B}}(q,t)
\\
&= \sum_{| \vec{B}| =\vec{d}}\frac{s_{\vec{B}}(y)
    \chi _{%
\vec{B}}(\vec{\mu})}{\phi _{\vec{\mu}}(q)}
    \sum_{k|\vec{\mu}}\frac{\mu (k)}{k}
    F_{\vec{\mu}/k}\left( q^{k},t^{k}\right)
\\
&= \sum_{k|\vec{\mu}\, ,|\vec{\mu}| =\vec{d}}
    \frac{\mu (k)}{k}\frac{p_{\vec{\mu}}(y)}{\phi _{\vec{\mu}}(q)}
    \sum_{\Lambda \in \mathcal{P(P}^{n}),\,|\Lambda| =\vec{\mu}/k}
    \theta_{\Lambda}Z_{\Lambda }(q^{k},t^{k})
\\
&= \sum_{k|\vec{\mu}\, ,|\vec{\mu}| =\vec{d}}
    \frac{\mu (k)}{k}
    \sum_{\parallel\Lambda \parallel =\vec{\mu}/k}
    \theta_{\Lambda}\phi^{-1}_\Lambda(q^k)p_\Lambda(y^k)
    Z_{\Lambda }(q^{k},t^{k})
\\
&= \sum_{k|\vec{\mu}\, ,|\vec{\mu}| =\vec{d}}
    \frac{\mu(k)}{k}
    \sum_{\parallel\Lambda \parallel =\vec{\mu}/k}
    \theta_{\Lambda}\phi^{-1}_\Lambda(q^k)p_\Lambda(y^k)
    \prod_{\beta=1}^{\ell(\Lambda)} \sum_{\vec{\mathfrak{A}}_\beta}
    \frac{\chi_{\vec{\mathfrak{A}}_\beta}(\vec{\Lambda}_\beta)}
        {\mathfrak{z}_{\vec{\Lambda}_\beta}}
    W_{\vec{\mathfrak{A}}_\beta}(q^k)
\\
&= \sum_{k|\vec{d}} \frac{\mu(k)}{k}
    \sum_{|\vec{\mathfrak{A}}_\beta|=|\vec{\Lambda}_\beta|=d_\beta/k }
    \frac{(-1)^{\ell(\Lambda)-1}}{\ell(\Lambda)} \cdot \fu_\Lambda
  \\
  &  \qquad \qquad \times
    \prod_{\beta=1}^{\ell(\Lambda)}
    \bigg\{
    \phi^{-1}_{\vec{\Lambda}_\beta}(q^k)
        p_{\vec{\Lambda}_\beta}(y^k)
    \sum_{\vec{\mathfrak{A}}_\beta}
    \frac{\chi_{\vec{\mathfrak{A}}_\beta}(\vec{\Lambda}_\beta)}
    {\mathfrak{z}_{\vec{\Lambda} _\beta}}
    W_{\vec{\mathfrak{A}}_\beta}(q^k,t^k) \bigg\}
\\
&= \sum_{k|\vec{\mu}\, ,|\vec{\mu}| =\vec{d}}
    \frac{\mu(k)}{k}
    \sum_{\mathfrak{A}=(\vec{\mathfrak{A}}_1,\ldots),\,
          \parallel\mathfrak{A}\parallel=\vec{d}/k
         }
    \frac{(-1)^{\ell(\mathfrak{A})-1}}{\ell(\mathfrak{A})}
        \cdot \fu_{\mathfrak{A}}
  \\
  & \qquad \qquad \times
    \prod_{\beta=1}^{\ell(\mathfrak{A})}
    \bigg\{
    W_{\vec{\mathfrak{A}}_\beta}(q^k,t^k)
    \sum_{\vec{\Lambda}_\beta}
    \frac{\chi_{\vec{\mathfrak{A}}_\beta}(\vec{\Lambda}_\beta)}
        {\mathfrak{z}_{\vec{\Lambda} _\beta}}
    \phi^{-1}_{\vec{\Lambda}_\beta}(q^k)p_{\vec{\Lambda}_\beta}(y^k)
    \bigg\}
\\
&= q^{|\vec{d}|} \sum_{k|\vec{d}} \frac{\mu(k)}{k}
    \sum_{\mathfrak{A}\in\mathcal{P}(\mathcal{P}^L),\,
    \parallel\mathfrak{A}\parallel =\vec{d}/k}
    \theta_{\mathfrak{A}} W_{\mathfrak{A}}(q^k,t^k) s_{\mathfrak{A}}(z^k)
\end{align*}
where
\[
 p_n(z)
=p_n(y)\cdot p_n(x_i=q^{i-1})\, .
\]

\subsection{}

\begin{lemma}
\label{lemma: p-r factor of n=2} $p$ is prime, $r\geq 1$. Then
\[
 \binom{p^{r}a}{p^{r}b}-\binom{p^{r-1}a}{p^{r-1}b}
\equiv 0\mod(p^{2r})
\]
\end{lemma}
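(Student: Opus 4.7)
This lemma is a form of the classical Jacobsthal--Kazandzidis congruence for binomial coefficients. My plan is to use the standard factorization trick to reduce to a statement about the products $W_m=\prod_{i=0}^{m-1}\prod_{j=1}^{p-1}(pi+j)$, then apply Wolstenholme-type harmonic-sum identities, with a separate inductive argument for the small primes.

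The first step is the identity
\[
\binom{pn}{pk}=\binom{n}{k}\cdot\frac{W_n}{W_k\,W_{n-k}},
\]
obtained by separating out the multiples of $p$ in $(pn)!$, $(pk)!$, and $(p(n-k))!$. Setting $n=p^{r-1}a$ and $k=p^{r-1}b$, the lemma reduces to bounding the $p$-adic valuation of $\binom{n}{k}\bigl(W_n/(W_k\,W_{n-k})-1\bigr)$ from below by $2r$.

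Next I would Taylor-expand $f(x):=\prod_{j=1}^{p-1}(x+j)$ at $x=0$: one has $f(0)=(p-1)!$, $f'(0)=(p-1)!\,H_{p-1}$ with $H_{p-1}=\sum_{j=1}^{p-1}1/j$, and higher derivatives expressed via generalized harmonic sums. For $p\geq 5$, Wolstenholme's theorem gives $H_{p-1}\equiv 0\pmod{p^2}$ and $H_{p-1}^{(2)}\equiv 0\pmod p$, so that $f(pi)\equiv(p-1)!\pmod{p^3}$; tracking the Taylor remainder to the next order shows that each factor $f(pi)$ acquires an additional gain of $\nu_p(i)$, which aggregates to the full Jacobsthal--Kazandzidis bound
\[
\binom{pn}{pk}-\binom{n}{k}\equiv 0\pmod{p^{3+\nu_p(nk(n-k))}}.
\]
Since $\nu_p(n)$, $\nu_p(k)$, $\nu_p(n-k)\geq r-1$ in our setting, this yields vanishing modulo $p^{3r}$, well beyond the required $p^{2r}$.

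The main obstacle is the small primes $p=2$ and $p=3$, where Wolstenholme fails. For these I would induct on $r$. The base case $r=1$ is Babbage's congruence $\binom{pa}{pb}\equiv\binom{a}{b}\pmod{p^2}$, which holds for every prime. For the inductive step, I would re-examine the expansion $f(pi)=(p-1)!+pi\cdot(p-1)!\,H_{p-1}+\cdots$: even without the Wolstenholme strengthening, when $p^{r-1}\mid i$ the linear correction is already divisible by $p^r$, and aggregating these gains across $W_n$, $W_k$, and $W_{n-k}$ yields an improvement of $p^{2(r-1)}$ beyond the $p^2$ base, supplying exactly the $p^{2r}$ we need.
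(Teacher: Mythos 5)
Your proposal has genuine gaps in both of its branches, and in each case the missing piece is precisely the harmonic-sum divisibility that constitutes the actual content of the paper's proof. For $p\ge 5$ you invoke the Jacobsthal--Kazandzidis congruence, but the derivation you sketch does not prove it: the factor-by-factor observation that $f(pi)\equiv (p-1)!\pmod{p^{\,3+\nu_p(i)}}$ only yields $W_n\equiv ((p-1)!)^n\pmod{p^3}$ when multiplied out, because most indices $i$ satisfy $\nu_p(i)=0$; the extra exponent $\nu_p\big(nk(n-k)\big)$ in Jacobsthal--Kazandzidis does not come from individual factors but from cancellations among the sums over $i$ in $W_n$, $W_k$, $W_{n-k}$ (e.g.\ $\binom{n}{2}-\binom{k}{2}-\binom{n-k}{2}=k(n-k)$), which is real work you have not done. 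So either you cite the full theorem as a black box (overkill for this elementary lemma, but logically admissible) or the $p\ge5$ case is incomplete: for $r\ge 2$ your own expansion delivers only $p^3$, not $p^{2r}$.

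The $p=2,3$ branch is where the argument genuinely fails. The inductive mechanism you propose --- ``when $p^{r-1}\mid i$ the linear correction is already divisible by $p^r$'' --- applies only to the sparse set of indices $i$ divisible by $p^{r-1}$; the overwhelming majority of factors in $W_n$ have $i$ coprime to $p$ and contribute a linear correction divisible by $p$ only, so there is no aggregation to an extra $p^{2(r-1)}$. The correct way to extract the uniform gain is to write
\begin{equation*}
\frac{W_n}{W_k W_{n-k}}=\prod_{i=0}^{n-k-1}\prod_{j=1}^{p-1}\Big(1+\frac{pk}{pi+j}\Big)
\equiv 1+p^r b\sum_{\substack{1\le m\le p^r(a-b)\\ \gcd(m,p)=1}}\frac{1}{m} \pmod{p^{2r}},
\end{equation*}
using $pk=p^rb$, after which everything hinges on showing that the truncated harmonic sum $\sum_{p\nmid m,\ m\le p^r c}1/m$ has $p$-adic valuation at least $r$. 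You never identify or prove this, and it is exactly the lemma's crux: the paper establishes it by replacing each $1/m$ with its inverse modulo $p^r$, so that the sum is congruent to $c\sum_{m<p^r,\ p\nmid m}m=c\,p^{2r-1}(p-1)/2 \pmod{p^r}$. (Even then the case $p=2$, $r=1$ needs a separate parity argument on $b(a-b)\binom{a}{b}$, a point both your base case and the paper's proof pass over quickly.) In short, the $W$-product reformulation is fine and your base case via Babbage is correct, but the inductive step for small primes is not a proof, and for $p\ge5$ you are resting on an uncited hard theorem rather than the elementary two-line telescoping-plus-Wolstenholme argument the paper uses.
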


\begin{proof}
Consider the ratio
\begin{align*}
    \frac{\binom{p^{r}a}{p^{r}b}}{\binom{p^{r-1}a}{p^{r-1}b}}
&= \frac{\prod_{k=1}^{p^{r}b}\frac{(a-b)p^{r}+k}{k}}
    {\prod_{k=1}^{p^{r-1}b}\frac{(a-b)p^{r-1}+k}{k}}
\\
&= \prod_{ \substack{\gcd (k,p)=1, \\ 1\leq k\leq p^{r}b } }
    \frac{(a-b)p^{r}+k}{k}
\\
&\equiv 1+p^{r}(a-b)
    \sum_{ \substack{\gcd (k,p)=1, \\ 1\leq k\leq p^{r}b }}
    \frac{1}{k}\ \mod(p^{2r}) \,.
\end{align*}
Let
\[
A_{p}(n)=\sum_{\substack{1\leq k\leq n,\\ \gcd (k,p)=1} }\frac{1}{k} \,.
\]
Therefore, the proof of the lemma can be completed by showing
\[
 A_{p}(p^{r}b)=\frac{p^{r}c}{d}
\]
for some $c,d$ such that $\gcd(d,p)=1$.

If $\gcd(k, p)=1$, there exist $\alpha_k$, $\beta_k$ such that
\[
 \alpha_k k + \beta_k p^r =1 \,.
\]
Let
\[
 B_p(n)=\sum_{1\leq k \leq n, \gcd(k, p)=1}k\,.
\]
By the above formula,
\begin{align*}
  A_p(p^rb)
&\equiv b A_p(p^r) \mod(p^r)
\\
&\equiv b B_p(p^r) \mod(p^r) \,,
\end{align*}
and
\begin{align*}
    B_p(p^r)
&= \sum_{k=1}^{p^r}k-p\sum_{k=1}^{p^{r-1}}k
\\
&= \frac{p^r(p^r+1)}{2}-p\frac{p^{r-1}(p^{r-1}+1)}{2}
\\
&= \frac{p^{2r-1}(p-1)}{2} \,.
\end{align*}
Here, we have $2r-1\geq r$ since $r\geq 1$.
The proof is then completed.
\end{proof}

The following theorem is a simple generalization of the above Lemma.
\begin{theorem}\label{theorem: number theory}
$\sum_{i=1}^{n}a_{i}=a$, $p$ is prime,
$r\geq 1$, then
\[
  \binom{p^{r}a}{p^{r}a_{1},\cdots ,p^{r}a_{n}}
 -\binom{p^{r}a}{p^{r-1}a_{1},\cdots ,p^{r-1}a_{n}}
\equiv 0\mod(p^{2r})
\]
\end{theorem}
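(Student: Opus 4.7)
The statement as written appears to contain a typo: the upper entry of the second multinomial should be $p^{r-1}a$ rather than $p^r a$, so that the bottom entries actually sum to the top. This matches both the preceding Lemma (case $n=2$) and the way the displayed formula is used in the main text, where one sees $\binom{p^{r+1}s}{p^{r+1}s_1,\cdots,p^{r+1}s_k}-\binom{p^rs}{p^rs_1,\cdots,p^rs_k}$. I will proceed on the assumption that the intended statement is
\[
  \binom{p^{r}a}{p^{r}a_{1},\cdots,p^{r}a_{n}}
  -\binom{p^{r-1}a}{p^{r-1}a_{1},\cdots,p^{r-1}a_{n}}
  \equiv 0 \pmod{p^{2r}}.
\]

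The plan is a straightforward induction on $n$, reducing the multinomial statement to the binomial statement already established in the preceding Lemma. The base case $n=2$ is precisely that Lemma, since $\binom{p^r a}{p^r a_1, p^r a_2} = \binom{p^r a}{p^r a_1}$ when $a_1+a_2=a$. For the inductive step, I would use the standard factorization
\[
  \binom{p^r a}{p^r a_1,\ldots,p^r a_n}
  = \binom{p^r a}{p^r a_1}\cdot
    \binom{p^r(a-a_1)}{p^r a_2,\ldots,p^r a_n},
\]
and the analogous one at level $r-1$. By the Lemma, the first factor is congruent to $\binom{p^{r-1}a}{p^{r-1}a_1}$ modulo $p^{2r}$; by the inductive hypothesis applied to $a-a_1 = a_2+\cdots+a_n$, the second factor is congruent to $\binom{p^{r-1}(a-a_1)}{p^{r-1}a_2,\ldots,p^{r-1}a_n}$ modulo $p^{2r}$.

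The elementary closing observation is that if $A\equiv A' \pmod{p^{2r}}$ and $B\equiv B' \pmod{p^{2r}}$ with all four quantities integral, then $AB - A'B' = A(B-B') + (A-A')B' \equiv 0 \pmod{p^{2r}}$; multiplying the two congruences above and using this fact yields the desired congruence at level $n$. There is essentially no obstacle: the only thing one has to be careful about is that the multinomials at level $r$ and $r-1$ are compared via the \emph{same} peeling of the first factor, so that the reassembled product at level $r-1$ is really $\binom{p^{r-1}a}{p^{r-1}a_1,\ldots,p^{r-1}a_n}$. Once the induction is set up cleanly the theorem follows with no further estimates beyond what the Lemma already provides.
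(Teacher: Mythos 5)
Your proposal is correct and follows essentially the same route as the paper: the paper likewise reads the statement with the intended top entry $p^{r-1}a$ in the second multinomial, factors each multinomial coefficient as the telescoping product $\prod_{k=1}^{n}\binom{p^{r}(a-\sum_{i<k}a_{i})}{p^{r}a_{k}}$ (respectively at level $r-1$), and applies Lemma \ref{lemma: p-r factor of n=2} factor by factor. Your version merely packages this as an induction on $n$ peeling off one binomial at a time, which is the same decomposition.
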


\begin{proof}
We have
\begin{align*}
& \binom{p^{r}a}{p^{r}a_{1},\cdots ,p^{r}a_{n}}
    -\binom{p^{r}a}{p^{r-1}a_{1},\cdots ,p^{r-1}a_{n}}
\\
& \qquad \qquad \qquad
   =\prod_{k=1}^{n}\binom{p^{r}(a-\sum_{i=1}^{k-1}a_{i})}{p^{r}a_{k}}
    -\prod_{k=1}^{n}\binom{p^{r-1}(a-\sum_{i=1}^{k-1}a_{i})}{p^{r-1}a_{k}}
  \qquad
\\
& \qquad \qquad \qquad \equiv 0\mod(p^{2r}) \,.
\end{align*}
In the last step, we used the Lemma \ref{lemma: p-r factor of n=2}. The proof is
completed.
\end{proof}

\begin{lemma}\label{lemma:a devide combination (a,b)}
We have
\[
 \frac{a}{\gcd (a,b)} \left| \binom{a}{b}\right. \,.
\]
\end{lemma}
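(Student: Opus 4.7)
The plan is to reduce the claim to the well-known Pascal-type identity
\[
 b \binom{a}{b} = a \binom{a-1}{b-1},
\]
which is immediate from expanding both sides in factorials. This identity says that $\frac{b}{a}\binom{a}{b}$ is an integer, namely $\binom{a-1}{b-1}$. The task is to boost this ``$b$ divides $a\binom{a-1}{b-1}$'' statement to a divisibility by the full factor $a/\gcd(a,b)$.

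To do this I would invoke B\'ezout's identity. Set $d = \gcd(a,b)$ and choose integers $x, y$ with $xa + yb = d$. Dividing by $a$ and multiplying by $\binom{a}{b}$ gives
\[
 \frac{d}{a}\binom{a}{b}
 = x\binom{a}{b} + y \cdot \frac{b}{a}\binom{a}{b}
 = x\binom{a}{b} + y\binom{a-1}{b-1}.
\]
The right-hand side is a $\mathbb{Z}$-linear combination of two binomial coefficients, hence an integer. Therefore $\frac{d}{a}\binom{a}{b} \in \mathbb{Z}$, which is exactly the assertion $\frac{a}{\gcd(a,b)} \mid \binom{a}{b}$.

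There is essentially no obstacle: the only nontrivial input is the factorial identity $b\binom{a}{b} = a\binom{a-1}{b-1}$, and the rest is a one-line B\'ezout manipulation. One small edge case worth mentioning is $b = 0$ (or $b = a$), where $\binom{a}{b} = 1$ and $\gcd(a,b) = a$, so $a/\gcd(a,b) = 1$ divides $1$ trivially; the argument above also covers this since $\binom{a-1}{-1}$ never appears when $b = 0$ (one can simply observe $d \mid a$ directly).
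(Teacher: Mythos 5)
Your proof is correct and rests on the same key identity as the paper's, namely $b\binom{a}{b}=a\binom{a-1}{b-1}$. The only difference is in the last step: you conclude via a B\'ezout combination $xa+yb=\gcd(a,b)$, while the paper divides the identity by $\gcd(a,b)$ and invokes the coprimality of $a/\gcd(a,b)$ and $b/\gcd(a,b)$ --- two equivalent ways of drawing the same conclusion.
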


\begin{proof}
Notice that
\[
\binom{a}{b}=\frac{a}{b}\binom{a-1}{b-1}\, ,
\]
i.e.
\[
 \frac{b}{\gcd (a,b)}\binom{a}{b}=\frac{a}{\gcd (a,b)}\binom{a-1}{b-1}\, .
\]
However,
\[
\gcd \left( \frac{a}{\gcd (a,b)},\frac{b}{\gcd (a,b)}\right) =1\, ,
\]
so
\[
 \frac{a}{\gcd (a,b)}\left|\binom{a}{b}\right.\, .
\]
\end{proof}

This direct leads to the following corollary.

\begin{corollary}
\label{corollary:a devide (a: a-1, ..., a-n)} $a=a_{1}+a_{2}+\cdots +a_{n}$.
Then
\[
\left.\frac{a}{\gcd (a_{1},a_{2},\cdots ,a_{n})}
\right| \binom{a}{a_{1},\,a_{2},\cdots ,\,a_{n}}\, .
\]
\end{corollary}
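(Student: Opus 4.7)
The plan is to derive the multinomial divisibility from the binomial case (Lemma \ref{lemma:a devide combination (a,b)}) by slicing off one part at a time, and then to aggregate the resulting divisibilities through a short $p$-adic argument. Concretely, for each index $i$ I would factor
\[
 \binom{a}{a_1,\ldots,a_n}
=\binom{a}{a_i}\binom{a-a_i}{a_1,\ldots,\widehat{a_i},\ldots,a_n}
\]
and apply the lemma to the first factor to conclude $\frac{a}{\gcd(a,a_i)}\bigm|\binom{a}{a_1,\ldots,a_n}$ for every $i\in\{1,\ldots,n\}$.

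Next, set $d=\gcd(a_1,\ldots,a_n)$ and fix a prime $p$. Let $i^\ast$ be an index where the $p$-adic valuation $v_p(a_{i^\ast})=\min_j v_p(a_j)=v_p(d)$ is attained. Because $a=\sum_j a_j$, we have $v_p(a)\geq v_p(a_{i^\ast})$, which yields $v_p(\gcd(a,a_{i^\ast}))=v_p(a_{i^\ast})=v_p(d)$ and hence
\[
 v_p\!\Bigl(\tfrac{a}{\gcd(a,a_{i^\ast})}\Bigr)
=v_p(a)-v_p(d)
=v_p(a/d).
\]
Combined with the divisibility from the first paragraph applied to the index $i^\ast$, this gives $v_p(a/d)\leq v_p\bigl(\binom{a}{a_1,\ldots,a_n}\bigr)$. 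Since $p$ is arbitrary, $\tfrac{a}{d}\bigm|\binom{a}{a_1,\ldots,a_n}$, which is the claim.

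The step that carries the content is the lemma, whose divisibility only ever mentions $\gcd(a,a_i)$ for a single $a_i$ at a time; the apparent obstacle is that no individual $i$ makes $\gcd(a,a_i)$ equal to $d$ in general, so one cannot read off the divisor $a/d$ from any one factorization. The observation that resolves this is that one may choose $i$ prime by prime: only the index $i^\ast$ achieving the minimum valuation matters at each $p$, and for that choice $\gcd(a,a_{i^\ast})$ and $d$ have the same $p$-part. This localizing trick is the only nontrivial ingredient. An alternative, more bare-hands approach would induct on $n$ using $\binom{a}{a_1,\ldots,a_n}=\binom{a}{a_n}\binom{a-a_n}{a_1,\ldots,a_{n-1}}$, but keeping track of how $\gcd(a-a_n,a_1,\ldots,a_{n-1})$ relates to $\gcd(a_1,\ldots,a_n)$ and $\gcd(a,a_n)$ is messier than the prime-by-prime reduction above.
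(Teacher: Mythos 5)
Your proof is correct. The factorization $\binom{a}{a_1,\ldots,a_n}=\binom{a}{a_i}\binom{a-a_i}{a_1,\ldots,\widehat{a_i},\ldots,a_n}$ is valid, and the prime-by-prime choice of an index $i^\ast$ minimizing $\Ord_p(a_j)$ correctly forces $\Ord_p\big(a/\gcd(a,a_{i^\ast})\big)=\Ord_p(a/d)$ with $d=\gcd(a_1,\ldots,a_n)$, so applying Lemma \ref{lemma:a devide combination (a,b)} to the factor $\binom{a}{a_{i^\ast}}$ gives the required valuation bound at each prime. The paper itself writes no argument --- it calls the corollary a direct consequence of Lemma \ref{lemma:a devide combination (a,b)} --- and the intended derivation is evidently the $n$-part analogue of that lemma's own one-line proof: the identity $a_i\binom{a}{a_1,\ldots,a_n}=a\binom{a-1}{a_1,\ldots,a_i-1,\ldots,a_n}$ shows $a\mid a_i\binom{a}{a_1,\ldots,a_n}$ for every $i$, hence $a$ divides the gcd of these quantities, which equals $\gcd(a_1,\ldots,a_n)\cdot\binom{a}{a_1,\ldots,a_n}$; since $d\mid a$, dividing through by $d$ yields $\frac{a}{d}\mid\binom{a}{a_1,\ldots,a_n}$ with no localization at all. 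Your route instead uses the lemma only as a black box on a single binomial factor, which is why the $p$-adic aggregation becomes necessary: no single $i$ makes $\gcd(a,a_i)$ equal to $d$, but for each prime some $i$ matches the $p$-part, and you exploit exactly that. The trade-off is that your argument is a bit longer but needs nothing beyond the standard splitting of the multinomial coefficient, whereas the paper's implicit argument is shorter but rests on rerunning the lemma's proof idea rather than quoting its statement. Both are complete and correct.
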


\begin{lemma}
 $a,r\in \mathbb{N}$, $p$ is a prime number, then
\[
 a^{p^r}-a^{p^{r-1}} \equiv 0 \mod(p^r)\,.
\]
\end{lemma}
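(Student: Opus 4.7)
The plan is to proceed by induction on $r$, with Fermat's Little Theorem providing the base case and a standard ``lifting the exponent'' lemma driving the inductive step. The base case $r=1$ is simply $a^p \equiv a \pmod{p}$, which requires no further comment.

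The inductive engine I would isolate as a separate lemma is the following: if $x \equiv y \pmod{p^s}$ with $s \geq 1$, then $x^p \equiv y^p \pmod{p^{s+1}}$. To prove it, write $x = y + p^s t$ and expand
\[
x^p = \sum_{j=0}^{p} \binom{p}{j}\, y^{p-j}\, (p^s t)^j.
\]
The $j=0$ term is $y^p$ itself. For $1 \leq j \leq p-1$, the factor $p \mid \binom{p}{j}$ supplies one $p$ and $(p^s t)^j$ supplies another $p^{js}$, so the $p$-adic valuation of the term is at least $1 + js \geq 1 + s$. For $j=p$, the binomial coefficient is $1$ and the contribution is $(p^s t)^p$, of valuation $ps \geq s+1$ since $s \geq 1$. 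Summing, $x^p - y^p \equiv 0 \pmod{p^{s+1}}$.

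With this lemma in hand, the inductive step is immediate: assuming $a^{p^{r-1}} \equiv a^{p^{r-2}} \pmod{p^{r-1}}$ for some $r \geq 2$, apply the lifting lemma with $s = r-1$ to the pair $x = a^{p^{r-1}}$, $y = a^{p^{r-2}}$ to conclude
\[
a^{p^r} = x^p \equiv y^p = a^{p^{r-1}} \pmod{p^r}.
\]
This closes the induction.

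The only delicate point is the $j=p$ term in the lifting lemma, where $\binom{p}{p}=1$ contributes no additional factor of $p$; this is exactly why the hypothesis $s \geq 1$ is needed, and why the base case $r=1$ has to be handled separately via Fermat. Everything else is bookkeeping. (An alternative route would factor $a^{p^r}-a^{p^{r-1}} = a^{p^{r-1}}\bigl((a^{p-1})^{p^{r-1}}-1\bigr)$ and treat the cases $p \mid a$ and $\gcd(a,p)=1$ separately, but the induction above is cleaner and avoids splitting into cases.)
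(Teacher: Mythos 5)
Your proof is correct, but it follows a genuinely different route from the paper's. The paper argues directly (no induction on $r$): it splits into the cases $p \mid a$, where $a^{p^{r-1}}$ alone already carries $p$-adic valuation $p^{r-1} \geq r$, and $\gcd(a,p)=1$, where it writes $a^{p-1} = kp+1$ by Fermat, factors $a^{p^r}-a^{p^{r-1}} = a^{p^{r-1}}\bigl((kp+1)^{p^{r-1}}-1\bigr)$, expands by the binomial theorem, and bounds the valuation of each term $(kp)^i\binom{p^{r-1}}{i}$ using the auxiliary divisibility fact $\frac{n}{\gcd(n,i)} \mid \binom{n}{i}$ (Lemma \ref{lemma:a devide combination (a,b)}). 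This is exactly the ``alternative route'' you mention and discard in your closing parenthesis. Your induction on $r$, driven by the lifting lemma ($x \equiv y \pmod{p^s}$ with $s\geq 1$ implies $x^p \equiv y^p \pmod{p^{s+1}}$), buys two things: it avoids the case split on whether $p$ divides $a$, and it needs only the elementary fact $p \mid \binom{p}{j}$ for $0<j<p$ rather than the finer valuation estimate on $\binom{p^{r-1}}{i}$. The paper's version, on the other hand, is self-contained in one pass and reuses machinery (the binomial-coefficient divisibility lemma) that it has already set up for Theorem \ref{theorem: number theory}, so within the structure of the appendix it costs nothing extra. Both arguments are complete; your treatment of the delicate $j=p$ term, where $\binom{p}{p}=1$ and the valuation $ps \geq s+1$ relies on $s \geq 1$, is exactly right and is the point most often fumbled in this induction.
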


\begin{proof}
 If $a$ is $p$, since $p^{r-1}\geq r$, the claim is true. If $\gcd(a,p)=1$, by
Fermat theorem, $a^{p-1}\equiv 1\mod(p)$. We have
\begin{align*}
  a^{p^r}-a^{p^{r-1}}
&= a^{p^{r-1}} \Big( (kp+1)^{p^{r-1}(p-1)}-1 \Big)
\\
&= a^{p^{r-1}} \sum_{i=1}^{p^{r-1}} (kp)^i \binom{p^{r-1}}{i}
\\
&\equiv 0 \mod (p^r)\,.
\end{align*}
Here in the last step, we used Lemma \ref{lemma:a devide combination (a,b)}.
\end{proof}

A direct consequence of the above lemma is the following theorem.

\begin{theorem}\label{thm: Fermat theorem}
 Given $f(y;\,q,t) \in \mathcal{M}(y;\,q,t)$, we have
\[
 \Ord_p \Big( f(y;\,q,t)^{p^{r+1}}
 - f(y^p;\,q^p,t^p)^{p^r} \Big)
\geq r+1 \,.
\]
\end{theorem}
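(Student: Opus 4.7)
The plan is to prove $\Ord_p\bigl(f^{p^{r+1}} - f(y^p;q^p,t^p)^{p^r}\bigr) \geq r+1$ by induction on $r$, with the heavy lifting concentrated in the base case $r=0$. For the inductive step I would invoke the classical \emph{lifting-the-exponent} principle: if $A,B$ are elements of a $\mathbb{Z}_{(p)}$-algebra with $\Ord_p(A-B) \geq k \geq 1$, then $\Ord_p(A^p - B^p) \geq k+1$. (Writing $A = B + p^k C$ and expanding $A^p$ by the binomial theorem, the linear correction contributes $p\cdot p^k B^{p-1}C$, while each term $\binom{p}{i}(p^k C)^i$ with $i \geq 2$ has $p$-adic valuation at least $ki+1 \geq 2k+1 \geq k+2$.) Applied with $A = f^{p^{r+1}}$ and $B = f(y^p;q^p,t^p)^{p^r}$, this advances the claim by one step, so everything reduces to verifying the base case.

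For the base case I would write $f = g/b$ with $g \in \Omega(y)[q^{\pm 1/2}, t^{\pm 1/2}]$ and $b = \prod_k [n_k]_q^2$, and establish two Frobenius-style congruences modulo $p$:
\[
g(y;q,t)^p \equiv g(y^p;q^p,t^p), \qquad b^p \equiv b(v^{(p)}),
\]
where $v^{(p)} := [1]_{q^p}^2$. Both rest on the freshman's dream in characteristic $p$: viewing $g$ as a symmetric polynomial in the $y_i$'s with coefficients in $\mathbb{Z}[q^{\pm 1/2},t^{\pm 1/2}]$, the Frobenius endomorphism $x \mapsto x^p$ on the $\mathbb{F}_p$-algebra sends $y_i \mapsto y_i^p$, $q^{1/2} \mapsto q^{p/2}$, $t^{1/2} \mapsto t^{p/2}$, and fixes integer coefficients by Fermat's little theorem; symmetry in the $y_i$'s is preserved under $y_i \mapsto y_i^p$, so the identity lives in $\Omega(y)$. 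For the denominator, each factor obeys $[n]_q^p = (q^{-n/2}-q^{n/2})^p \equiv q^{-np/2} - q^{np/2} = [n]_{q^p} \mod p$, and squaring and taking the product over $k$ yields the second congruence.

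To conclude the base case I would combine the two congruences through
\[
f^p - f(y^p;q^p,t^p) = \frac{\bigl(g^p - g(y^p;q^p,t^p)\bigr)\, b(v^{(p)}) + g(y^p;q^p,t^p)\,\bigl(b(v^{(p)}) - b^p\bigr)}{b^p \cdot b(v^{(p)})}.
\]
The numerator is manifestly $p$ times an element of $\Omega(y)[q^{\pm 1/2},t^{\pm 1/2}]$, while the denominator $b^p \cdot b(v^{(p)}) = \prod_k [n_k]_q^{2p} \cdot \prod_k [n_k p]_q^2$ is still a product of $[m]_q^2$'s, hence a legitimate denominator of $\mathcal{M}(y;q,t)$. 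Therefore $\Ord_p(f^p - f(y^p;q^p,t^p)) \geq 1$, and the induction closes.

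The main obstacle I anticipate is bookkeeping rather than substance: one must verify that the identity $[n]_{q^p} = [np]_q$ correctly turns $b(v^{(p)})$ into a valid element of $\mathbb{Z}[v]$, and that the primitive-representative definition of $\Ord_p$ really detects the extracted factor of $p$ after these cancellations (without a hidden denominator absorbing it). Once that is in place, the proof is simply two instances of the freshman's dream combined with one $p$-adic binomial expansion.
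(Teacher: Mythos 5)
Your proof is correct, and it is worth comparing with what the paper actually does. The paper reduces the theorem to the purely scalar congruence $a^{p^r}-a^{p^{r-1}}\equiv 0 \pmod{p^r}$, proved by writing $a^{p-1}=1+kp$ (Fermat) and expanding $(1+kp)^{p^{r-1}}-1$ binomially with the divisibility $\frac{a}{\gcd(a,b)}\mid\binom{a}{b}$; it then declares the statement for general $f\in\mathcal{M}(y;q,t)$ a ``direct consequence,'' without addressing the variables $y,q,t$ or the denominator $b(v)$ at all. Your argument uses the same arithmetic core (Fermat plus $p$-adic control of binomial coefficients) but organizes it as a clean two-step scheme --- a Frobenius base case $\Ord_p\bigl(f^p-f(y^p;q^p,t^p)\bigr)\geq 1$ followed by a lifting-the-exponent induction --- and, crucially, it actually carries out the passage from integers to the ring $\mathcal{M}$: you treat numerator and denominator separately, check that $[n]_q^p\equiv[np]_q \pmod p$ so that $b(v^{(p)})=\prod_k[n_kp]_q^2$ is again an admissible denominator, and verify that the extracted factor of $p$ survives the primitive-representative definition of $\Ord_p$. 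This fills a genuine gap that the paper glosses over. Two small caveats: your LTE step implicitly uses that $\Ord_p$ is additive on products in $\mathcal{M}$, which requires Gauss's lemma in $\Omega(y)[q^{\pm 1/2},t^{\pm 1/2}]$ (true, since this is a localization of a polynomial ring over $\mathbb{Z}$, and the paper makes the same tacit assumption elsewhere); and the Frobenius congruence for the numerator should be read in a basis of $\Omega(y)$ --- e.g.\ via $p_n(y^p)=p_{np}(y)$ --- to confirm that $g(y^p;q^p,t^p)$ indeed lies in $\Omega(y)[q^{\pm 1/2},t^{\pm 1/2}]$. Neither point is a real obstruction.
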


\pagebreak

\small

\vspace{5pt}

\noindent
\textsc{Center of Mathematical Sciences \\
Zhejiang University, Box 310027 \\
Hangzhou, China}

\noindent
\textsc{Department of mathematics \\
University of California at Los Angeles \\
Box 951555 \\
Los Angeles, CA 90095-1555 \\}
Email: liu@math.ucla.edu.

\vspace{10pt}

\noindent
\textsc{Department of Mathematics \\
Harvard University \\
One Oxford Street \\
Cambridge, MA, 02138 \\ }
Email: ppeng@math.harvard.edu.

\end{document}